\theoremstyle{plain}
\newtheorem{cor}{Corollary}[section]
\newtheorem{lem}{Lemma}[section]
\newtheorem{rem}{Remark}[section]
\newtheorem{thr}{Theorem}[section]
\newenvironment{pro}{\bf Proof.\rm\ }{\protect\nopagebreak$\mbox{}\hfill\Box$
\\}
\newcommand{\bea}{\begin{eqnarray*}}
\newcommand{\eea}{\end{eqnarray*}}
\newcommand{\beq}{\begin{equation}}
\newcommand{\eeq}{\end{equation}}
\newcommand{\e}{\epsilon}
\newcommand{\s}{\sigma}
\newcommand{\NN}{\mathcal{N}}
\def \R {\mathchoice {\hbox{I\kern -0.1667em R}}{\hbox{I\kern -0.1667em R}}
                      {\small{I\kern -0.1667em R}}{\small{I\kern -0.1667em R}}}
\def \eins {\mathchoice {\hbox{1\kern -0.27em l}}{\hbox{1\kern -0.27em l}}
                     {\small{1\kern -0.27em l}}{\small{1\kern -0.27em l}}}
\begin{document}

\begin{frontmatter}
\title{Asymptotic Bayes optimality under sparsity for generally distributed effect sizes under the alternative}
\runtitle{ABOS for general distribution}

\begin{aug}
\author{\fnms{Florian} \snm{Frommlet}\thanksref{1}\ead[label=e1]{Florian.Frommlet@meduniwien.ac.at}},
\author{\fnms{Arijit} \snm{Chakrabarti}\thanksref{2}\ead[label=e2]{arc@isical.ac.in}},
\author{\fnms{Magdalena} \snm{Murawska}\thanksref{3}\ead[label=e3]{m.murawska@erasmusmc.nl}}
\and
\author{\fnms{Ma{\l}gorzata} \snm{Bogdan}\thanksref{4}\ead[label=e4]{Malgorzata.Bogdan@pwr.wroc.pl}}

\runauthor{F. Frommlet et al.}

\affiliation{Medical University of Vienna\thanksmark{1},  Indian Statistical Institute\thanksmark{2},Erasmus University\thanksmark{3}, Wroc{\l}aw University of Technology\thanksmark{4} }

\address{
Department of Medical Statistics\\
 Medical University of Vienna\\
 Spitalgasse 23\\
A-1090 Vienna, Austria\\
\printead*{e1}}

\address{203 B.T.Road\\
Bayesian and Interdisciplinary Research Unit\\
Indian Statistical Institute \\
Kolkata 700108, West Bengal,India\\
\printead*{e2}}

\address{Department of Biostatistics\\
Erasmus University Medical Center\\
PO Box 2040, 3000 CA Rotterdam,the Netherlands\\
\printead*{e3}}

\address{Department of
Mathematics and Computer Science\\
Wroc{\l}aw University of Technology\\
ul. Janiszewskiego 14a\\
50-370 Wroclaw, Poland\\
\printead{e4}}

\end{aug}

\begin{abstract}

Recent results concerning asymptotic Bayes-optimality under
sparsity (ABOS) of  multiple testing procedures are extended to
fairly generally distributed effect sizes under the alternative. 
An asymptotic framework is considered where
both the number of tests $m$ and the sample size $n$ go to infinity, while
the fraction $p$ of true alternatives converges to zero. It is shown that under  mild restrictions on the loss function nontrivial asymptotic inference is possible only if $n$  increases to infinity at least at the rate of $\log m$. Based on this assumption precise conditions are given under which the Bonferroni correction with nominal Family Wise Error Rate (FWER) level $\alpha$ and the Benjamini-
Hochberg procedure (BH) at FDR level $\alpha$ are asymptotically optimal.
When $n \propto \log m$ then $\alpha$ can remain fixed, whereas when $n$ increases to infinity at a quicker rate, then $\alpha$ has to converge to zero roughly like $n^{-1/2}$. 
Under these conditions the Bonferroni correction is ABOS in case
of extreme sparsity ($p\propto m^{-1})$, while BH adapts well to the unknown level of
sparsity. 

In the second part of this article these optimality results are carried
over to model selection in the context of multiple regression with
orthogonal regressors. Several modifications of Bayesian Information
Criterion are considered, controlling either FWER or FDR, and conditions
are provided under which these selection criteria are ABOS.
Finally the performance of these criteria is examined in a brief simulation study.

\end{abstract}

\begin{keyword}[class=AMS]
\kwd[Primary ]{62C25,62F05}
\kwd[; secondary ]{62C10}
\end{keyword}

\begin{keyword}
\kwd{Multiple testing, Model selection, FDR, Bayes oracle, asymptotic optimality, two groups model}
\end{keyword}

\end{frontmatter}

\section{Introduction} \label{Sec:Intr}

Driven by a vast number of applications, over the last few years multiple hypothesis testing with sparse alternatives has become a topic of intensive research (see,  \cite{A}, \cite{CJ}, \cite{Djin}, \cite{Djin2}, \cite{JinCai} or \cite{MR}).  As a result of this interest many new multiple testing procedures have been proposed, which can be compared according to several different optimality criteria. In the classical context a multiple testing procedure is considered to be {\it optimal} if it maximizes the number of true discoveries, while keeping one of the type I error measures (like Family Wise Error Rate, False Dicovery Rate or the expected number of false positives) at a certain, fixed level (see, \cite{GR}, \cite{Lehmann}, \cite{Chi}, \cite{Roquain},  \cite{Pena}, \cite{Finner}, \cite{S3} or \cite{SCai}). A different notion of optimality is proposed in \cite{SB} and \cite{Bog2009}, which investigate multiple testing procedures in the context of minimizing the  Bayes risk.

 In many  applications of high-dimensional multiple testing it is assumed that the proportion $p$ of true alternative hypotheses among all tests is very small. In asymptotic analysis this is often expressed by the sparsity assumption, that $p$ decreases to 0 as the total number of tests $m$ increases to infinity.
Recently, substantial efforts have been made to understand the asymptotic properties of multiple testing  under  sparsity  (see,  \cite{Djin}, \cite{Djin2}, \cite{A}, \cite{Bog2009}).


Bogdan et al. \citep{Bog2009} consider the problem of testing hypotheses about means $\mu_i$ in normal populations $X_i \sim N(\mu_i, \sigma^2)$, $i=1,\ldots,m$. Their analysis is based on a two-groups model, which assumes that the unknown means are generated by the scale mixture of two normal distributions: null and alternative.  The classical case of testing $H_{0i}: \mu_i=0$ corresponds to the situation when the variance of the null distribution is equal to 0.   
In \citep{Bog2009} the ratio $u$ of  variances of the alternative distribution of $\mu_i$ and the null distribution of $X_i$, slowly increases to infinity as $p\rightarrow 0$, at a rate which  guarantees that the limiting power of the Bayes classifier is larger than 0 and smaller than 1. Such sequences of alternative distributions are considered to be ``on the verge of detectability''. The Bayes risk is computed assuming that losses generated by the type I and type II errors are the same for all tests, and the total loss is the sum of losses for individual tests. In case of known $p, \s^2$ and $u$ the risk is minimized by using Bayes classifiers for each individual test. This optimal rule, which is in practice unattainable, is referred to as the Bayes oracle.

Under the described asymptotic assumptions a multiple testing rule is classified as asymptotically Bayes optimal under sparsity (ABOS) if the ratio of the corresponding Bayes risk and the risk of the Bayes oracle converges to one.
 Bogdan et al. \citep{Bog2009} characterize the class of multiple testing rules with fixed threshold which are ABOS, and they provide conditions under which the Bonferroni correction and the popular Benjamini--Hochberg multiple testing procedure (BH, \citep{BH}) are asymptotically optimal.

In the first part of this paper we extend the results of \citep{Bog2009} concerned with testing $H_{0i}:\mu_i=0$ to the case when the distribution of $\mu_i$ under the alternative $\nu(\mu)$ is fixed and not necessarily normal, while the number of individuals $n$ used to calculate  the test statistics $\bar X_i=\frac{1}{n} \sum_{j=1}^n X_{ij}$ increases with $m$.  It turns out that, given $p\propto m^{-\beta}$, signals are at the verge of detectability exactly when $n \propto \log m$. This situation is notably relevant in the context of bioinformatics data, where $n$ is usually much smaller than $m$.  We show that in this case  BH and the Bonferroni correction are ABOS  under the same assumptions  as in \cite{Bog2009}.  In particular, we show that if $\nu(\mu)$ has a positive and bounded density on the real line then the Bonferroni correction at a fixed FWER $\alpha \in (0,1)$ is ABOS if $p\propto m^{-1}$ and the ratio of losses for the false positive and the false negative $\delta$ decreases to 0 at such a rate that $\log \delta=o(\log m)$.   In contrast BH at a fixed FDR level $\alpha \in (0,1)$ adapts very well to the unknown level of sparsity and is ABOS  whenever $p\propto m^{-\beta}$, $\beta \in (0,1]$. As explained in \citep{Bog2009} the assumption of decreasing $\delta$ is quite reasonable since the cost of missing a true signal usually increases when the total number of signal decreases.   We also show that if $p\propto m^{-\beta}$ with $\beta \in (0,1]$ then the step-down version of the FDR controlling procedure, SD, is ABOS under the same conditions as BH.

Unlike in \cite{Bog2009} we also consider the case where the power of the Bayes oracle converges to 1.  For $p\propto m^{-\beta}$ this relates to the case where $n$ increases to infinity at a quicker rate than $\log m$. We show that in this case BH and SD are ABOS for any $\beta \in (0,1]$ as long as FDR levels decrease to 0 approximately at the rate of $n^{-1/2}$, while $\delta$ is bounded from above  and such that $\log \delta=o(\log m)$. Similarly,  the Bonferroni  correction is ABOS if its FWER converges to zero at the rate $n^{-1/2}$ and $p \propto \frac{1}{m}$. In this case the only assumption on  $\nu$ is that it has a positive  and bounded density in a neighborhood of 0.
Extending the results of \citep{Bog2009} to a more general class of distributions is based on techniques introduced by \citep{JT}, where nontrivial modifications are required to deal with sparsity.

In the second part of the paper we use the results on multiple testing rules to prove asymptotic optimality of some model selection criteria for sparse least squares regression. Here we concentrate on the orthogonal design and study the two cases of known and unknown variance of the error term $\sigma^2$. As discussed in \citep{BGOT}, in case of  orthogonal design with known $\sigma$, penalized likelihood model selection criteria work analogously to  multiple testing procedures which verify individually the significance of each regression coefficient. Based on this analogy it is very easy to prove  that  popular model selection criteria, like AIC \cite{Ak} or BIC \cite{S}, are not consistent when $\frac{m}{\sqrt{n}}$ increases to infinity (see \citep{BGOT}). Specifically, under this scenario the expected number of false discoveries increases to infinity.

To solve this problem some modifications of  AIC   \citep{mAIC} and BIC (see, \citep{BGD, Chen1}) were recently proposed in the literature. In this article we will concentrate on modifications of BIC, which is more appropriate to consider when one aims at minimizing the misclassification rate, or in our context the Bayes risk based on a generalized 0-1 loss. The first of the considered criteria, mBIC, was derived in \citep{BGD} in a Bayesian setting using a prior on the model dimension which assumes that the expected number of true regressors does not depend on $m$.
In case of orthogonality and known $\sigma$ it was pointed out in \citep{BGOT}  that mBIC is controlling the FWER. Optimality results at a sparsity level $p\propto m^{-1}$ follow immediately from the analysis for multiple testing.

 In view of   results on multiple testing it would actually be of great interest to study model selection criteria which control the FDR.   In \cite{A}  penalized  model selection schemes are discussed which  have  exactly this property. Quite similar penalties have been discussed in \cite{FG} and \cite{GF}.  Starting from the penalty of  \cite{A} we will introduce several new modifications of BIC (mBIC1 - mBIC3), where mBIC2 has been shown already to perform very well in the application of genome wide association studies \cite{FRTB}.  In case of known $\sigma$ we prove that the FDR controlling criteria are ABOS  for a wide range of sparsity levels, satisfying for example $p=m^{-\beta}$, with $\beta \in (0,1]$.

In most applications it is much more realistic to assume that $\sigma$ is not known. Under sparsity  it is  rather difficult to get reliable estimates on $\sigma$, and for that reason optimality results on the corresponding model selection criteria under sparsity are very rare in the literature. In a Bayesian approach with normally distributed error terms, $\sigma$ is integrated out and in the corresponding version of BIC the residual sum of squares $RSS$ is replaced by $\log RSS$.
We will show that in this context mBIC is again ABOS  in case of extreme sparsity. The conditions we need for  unknown $\sigma$ are not much more restrictive than for known  $\sigma$. Our proof is technically rather involved, and cannot be easily extended to prove ABOS for mBIC1 - mBIC3. However, in analogy to the case of known variance we conjecture that these criteria should be ABOS for a wide range of sparsity levels. This conjecture is underpinned by simulations, which show good properties of the new versions of mBIC both for known and unknown $\sigma$.

The rest of the paper is organized as follows. In Section \ref{Sec:MultipleTest} we present results for multiple testing, whereas  Section \ref{Sec:Regression} focuses on linear regression models under orthogonality.
The main emphasis of Sections \ref{Sec:Fixed} and \ref{Sec:BFDR} is the generalization of results from \cite{Bog2009} to the situation of general distributions under the alternative.  Section \ref{Sec:Bonf} shows ABOS of Bonferroni correction in case of extreme sparsity. The most important theorems on multiple testing are given in Section \ref{Sec:FDR}, where ABOS of step-up and step-down FDR controlling procedures is proven. These results are needed in Section \ref{Sec:mBIC2} to show ABOS of the FDR-controlling model selection criteria, after ABOS of mBIC for known variance was shown in Section \ref{Sec:mBIC}. Optimality results of mBIC for unknown variance are proved in Section \ref{Sec:mBIC_su}. Finally in Section \ref{Simulation} different model selection criteria are compared in a small simulation study. Most proofs of technical results can be found in the Appendix.

\section{ABOS for multiple testing rules}   \label{Sec:MultipleTest}

Consider a set of $m$ normal populations $\NN(\mu_i, \sigma^2)$, $i=1,\ldots,m$.  We are interested in testing  point null hypotheses $H_{0 i}:\mu_i = 0$ against the alternatives $H_{Ai}:\mu_i\neq 0$, based on simple random samples $X_i=(X_{1i},\ldots, X_{ni})$  of size $n$ from each of these populations.
The effects under study $\mu_i$ are supposed to be independent and identically distributed according to a mixture distribution
\beq \label{mixture}
\nu_{mix} = (1-p) d_0 + p \nu \;,
\eeq
 where $d_{0}$ is the Dirac measure at 0, $\nu$ is a  probability measure on the real line describing the distribution of $\mu_i$ under the  alternative, and $p\in(0,1)$ is the proportion of alternatives among all tests. Since $\nu$ describes  the alternative distribution of the different $\mu_i$, we assume that $\nu(\{0\})=0$. Furthermore  both positive and negative values of $\mu_i$ should be possible, that is
 \begin{equation}\label{as_both}
 \nu(-\infty,0)>0\;\;\;\mbox{and}\;\;\;\nu(0,\infty)>0\;.
 \end{equation}

From (\ref{mixture}) it easily follows that the marginal distribution of the sample mean $\bar X_i=\frac{1}{n}\sum_{j=1}^n X_{ji}$  is the mixture
\beq \label{barX}
\bar X_i \sim (1-p) \NN(0,\sigma^2/n) + p\ \left(\nu * \NN(0,\sigma^2/n)\right) \; ,
\eeq
where the  pdf of the second measure is computed by convolution of $\nu$ and $\NN(0,\sigma^2/n)$.

Our decision theoretic framework for multiple testing is based on a generalization of the standard 0-1 loss. There are $m$ decisions to be made. For each false rejection (type I error) we assign a loss of $\delta_0$, and for missing a true signal (type II error) a loss of $\delta_A$. The total loss of a multiple testing procedure is then defined as the sum of losses for individual tests \citep{Leh1}. The total loss is clearly minimized by applying the Bayes  classifier to each individual test, the decision rule which was called Bayes oracle in \citep{Bog2009}.

Hence our first task is to determine the critical values $a_n$ and $b_n$ corresponding to the Bayes classifier for each individual test.  As noted in \cite{JT}, if $p\in(0,1)$ then for any measure $\nu$ satisfying (\ref{as_both}) and  sufficiently large $n$,  the Bayes classifier chooses $H_{0i}$ if
$\bar X_i \in (a_n, b_n)$, where the critical values $a_n$ and $b_n$ are uniquely defined by
\begin{eqnarray}\label{JT1}
&& a_n < 0 < b_n \nonumber \\
&& (1-p) \delta_0 = p\ \delta_A  \int\limits_{\mathcal{R}  } \exp\left(n(a_n \frac{\mu}{\sigma^2} -
\frac{\mu^2}{2 \sigma^2})\right)\ d\nu(\mu) \; , \\
&& (1-p) \delta_0 = p\ \delta_A  \int\limits_{\mathcal{R}  } \exp\left(n(b_n \frac{\mu}{\sigma^2} -
\frac{\mu^2}{2 \sigma^2})\right)\ d\nu(\mu) \; . \nonumber
\end{eqnarray}

Let $\delta = \delta_0/\delta_A$ denote the ratio of type I error and type II error losses, and let $f = (1-p)/p$ which serves as a measure of sparsity. In the forthcoming asymptotic analysis we will assume that $m\rightarrow \infty$ and that $n=n_m\rightarrow \infty$. Furthermore we will allow the parameters $\delta=\delta_m$ and $p=p_m$ to depend on $m$, whereas $\sigma$  and $\nu$ are kept fixed. For  simplicity of notation the index $m$ will be omitted for $n$, $\delta$ and $p$.  The most generic situation will be  $p \rightarrow 0$, in which case $f\rightarrow \infty$.  However,  theorems are formulated  in the more general setting under the following assumption:

\vspace{2mm}

\noindent {\bf Assumption (A)}:  $\ \ n \rightarrow\infty$, $\delta f  \rightarrow c \in (0,\infty]$, and $\frac{2\log (
\delta f)}{n}\rightarrow C$, where $0\leq C<\infty$.

\vspace{2mm}

\begin{rem}
{\rm

Under the model assumptions of \citep{Bog2009}  ``signals on the verge of detectability'' had to  satisfy $\frac{2\log (\delta f)}{u}\rightarrow C\in(0,\infty)$, which yielded asymptotic power of the Bayes oracle  within $(0,1)$. 
Here we are concerned with a  different situation, where the alternative distribution for $\mu_i$ is not necessarily normal and does not depend on $p$, but the number $n$ of individuals increases to infinity.   In this setting the role of $u$ is taken by $n$. Compared to the assumptions in \citep{Bog2009}  the  major  difference is that we additionally consider the case $\frac{2\log (\delta f)}{n}\rightarrow C= 0$,  which means that the asymptotic power of the Bayes oracle is equal to 1. This additional case covers the  interesting scenario where sparsity is of the form $p = m^{-\beta}, \ \beta > 0$, $\log \delta=o(\log m)$ and $n\in(m^{c_1}, m^{c_2})$, for any positive constants $c_1<c_2$. 
 }
\end{rem}

The generic situation will be concerned with sparsity and with the loss ratio $\delta$ having no dominating influence on the asymptotic results. We formalize this  in

\vspace{2mm}

\noindent {\bf Assumption (B)}:  $\ \  n \rightarrow\infty, p  \rightarrow 0$,  $\log \delta = o(\log p)$ and $\delta$  bounded from above.

\vspace{4mm}

\noindent If Assumption (B) holds, then  $-\frac{2 \log p}{n} \rightarrow C \geq 0$ is enough to guarantee that Assumption (A) is fulfilled. All theorems in Section \ref{Sec:Regression} are formulated under Assumption (B).

\vspace{2mm}

The following assumption imposes a restriction on the measure $\nu$, which will be used throughout this mansucript.

\vspace{2mm}

\noindent {\bf Assumption (C)}: $\ \ $
Let $T := \sigma \sqrt{C}$. We assume that there exists $\e > 0$ such that $\nu$ has a positive bounded density $\rho$ with respect to Lebesgue measure on $[-T - \e, -T +\e]$ and $[T - \e, T +\e]$. In case of $C = 0$ it is further assumed that  $\rho(0^-) := \lim\limits_{\mu \uparrow 0} \rho(\mu)$ and   $\rho(0^+) := \lim\limits_{\mu \downarrow 0} \rho(\mu)$ both exist and are  finite and positive.

\vspace{2mm}

The following Lemma provides  the asymptotic critical points of the  Bayes rule for distributions $\nu$ satisfying  Assumption (C).

\begin{lem} \label{LemF1}

Let Assumptions (A) and (C) hold. Then the critical values converge with limits
$$
a_n \rightarrow -T \mbox{ and } b_n \rightarrow T\ .
$$
\end{lem}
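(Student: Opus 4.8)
The plan is to read off both limits from a Laplace (large-$n$) analysis of the integral in (\ref{JT1}). Writing the defining equation for $b_n$ in terms of $\delta=\delta_0/\delta_A$ and $f=(1-p)/p$ as
\[
\delta f = \int_{\mathcal R}\exp\Big(\tfrac{n}{\sigma^2}\big(b_n\mu-\tfrac{\mu^2}{2}\big)\Big)\,d\nu(\mu),
\]
I note that the exponent $b_n\mu-\mu^2/2$ is maximized at $\mu=b_n$ with maximal value $b_n^2/2$. Since Assumption (A) gives $\tfrac{2\log(\delta f)}{n}\to C$, the heuristic $\log(\delta f)\approx nb_n^2/(2\sigma^2)$ forces $b_n^2/\sigma^2\to C$, i.e. $b_n\to\sigma\sqrt C=T$. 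The work is to turn this into matching upper and lower bounds on $b_n$.

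First I would get the lower bound on $b_n$ cheaply: bounding the integrand pointwise by its maximum gives $\delta f\le \exp(nb_n^2/(2\sigma^2))$, hence $\tfrac{2\log(\delta f)}{n}\le b_n^2/\sigma^2$, and taking $\liminf$ yields $\liminf_n b_n\ge T$. The upper bound is where Assumption (C) enters, through the positive bounded density $\rho$ on $[T-\e,T+\e]$ (and, when $C=0$ so $T=0$, the positive one-sided limits $\rho(0^\pm)$). I would first rule out escape to large values: if $b_n>T+\e$ then $\mu\mapsto b_n\mu-\mu^2/2$ is increasing across $[T-\e,T+\e]$, and restricting the integral to a fixed subinterval near the right endpoint gives $\log(\delta f)\ge \tfrac{n}{\sigma^2}\big(b_n s-s^2/2\big)+O(1)$ for a fixed $s\in(T,T+\e)$; combined with $\tfrac{2\log(\delta f)}{n}\to C=T^2/\sigma^2$ this forces $b_n\le (T^2+s^2)/(2s)+o(1)<T+\e$, a contradiction. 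Thus $b_n\le T+\e$ eventually, and together with $\liminf b_n\ge T$ the critical point lies in $[T-\e,T+\e]$ for all large $n$, precisely the region where $\rho>0$.

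With $b_n$ trapped inside the interval carrying a density, I would apply the Laplace approximation at the interior maximizer $\mu=b_n$: restricting to a shrinking neighborhood of $b_n$ and using $\rho(b_n)>0$ gives $\delta f\ge \rho(b_n)\sqrt{2\pi\sigma^2/n}\,\exp(nb_n^2/(2\sigma^2))(1+o(1))$, so $\log(\delta f)\ge nb_n^2/(2\sigma^2)-\tfrac12\log n+O(1)$. Because the $\tfrac12\log n$ and $O(1)$ corrections vanish after dividing by $n$, the condition $\tfrac{2\log(\delta f)}{n}\to C$ yields $\limsup_n b_n\le T$. Combining the two bounds gives $b_n\to T$, and $a_n\to -T$ follows verbatim from the second equation in (\ref{JT1}) together with the density assumption on $[-T-\e,-T+\e]$.

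The main obstacle is the interplay between the unknown location of the maximizer $\mu=b_n$ and the interval on which $\nu$ is assumed to admit a density: the Laplace lower bound can be invoked only after $b_n$ has been confined to $[T-\e,T+\e]$, which is why the ``no escape'' estimate via the endpoint must come first. The delicate regime is $C=0$ (equivalently $T=0$), where the maximizer approaches the boundary point $0$ and the Gaussian peak may straddle it; here the assumption that $\rho(0^+),\rho(0^-)$ exist and are finite and positive is exactly what keeps the leading $\Theta(n^{-1/2})$ Laplace factor bounded away from $0$, so the same rate identification goes through.
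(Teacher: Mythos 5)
Your proof is correct, but it takes a genuinely different route from the paper's. The paper rewrites $(\delta f)^{1/n}=\left\|h_n\right\|_{L^n(\nu)}$ for $h_n(\mu)=\exp(a_n\mu/\sigma^2-\mu^2/(2\sigma^2))$, proves an auxiliary lemma asserting that these $L^n(\nu)$-norms converge to the $L^\infty(\nu)$-norm of the limit function, and then argues by accumulation points: an accumulation point $a$ of $a_n$ lying in the support of $\nu$ must satisfy $\exp(a^2/2\sigma^2)=e^{C/2}$, hence $a=-T$, while an accumulation point lying in a gap of the support is excluded by a case analysis on which endpoint of the gap realizes the essential supremum, each case contradicting Assumption (C). You instead sandwich $b_n$ directly: the pointwise bound $\int h_n^n\,d\nu\le\exp(nb_n^2/2\sigma^2)$ gives $\liminf b_n\ge T$ for free; evaluating the increasing integrand at the left endpoint of a fixed subinterval $[s,T+\e]$ of positive $\nu$-mass rules out $b_n>T+\e$; and a Laplace lower bound at the now-interior maximizer $\mu=b_n$ gives $\limsup b_n\le T$. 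Your version dispenses with both the $L^p$-norm convergence lemma and the support-gap analysis, and it makes more explicit where Assumption (C) enters (positive mass near $T$ for the no-escape step, a density bounded below near the maximizer for the Laplace step — note the paper's own usage confirms that ``positive bounded density'' is to be read as having positive infimum and finite supremum on the stated intervals, which your argument needs). The price is that you import into Lemma~\ref{LemF1} the completion-of-squares and Gaussian-integral computation that the paper defers to the proof of Lemma~\ref{LemF2}; as a byproduct your argument already yields $\log(\delta f)=nb_n^2/(2\sigma^2)-\tfrac12\log n+O(1)$, i.e.\ most of that finer result. Your ordering of the two localization steps — confining $b_n$ to $[T-\e,T+\e]$ \emph{before} invoking the density at the maximizer — is exactly the right one, and your handling of the boundary case $C=0$, where the Gaussian peak straddles $0$ and the one-sided limits $\rho(0^\pm)$ keep the $n^{-1/2}$ Laplace factor bounded away from zero, is sound.
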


The proof is given in Appendix \ref{Sec:App0}.

\vspace{2mm}

{\bf Notation:} Throughout the paper we will make use of the following notation: Let $g_n$ and $h_n$ be two sequences. Then $g_n \sim h_n$ indicates that $\frac{g_n}{h_n}   \rightarrow 1$ as  $n \rightarrow \infty$. If $g_n \rightarrow 0$ we write $g_n = o_n$.

\vspace{2mm}

The following Lemma \ref{LemF2} specifies the rate at which $a_n$ and $b_n$ converge to zero in case of $C=0$.

\begin{lem} \label{LemF2}
Let Assumptions (A) and (C) hold. If $\ C=0$ then the critical values of the Bayes oracle fulfill
\begin{equation} \label{f0a}
 \sqrt{n}  e^{- \frac{n a_n^2}{2\sigma^2}} \sim \frac{\sqrt{2 \pi} \sigma}{f \delta}  \rho(0^-)
\end{equation}
and
\begin{equation} \label{f0b}
\sqrt{n}  e^{- \frac{n b_n^2}{2\sigma^2}} \sim  \frac{\sqrt{2 \pi} \sigma}{f \delta}   \rho(0^+)   \; .
\end{equation}
\end{lem}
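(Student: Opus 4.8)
The plan is to start from the defining relations $(\ref{JT1})$ for the critical points. Dividing the equation for $b_n$ by $p\,\delta_A$ and recalling $\delta f=\frac{(1-p)\delta_0}{p\,\delta_A}$, it becomes $\delta f=\int_{\mathbb{R}}\exp\!\big(n(b_n\mu/\sigma^2-\mu^2/(2\sigma^2))\big)\,d\nu(\mu)$. Completing the square, $b_n\mu/\sigma^2-\mu^2/(2\sigma^2)=\frac{b_n^2}{2\sigma^2}-\frac{(\mu-b_n)^2}{2\sigma^2}$, so this reads
\[
\delta f\,e^{-n b_n^2/(2\sigma^2)}=\int_{\mathbb{R}}\exp\!\Big(-\frac{n(\mu-b_n)^2}{2\sigma^2}\Big)\,d\nu(\mu)=:I_n ,
\]
an integral of $\nu$ against a Gaussian kernel of width $\sigma/\sqrt n$ centred at $b_n$. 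Since $C=0$ gives $T=0$, Lemma \ref{LemF1} yields $b_n\to0$, and by $(\ref{JT1})$ in fact $b_n\to0^+$, so the kernel concentrates near $0$, where Assumption (C) provides a positive bounded density with one-sided limit $\rho(0^+)$. The whole task reduces to showing $\sqrt n\,I_n\to\sqrt{2\pi}\,\sigma\,\rho(0^+)$: since $\sqrt n\,e^{-n b_n^2/(2\sigma^2)}=\sqrt n\,I_n/(\delta f)$, the common factor $\delta f$ cancels and the ratio of $\sqrt n\,e^{-n b_n^2/(2\sigma^2)}$ to $\frac{\sqrt{2\pi}\,\sigma}{f\delta}\rho(0^+)$ tends to $1$, which is $(\ref{f0b})$; the computation for $a_n$ is identical and gives $(\ref{f0a})$.

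Before evaluating the limit I would first prove the qualitative fact that $b_n\sqrt n\to\infty$, i.e. that the centre of the kernel dominates its scale. This is precisely what forces the limit to pick out the one-sided value $\rho(0^+)$ rather than a mixture of $\rho(0^-)$ and $\rho(0^+)$. To establish it, split $I_n$ at $|\mu|=\e$: on $[-\e,\e]$ bound $\rho\le M$ and use $\int_{\mathbb{R}}e^{-n(\mu-b_n)^2/(2\sigma^2)}\,d\mu=\sqrt{2\pi}\,\sigma/\sqrt n$, while for $|\mu|>\e$ the kernel is at most $e^{-n\e^2/(8\sigma^2)}$ once $|b_n|<\e/2$; hence $I_n=O(1/\sqrt n)$. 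Combining this with $\delta f\to c\in(0,\infty]$ (so $\delta f$ is bounded below by a positive constant) and $e^{n b_n^2/(2\sigma^2)}=\delta f/I_n$ gives $e^{n b_n^2/(2\sigma^2)}\ge\mathrm{const}\cdot\sqrt n\to\infty$, so $n b_n^2\to\infty$ and $t_n:=b_n\sqrt n/\sigma\to\infty$.

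With $t_n\to\infty$ in hand I would compute $\sqrt n\,I_n$ by the substitution $\mu=b_n+(\sigma/\sqrt n)z$. The contribution of $|\mu|>\e$ is at most $\sqrt n\,e^{-n\e^2/(8\sigma^2)}\to0$, while the density region contributes $\sigma\int e^{-z^2/2}\,\rho\big(b_n+(\sigma/\sqrt n)z\big)\,\mathbf{1}\{|b_n+(\sigma/\sqrt n)z|\le\e\}\,dz$. For each fixed $z$ the argument equals $\frac{\sigma}{\sqrt n}(t_n+z)$, which is eventually positive (as $t_n\to\infty$) and tends to $0^+$, so $\rho(\cdot)\to\rho(0^+)$; the indicator tends to $1$ because $b_n\to0$. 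Dominated convergence with majorant $M e^{-z^2/2}$ then gives $\sqrt n\,I_n\to\sigma\,\rho(0^+)\int_{\mathbb{R}}e^{-z^2/2}\,dz=\sqrt{2\pi}\,\sigma\,\rho(0^+)$, which is $(\ref{f0b})$. The argument for $a_n$ only replaces $b_n\to0^+$ by $a_n\to0^-$ and $t_n$ by $|a_n|\sqrt n/\sigma\to\infty$, so the kernel now sees the left side of $0$ and the limit becomes $\rho(0^-)$, giving $(\ref{f0a})$.

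I expect the genuine obstacle to be the establishment of $b_n\sqrt n\to\infty$ together with the resulting control of one-sidedness, since this is where Assumption (A) at $C=0$ (through the lower bound on $\delta f$ and the $O(1/\sqrt n)$ bound on $I_n$) and Assumption (C) (through the separate finite positive limits $\rho(0^\pm)$) are actually used. The remaining steps form a standard Laplace-type concentration estimate, justified throughout by dominated convergence.
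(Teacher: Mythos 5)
Your proof is correct. It shares the paper's basic strategy --- complete the square in (\ref{JT1}) so that $\delta f\,e^{-nb_n^2/(2\sigma^2)}$ becomes a Gaussian integral against $\nu$ at scale $\sigma/\sqrt n$ centred at $b_n\to 0$, then extract the one-sided limits of $\rho$ --- but the execution differs in the step that matters. The paper sandwiches the density between its infimum and supremum on $(-\e,\e)$ to show that $S_n=(f\delta)^{-1}n^{-1/2}e^{na_n^2/(2\sigma^2)}$ is bounded, and then introduces an auxiliary scale $g_n$ with $a_n=o(g_n)$, $ng_n^2\gg\log n$ and $ng_n^2\gg\log(\delta f)$, splitting the integral at $\pm g_n$ so that only $(-g_n,0)$ contributes to leading order; as the paper's own remark notes, the admissible choice of $g_n$ depends on the growth of $\delta f$. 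You instead prove the single qualitative fact $\sqrt n\,b_n\to\infty$ from the a priori bound $I_n=O(n^{-1/2})$ (boundedness of $\rho$ near $0$ plus the exponentially small tail) together with the positive lower bound on $\delta f$ from Assumption (A), and then rescale via $\mu=b_n+\sigma z/\sqrt n$ and apply dominated convergence with majorant $Me^{-z^2/2}$; one-sidedness is automatic because $t_n+z\to+\infty$ for every fixed $z$, so the argument of $\rho$ approaches $0$ from the right and the existence of $\rho(0^+)$ suffices. Your route avoids the auxiliary sequence entirely and makes transparent where each hypothesis enters; the paper's two-stage sandwich is slightly more quantitative, which is what allows its subsequent remark to convert a one-sided Lipschitz condition on $\rho$ into an explicit $o(n^{-1/2}\log n)$ error rate --- something the bare dominated-convergence argument does not deliver.
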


The proof is given in Appendix \ref{Sec:App1}.

\begin{rem}
{\rm
As shown in the proof of Lemma \ref{LemF2}, the accuracy of the approximations provided in (\ref{f0a}) and (\ref{f0b})  depends on the asymptotic behavior of $\delta f$  and on the regularity of $\rho$ in a neighborhood of 0. Assuming for example that $\rho$ is one-sided Lipschitz (on both sides of 0) and that $\delta f$ is polynomially bounded one obtains that the ratio of the right and left-hand sides of (\ref{f0a}) and (\ref{f0b}) can be expressed as $1+z_n$ with   $z_n=o(n^{-1/2} \log n)$.
}\end{rem}
\begin{rem}\label{v}
{\rm The results of Lemmas \ref{LemF1} and \ref{LemF2} generalize the critical value of the Bayes rule specified  in  \citep{BCFG} and  \citep{Bog2009}. Note that for $\nu \sim {\cal N}(0,\tau^2)$ the ``magnitude'' of the true signal defined in \citep{Bog2009} is given by $u=\frac{n\tau2}{\sigma^2}$.  Thus, according to Lemma \ref{LemF1}, for $C>0$ the Bayes classifier rejects the null hypothesis if
$$ \frac{n \bar X_n^2}{\sigma^2}>  \log(u f^2 \delta^2)(1+o_n)\;\;,$$
which agrees with the results of \citep{Bog2009}. 

 Next consider the case $C=0$. For normal distribution $\mu_i \sim {\cal N}(0,\tau^2)$ it holds that
$\rho(0^-)  = \frac{1}{\sqrt{2 \pi} \tau}$. Taking logarithms of (\ref{f0a}) we  obtain the accurate approximation
$$
\frac{n a_n^2}{\sigma^2} =  2 \log\left( \frac{\delta f \sqrt n}{\sqrt{2 \pi}\  \sigma  \rho(0^-)}  \right)+o_n
= \log( u f^2 \delta^2)+o_n\;\;
$$
and because of $\rho(0^-) = \rho(0^+)$ the same relation holds for $b_n$.
}\end{rem}

\vspace{3mm}

To emphasize similarity with the results for normal scale mixture models from \citep{Bog2009} we introduce the notation
$$
v:= n \delta^2 f^2\;\;.
$$

Then according to Lemmas \ref{LemF1} and \ref{LemF2}  the Bayes oracle threshold values  satisfy
\begin{equation}\label{cv_general}
a_n = -\sigma \sqrt{\frac{\log v }{n}}(1+o_n)\mbox{ and }
b_n = \sigma \sqrt{\frac{\log v}{n}}(1+o_n)\ .
 \end{equation}


The risk for a multiple testing rule is computed under the additive loss of individual tests simply as the sum of the risks of individual tests. Note that for the specified mixture model (\ref{barX}) type I error $t_1$ and type II error $t_2$ of fixed threshold rules are identical for each individual test. The corresponding  risk is therefore defined as
\beq \label{risk1}
R = R_1 + R_2 = m(1-p) t_1 \delta_0 + m p t_2 \delta_A \; .
\eeq
In the following theorem we compute the asymptotic risk $R^B$ of the Bayes oracle.

\begin{thr}\label{TH1_general}
Under Assumptions (A) and (C) the risk obtained by the Bayes rule (\ref{JT1}) takes for $C=0$   the form
\begin{equation}\label{optrisk_general}
R^B=m p \delta_A \sigma  \sqrt{\frac{\log v}{n} }\left(\rho(0^-) +
\rho(0^+)\right) (1 + o_n)
\end{equation}
whereas for  $0 < C < \infty$
\begin{equation}\label{optrisk1_general}
R^B = m p \delta_A  \  \nu(-T,T) (1 + o_n) \ .
\end{equation}
\end{thr}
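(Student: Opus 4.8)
The plan is to use the additive structure (\ref{risk1}), writing $R^B = R_1 + R_2$ with $R_1 = m(1-p)t_1\delta_0$ and $R_2 = mp\,t_2\delta_A$, and to prove in both regimes that $R^B \sim R_2$ while $R_1$ is of strictly smaller order. Standardising $\bar X_i$ by $\sqrt n/\sigma$ and recalling that the oracle accepts $H_{0i}$ exactly on $(a_n,b_n)$, the two error probabilities are
\[
t_1 = \Phi\!\Big(\tfrac{\sqrt n\,a_n}{\sigma}\Big) + 1 - \Phi\!\Big(\tfrac{\sqrt n\,b_n}{\sigma}\Big),
\qquad
t_2 = \int_{\mathcal R}\Big[\Phi\!\Big(\tfrac{\sqrt n(b_n-\mu)}{\sigma}\Big) - \Phi\!\Big(\tfrac{\sqrt n(a_n-\mu)}{\sigma}\Big)\Big]\,d\nu(\mu).
\]
Throughout I use the Mills-ratio asymptotics $1-\Phi(x)\sim\phi(x)/x$ as $x\to\infty$, the critical-value equations (\ref{JT1}), and the identity $R_1/R_2 = f\delta\,t_1/t_2$, which follows from $(1-p)/p=f$ and $\delta_0/\delta_A=\delta$.

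Consider first $0<C<\infty$. By Lemma \ref{LemF1}, $a_n\to -T$ and $b_n\to T$. In the integrand of $t_2$, for fixed $\mu\in(-T,T)$ the first standardised argument tends to $+\infty$ and the second to $-\infty$, so the bracket tends to $1$; for $|\mu|>T$ both arguments diverge to the same infinity and the bracket tends to $0$; the set $\{\pm T\}$ is $\nu$-null. Dominated convergence then gives $t_2\to\nu(-T,T)$, hence $R_2\sim mp\delta_A\,\nu(-T,T)$, which is (\ref{optrisk1_general}). To discard $R_1$ I show $f\delta\,t_1\to0$. Completing the square in (\ref{JT1}) gives $f\delta = e^{nb_n^2/2\sigma^2}\int e^{-n(\mu-b_n)^2/2\sigma^2}\,d\nu(\mu)$, and a Laplace expansion using the positive bounded density of $\nu$ near $T$ (Assumption (C)) shows the integral is of order $n^{-1/2}$, so that $e^{-nb_n^2/2\sigma^2}$ is of order $(f\delta\sqrt n)^{-1}$; together with the Mills ratio this yields $f\delta\,[1-\Phi(\sqrt n\,b_n/\sigma)]$ of order $n^{-1}\to0$, and symmetrically for the $a_n$ term. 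Thus $R_1=o(R_2)$ and $R^B\sim R_2$.

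Now take $C=0$. Here $a_n,b_n\to0$, but by (\ref{cv_general}) the standardised thresholds $\beta:=\sqrt n\,b_n/\sigma$ and $\alpha:=\sqrt n\,a_n/\sigma$ still satisfy $\beta\sim\sqrt{\log v}$ and $\alpha\sim-\sqrt{\log v}$, so $t_2$ does not collapse to an indicator and must be evaluated directly. Splitting at $\mu=0$, substituting $\mu=\sigma w/\sqrt n$, and using that the density of $w$ near $0$ equals $\sigma\rho(0^\pm)/\sqrt n\,(1+o_n)$, the $\mu>0$ contribution becomes $(\sigma\rho(0^+)/\sqrt n)\int_0^\infty[\Phi(\beta-w)-\Phi(\alpha-w)]\,dw\,(1+o_n)$. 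The elementary identity $\int_0^\infty\Phi(c-w)\,dw = c\Phi(c)+\phi(c)$ reduces the inner integral to $\beta\Phi(\beta)+\phi(\beta)-\alpha\Phi(\alpha)-\phi(\alpha)\sim\beta\sim\sqrt{\log v}$; the $\mu<0$ part gives the same with $\rho(0^+)$ replaced by $\rho(0^-)$, though one must keep the two $\Phi$-integrals together to regularise an $\infty-\infty$ before letting $\beta\to\infty$. Summing yields $t_2\sim\sigma\sqrt{\log v/n}\,(\rho(0^-)+\rho(0^+))$, i.e. (\ref{optrisk_general}) for $R_2$. For $R_1$, the sharp two-sided rates of Lemma \ref{LemF2} fed into the Mills ratio give $\Phi(\alpha)\sim\sigma\rho(0^-)/(f\delta\sqrt{n\log v})$ and $1-\Phi(\beta)\sim\sigma\rho(0^+)/(f\delta\sqrt{n\log v})$, so $R_1\sim mp\delta_A\,\sigma(\rho(0^-)+\rho(0^+))/\sqrt{n\log v}$ and $R_1/R_2\sim1/\log v\to0$; again $R^B\sim R_2$.

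The main obstacle I anticipate is the type-II computation when $C=0$: the acceptance region shrinks to $\{0\}$ yet its standardised width grows like $\sqrt{\log v}$, so $t_2$ cannot be obtained by dominated convergence and the leading order surfaces only after the exact Gaussian integral is combined with the sharp, two-sided asymptotics of Lemma \ref{LemF2} rather than the cruder Lemma \ref{LemF1}. Controlling the variation of $\rho$ over the $O(1/\sqrt n)$ window around $0$, handling the boundary terms $\beta\Phi(\beta)$ and $\alpha\Phi(\alpha)$, and certifying that all remainders are genuinely $o_n$ is the delicate bookkeeping of the argument.
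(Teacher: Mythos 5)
Your proposal is correct and takes essentially the same approach as the paper's proof in Appendix \ref{Sec:App2}: the same decomposition $R^B=R_2(1+f\delta t_1/t_2)$, dominated convergence for $t_2$ when $C>0$, a local analysis of $\rho$ near $0$ (over a window of width $O(\sqrt{\log v/n})$) for $t_2$ when $C=0$, and Gaussian tail estimates combined with (\ref{JT1}) and Lemma \ref{LemF2} to show the type I contribution is $O(1/\log v)$ or $O(1/n)$ of the type II term. The only differences are cosmetic: you use the substitution $\mu=\sigma w/\sqrt n$ together with $\int_0^\infty\Phi(c-w)\,dw=c\Phi(c)+\phi(c)$ where the paper uses the mean value theorem and $\int_{-x}^{x}\Phi(z)\,dz=x$, and for the type I rate at $C=0$ you invoke the sharper Lemma \ref{LemF2} where the paper appeals to (\ref{cv_general}).
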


The proof is given in  Appendix \ref{Sec:App2}.

\ \\

{\bf Definition:}
A multiple testing rule is called asymptotically Bayes optimal under sparsity (ABOS) if its risk $R$  satisfies   $\frac{R}{R^B} \rightarrow 1$ under the conditions of Assumption (A).

\subsection{ABOS of fixed threshold rules} \label{Sec:Fixed}

The next theorem describes which multiple testing rules with fixed threshold are ABOS.

\begin{thr} \label{TH2_general}
Consider the testing rule which rejects $H_{0i}$ if $\bar X_i$ falls out of the interval $(\tilde a_n, \tilde b_n)$, with $\tilde a_n<0$ and $\tilde b_n>0$.  Under Assumptions (A) and (C) this rule is ABOS if and only if
\begin{equation}\label{optcv_general}
\frac{n\tilde a_n^2}{\sigma^2} = \log v + z_a \quad \mbox{and}\quad  \frac{n\tilde b_n^2}{\sigma^2} = \log v + z_b
\end{equation}
where
\begin{equation}\label{optcv1_general}
z_a = o(\log v) \; , \quad z_b = o(\log v) \; ,
\end{equation}
 and
\begin{equation}\label{optcv2_general}
\lim_{n\rightarrow \infty}  z_a +  2 \log \log v = \infty \; , \quad
\lim_{n\rightarrow \infty}  z_b +  2 \log  \log v = \infty \ .
\end{equation}
\end{thr}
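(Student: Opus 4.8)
The plan is to compute the risk $R = R_1 + R_2$ of the fixed-threshold rule $(\tilde a_n, \tilde b_n)$ and compare it asymptotically with the oracle risk $R^B$ from Theorem \ref{TH1_general}. Writing $\tilde a_n = -\sigma\sqrt{(\log v + z_a)/n}$ and $\tilde b_n = \sigma\sqrt{(\log v + z_b)/n}$ as dictated by (\ref{optcv_general}), the type I error contributions split cleanly at the two thresholds. Under $H_0$ we have $\bar X_i \sim \NN(0,\sigma^2/n)$, so $\sqrt n\,\tilde b_n/\sigma = \sqrt{\log v + z_b}$ is the standardized upper cutoff, and the upper type I probability is $\bar\Phi(\sqrt{\log v + z_b})$. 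Using the standard Mills-ratio tail bound $\bar\Phi(x) \sim \frac{1}{x\sqrt{2\pi}} e^{-x^2/2}$ for $x \to \infty$, the right-tail false-positive contribution to $R_1$ behaves like
\begin{equation}\label{R1plan}
m(1-p)\,\delta_0\,\frac{1}{\sqrt{2\pi}\,\sqrt{\log v}}\,\frac{e^{-(\log v + z_b)/2}}{(1+o_n)}\;,
\end{equation}
and symmetrically for $z_a$ at the lower cutoff. Since $e^{-(\log v)/2} = v^{-1/2} = (n\delta^2 f^2)^{-1/2}$ and $m(1-p)\delta_0 = m p \delta_A\,\delta f$ (recall $\delta = \delta_0/\delta_A$, $f=(1-p)/p$), one checks that this matches the scale $mp\delta_A\,\sigma\sqrt{(\log v)/n}$ appearing in $R^B$ up to the factor $e^{-z_b/2}/\sqrt{\log v}$. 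The first key step is thus to show that $R_1/R^B \to 0$ when $z_a, z_b$ satisfy the growth condition (\ref{optcv2_general}): the condition $z_b + 2\log\log v \to \infty$ is exactly what forces $e^{-z_b/2}/\log v \to 0$, i.e. it prevents the false-positive cost from dominating.

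The second key step is to bound the type II error contribution $R_2$ and show $R_2/R^B \to 1$ (in case $0<C<\infty$) or the appropriate density-weighted limit (in case $C=0$). Here I would reuse the machinery behind Theorem \ref{TH1_general}: the type II probability $t_2 = P_\nu(\bar X_i \in (\tilde a_n,\tilde b_n))$ is, to leading order, $\nu$-mass near the thresholds, and since $\tilde a_n \to -T$, $\tilde b_n \to T$ by the $o(\log v)$ condition (\ref{optcv1_general}) together with Lemma \ref{LemF1}, the acceptance interval converges to the oracle interval $(-T,T)$. The condition $z_a, z_b = o(\log v)$ is precisely what guarantees $\tilde a_n \to -T$ and $\tilde b_n \to T$, so that $t_2 \to \nu(-T,T)$ for $0<C<\infty$; for $C=0$ the thresholds shrink to $0$ and the Laplace-type expansion used in Lemma \ref{LemF2} and Theorem \ref{TH1_general} gives the density-weighted rate. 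The necessity direction then runs in reverse: if (\ref{optcv1_general}) fails, the interval converges to the wrong set and $R_2/R^B \not\to 1$; if (\ref{optcv2_general}) fails, then $z_b + 2\log\log v$ stays bounded (or $\to-\infty$) along a subsequence, the bound in (\ref{R1plan}) no longer vanishes relative to $R^B$, and $R_1$ inflates the risk.

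The main obstacle I anticipate is controlling $R_2$ and $R_1$ \emph{simultaneously and sharply} in the two regimes $C=0$ and $0<C<\infty$, because the oracle risk $R^B$ has genuinely different forms in (\ref{optrisk_general}) versus (\ref{optrisk1_general}), so the same threshold perturbation $z_a, z_b$ must be shown to be negligible against two different normalizations. In particular, the $C=0$ case requires the refined Laplace/Mills expansion with the local densities $\rho(0^\pm)$ and the factor $\sqrt{n}\,e^{-n\tilde b_n^2/(2\sigma^2)}$, where I must carry the $(1+o_n)$ remainder carefully enough to see that perturbing the oracle cutoff $b_n$ by a shift $z_b = o(\log v)$ changes $t_2$ only at lower order. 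I expect the cleanest route is to factor every quantity through $v = n\delta^2 f^2$ and through the oracle thresholds $a_n, b_n$ from (\ref{cv_general}), so that the comparison reduces to estimating the incremental effect of $z_a, z_b$; the two scalar conditions (\ref{optcv1_general}) and (\ref{optcv2_general}) should then emerge as the exact thresholds separating $R/R^B \to 1$ from $R/R^B \not\to 1$, yielding both the sufficiency and the necessity claims.
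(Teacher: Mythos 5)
Your proposal follows essentially the same route as the paper's proof: write the thresholds as $\log v + z_a$, $\log v + z_b$, show via the Laplace/Mills computations behind Theorem \ref{TH1_general} that (\ref{optcv1_general}) forces $R_2 = R^B(1+o_n)$ in both regimes $C=0$ and $C>0$, and then observe that $R_1/R^B \propto \bigl(e^{-z_a/2}+e^{-z_b/2}\bigr)/\log v = \exp\bigl(-(z_a+2\log\log v)/2\bigr)+\exp\bigl(-(z_b+2\log\log v)/2\bigr)$, so that (\ref{optcv2_general}) is exactly the condition $R_1=o(R^B)$; the necessity direction is reversed from the same two displays, just as in the paper (which defers the necessity of (\ref{optcv1_general}) to the argument of Theorem 3.2 of \cite{Bog2009}). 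The only minor imprecision is the phrase ``the interval converges to the wrong set'' for the necessity of (\ref{optcv1_general}) when $C=0$, where both intervals shrink to $\{0\}$ and it is the rate $\sqrt{(\log v + z)/n}$ versus $\sqrt{\log v/n}$ that must be compared, but this is at the same level of detail as the paper itself.
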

The proof is given in  Appendix \ref{Sec:App3}.
\ \\

As a simple consequence of  Theorem \ref{TH2_general} we have
\begin{cor} \label{Cor_optcv}
Suppose that additional to the assumptions of Theorem \ref{TH2_general}  also Assumption (B) holds. If for $n = n_m$  the sparsity assumption
\beq\label{as_spars}
m p \rightarrow s \in (0,\infty], \quad  \frac{\log (m p)}{\log(n/p^2)} \rightarrow 0\;,
\eeq
is fulfilled, then  thresholds of the form
\begin{equation}\label{optcv_pm}
c_a^2 \sim c_b^2 = \log(n m^2) + \xi, \quad \xi = o(\log(n/ p^2))
\end{equation}
yield multiple testing rules which are ABOS, whenever $2 \xi \geq  -2\log(mp) + d$ for some arbitrary constant $d$. In particular this is the case when $\xi$ is a  constant.
\end{cor}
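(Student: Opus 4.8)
The plan is to obtain the corollary as a direct specialization of Theorem \ref{TH2_general}. Identifying the standardized squared thresholds $c_a^2 = n\tilde a_n^2/\sigma^2$ and $c_b^2 = n\tilde b_n^2/\sigma^2$, I would set
$$z_a = c_a^2 - \log v \qquad\text{and}\qquad z_b = c_b^2 - \log v,$$
so that the characterization (\ref{optcv_general}) holds by definition and only the two size conditions (\ref{optcv1_general}) and (\ref{optcv2_general}) remain to be verified. Since the prescribed thresholds are symmetric and the argument for $z_b$ mirrors that for $z_a$, I would treat only $z_a$, substituting $c_a^2 = \log(nm^2)+\xi$.

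The key algebraic step is to re-express $\log(nm^2)$ through $\log v$. Using $f=(1-p)/p$ and $v=n\delta^2 f^2$, a direct expansion gives $\log v = \log(n/p^2) + 2\log\delta + 2\log(1-p)$, and therefore
$$\log(nm^2) - \log v = 2\log(mp) - 2\log\delta - 2\log(1-p).$$
Substituting $c_a^2 = \log(nm^2)+\xi$ then yields the clean expression
$$z_a = 2\log(mp) - 2\log\delta - 2\log(1-p) + \xi,$$
which I would use throughout. I would also record the auxiliary fact that, under Assumption (B), one has $2\log\delta = o(\log(n/p^2))$ and $2\log(1-p)\to 0$, whence $\log v \sim \log(n/p^2)$; this lets me freely replace $o(\log v)$ by $o(\log(n/p^2))$.

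Condition (\ref{optcv1_general}) is then routine: each summand of $z_a$ is $o(\log(n/p^2))$, namely $2\log(mp)$ by the second part of the sparsity assumption (\ref{as_spars}), $2\log\delta$ by Assumption (B), $2\log(1-p)$ trivially, and $\xi$ by hypothesis, so that $z_a = o(\log(n/p^2)) = o(\log v)$, and likewise for $z_b$.

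The delicate point, which I expect to be the main obstacle, is condition (\ref{optcv2_general}), that $z_a + 2\log\log v\to\infty$, since $2\log(mp)$ need not diverge when $mp\to s<\infty$. This is exactly where the hypothesis $2\xi\ge -2\log(mp)+d$ enters: it gives $2\log(mp)+\xi \ge \log(mp)+d/2$, and as $mp\to s\in(0,\infty]$ the quantity $\log(mp)$ is bounded below. Combining this with $-2\log\delta$ bounded below (as $\delta$ is bounded above) and $2\log(1-p)\to 0$ shows that $z_a$ itself is bounded below; since $v=n(\delta f)^2\to\infty$ forces $2\log\log v\to\infty$, the sum $z_a + 2\log\log v\to\infty$ follows, and the same for $z_b$. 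Finally, a constant $\xi$ satisfies $2\xi\ge -2\log(mp)+d$ for a suitable constant $d$ in both regimes $mp\to s<\infty$ and $mp\to\infty$, giving the stated special case; an appeal to Theorem \ref{TH2_general} then completes the proof.
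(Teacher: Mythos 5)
Your proposal is correct and follows essentially the same route as the paper, whose entire proof is the one-line observation that $z=\log(nm^2)+\xi-\log(np^{-2}\delta^2)$ satisfies the requirements of Theorem \ref{TH2_general}; you have simply written out the algebra ($z_a=2\log(mp)-2\log\delta-2\log(1-p)+\xi$, with $\log v\sim\log(n/p^2)$ under Assumption (B)) and the verification of (\ref{optcv1_general}) and (\ref{optcv2_general}) that the paper leaves implicit. The details check out, including the key point that $2\xi\geq-2\log(mp)+d$ together with $\delta$ bounded above keeps $z_a$ bounded below while $2\log\log v\to\infty$.
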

\begin{pro}
Simply observe that $z = \log(n m^2) + \xi - \log(n p^{-2} \delta^2)$ fulfills the requirements of Theorem \ref{TH2_general} under the assumption of the corollary.
\end{pro}
\begin{rem}
{\rm    \label{Rem:Cor_optcv}
Corollary \ref{Cor_optcv}   addresses the situation of extreme sparsity, where the number $m$ of tests increases to infinity, but the  expected number of true signals remains constant or increases only very slowly with $m$. If additionally $\log n = o(\log m)$ then Corollary \ref{Cor_optcv}  implies that   the universal threshold $2\log m$ of \cite{DJ1} is ABOS. This extends  Remark 3.4 of \citep{Bog2009} to the case where the distribution of $\mu_i$ under the alternative is not necessarily normal and does not change with $m$, while the number of individuals $n$ slowly increases with $m$.
}\end{rem}


\subsection{BFDR  controlling procedures}  \label{Sec:BFDR}


One of our main goals is to study ABOS of FDR controlling procedures like the popular Benjamini--Hochberg procedure (BH,\citep{BH}). As in \citep{Bog2009}  the main technical tool to prove ABOS is to approximate the random threshold of BH by the threshold from a rule controlling the Bayesian false discovery rate (BFDR, see \citep{ET}). For that reason we will start our discussion here with results on the asymptotic properties of BFDR rules for general distributions of $\mu_i$ under the alternative.
BFDR is defined as

\begin{equation}\label{BFDR}
BFDR=P(H_{0i}\;\; \mbox{is true}| H_{0i}\;\;\mbox{was
rejected})=\frac{(1-p)t_{1i}}{(1-p)t_{1i}+ p\ (1-t_{2i})}\;\;,
\end{equation}
where $t_{1i}$ and $t_{2i}$ are the probabilities of the corresponding type I and type II errors.
Consider a fixed threshold rule based on $\bar X_i$ with threshold values $a<0$ and $b>0$. Then $t_{1i} = t_1$, $t_{2i} = t_2$, and  under the mixture model (\ref{barX})
$$
t_1 = \Phi(\sqrt n a/\sigma)+1-\Phi(\sqrt n b/\sigma)
$$
and
$$
t_2 = 1-\int\limits_{\mathbb R}(\Phi(\sqrt n( a-\mu)/\sigma)+1-\Phi(\sqrt n (b-\mu)/\sigma))d \nu(\mu) \; .
$$
To obtain threshold values $a_n^B<0$ and $b_n^B>0$ with BFDR level $\alpha$ we have to solve  $\frac{(1-p)t_1}{(1-p)t_1+p(1-t_2)} = \alpha$,  or equivalently
$$\frac{\alpha}{f(1 -\alpha)} = \frac{\Phi(\sqrt n a^B_n/\sigma)+1-\Phi(\sqrt n b^B_n/\sigma)}
 {\int\limits_{\mathbb R} (\Phi(\sqrt n (a^B_n - \mu)/\sigma) + 1- \Phi(\sqrt n (b^B_n - \mu)/\sigma))d \nu(\mu)}\;\;.$$
 We will restrict our attention to  rules based on  symmetric thresholds, such that $a_n^B=-b_n^B$, and use 
\beq \label{BFDR_thresh}
c_B^2 = c_B^2(n) := \frac{n \left(a^B_n\right)^2}{\s^2}  = \frac{n \left(b^B_n\right)^2}{\s^2} \;  
\eeq
to denote the corresponding threshold for the scaled test statistics $Z_i=\frac{n \bar X_i^2}{\sigma^2}$.
Then $c_B$ satisfies the following equation 
\begin{equation}\label{BFDR_rule}
 \frac{\alpha}{f(1 -\alpha)} = \frac{ 2(1 - \Phi(c_B))}{ 2 - \int\limits_{\mathbb R} [\Phi(c_B + \sqrt n \mu/\sigma)+\Phi(c_B - \sqrt n \mu/\sigma)]d \nu (\mu)}\;\;.
\end{equation}
 As shown in Lemma \ref{final} in Appendix \ref{Sec:ApBFDR},  $\alpha \in (0,1-p)$ guarantees existence and uniqueness of a solution $c_B$ for (\ref{BFDR_rule}). 
The following theorem provides conditions on $\alpha$, for which the BFDR controlling rule specified in (\ref{BFDR_rule}) is ABOS.

\begin{thr} \label{TH_BFDR}
Additional to Assumptions (A) and (C) suppose that \\ $\alpha \in (0, 1 - p),\ \alpha \rightarrow \alpha_\infty<1$, and
\begin{equation}\label{con3_general}
f/\alpha \rightarrow \infty, \quad\frac{\log \left(\frac{f}{\alpha}\right)}{n}\rightarrow C_0<\infty\;\;,
\end{equation}
where $C_0$ is such that $\nu(-\sigma \sqrt{2 C_0}, \sigma \sqrt{2C_0})<1$ and $\nu$ has no atoms at $\pm \s \sqrt{2 C_0}$.
The threshold value $c_B$ of the rule controlling BFDR at level $\alpha$ is then given by
\begin{equation}\label{BFDR2_general}
c^2_B =  2\log \left(\frac{f}{\alpha}\right)
- \log \left(  2 \log \left(\frac{f}{\alpha}\right)\right) +
2 \log \left(\frac{\sqrt 2 \ (1 - \alpha_\infty)}{\sqrt \pi \ C_1}\right)  + o_n \;\;,
\end{equation}
where 
$$
C_1 = 1- \nu(-\sigma \sqrt{2 C_0}, \sigma \sqrt{2C_0})\;.
$$ 

The BFDR controlling rule is ABOS if and only if
\beq \label{BFDR_opt}
\frac{\log(f \delta \sqrt n)}{\log(f/\alpha)} \rightarrow 1, \ \
\mbox{ and } \ \ 2 \log(\alpha \delta \sqrt n) - \log \log(f/\alpha) \rightarrow -\infty \;\;.
\eeq
In that case $C_0 = C/2$ and therefore $C_1 = 1- \nu(-T, T)$.
\end{thr}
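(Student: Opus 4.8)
The plan is to first turn the defining relation (\ref{BFDR_rule}) into an explicit asymptotic equation for $c_B$, solve it to second order to obtain (\ref{BFDR2_general}), and then read off the ABOS characterization by matching $c_B$ against the conditions of Theorem \ref{TH2_general}. First I would observe that the denominator in (\ref{BFDR_rule}) is exactly the power $1-t_2$ of the symmetric rule with threshold $c_B$, so that (\ref{BFDR_rule}) can be rewritten as
\[
2\lb 1-\Phi(c_B)\rb = \frac{\alpha}{f(1-\alpha)}\,(1-t_2)\;.
\]
Because $f/\alpha\ra\infty$ and $\alpha\ra\alpha_\infty<1$, the prefactor tends to $0$; since $1-t_2\le 1$, the left-hand side tends to $0$ and hence $c_B\ra\infty$. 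Writing $x_n:=c_B/\sqrt n$, the power takes the form $1-t_2=\int_{\R}\ls 2-\Phi(\sqrt n(x_n+\mu/\sigma))-\Phi(\sqrt n(x_n-\mu/\sigma))\rs d\nu(\mu)$, whose integrand, whenever $x_n\ra x$ with $\pm\sigma x$ not atoms of $\nu$, converges pointwise to $\eins\{|\mu|>\sigma x\}$; dominated convergence then gives $1-t_2\ra\nu(|\mu|>\sigma x)$.

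Next I would pin down the leading order $c_B^2\sim 2\log(f/\alpha)$, equivalently $x_n^2\ra 2C_0$. The Mills ratio expansion $1-\Phi(c)=\frac{\phi(c)}{c}(1+O(c^{-2}))$ applied to the displayed equation, together with $1-t_2\le 1$, yields the lower bound $c_B^2\ge 2\log(f/\alpha)(1-o_n)$. The matching upper bound is the main obstacle: a too-large $x_n$ would drive $1-t_2$ to zero, and one must show this cannot balance the equation. Here I would use the strict monotonicity of the threshold-to-BFDR map (Lemma \ref{final}) to invert the equation, and control the power near the critical level $x=\sqrt{2C_0}$; this is precisely where the no-atom hypothesis and, in the ABOS regime $C_0=C/2$, the positive bounded density of $\nu$ near $\pm T$ from Assumption (C) enter, in the spirit of \citep{JT}. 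Once $x_n\ra\sqrt{2C_0}$ is established, the dominated-convergence limit above gives $1-t_2\ra C_1=1-\nu(-\sigma\sqrt{2C_0},\sigma\sqrt{2C_0})$.

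With the leading order and the limit of $1-t_2$ in hand, I would take logarithms of the displayed equation and use the Mills expansion once more to get $-\tfrac{c_B^2}{2}-\log c_B+\log\sqrt{2/\pi}=-\log(f/\alpha)-\log(1-\alpha)+\log(1-t_2)+o_n$. Substituting $\log(1-\alpha)\ra\log(1-\alpha_\infty)$, $\log(1-t_2)\ra\log C_1$ and $\log c_B=\tfrac12\log\lb 2\log(f/\alpha)\rb+o_n$ (from the leading order), and solving for $c_B^2$, produces (\ref{BFDR2_general}); a short computation checks that the additive constant collapses to $2\log\!\big(\sqrt2\,(1-\alpha_\infty)/(\sqrt\pi\,C_1)\big)$, since $2\log(1-\alpha_\infty)-2\log C_1+\log(2/\pi)$ equals that expression.

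Finally, for the ABOS statement I would appeal to Theorem \ref{TH2_general}. Since the BFDR rule uses a symmetric threshold, in its notation $z_a=z_b=c_B^2-\log v$ with $\log v=2\log(f\delta\sqrt n)$. Condition (\ref{optcv1_general}) ($z_a=o(\log v)$) becomes $c_B^2\sim\log v$, i.e.\ $\log(f\delta\sqrt n)/\log(f/\alpha)\ra 1$, which is the first half of (\ref{BFDR_opt}). Inserting (\ref{BFDR2_general}) into (\ref{optcv2_general}) and using $\log\log v=\log\log(f/\alpha)+O(1)$ reduces $z_a+2\log\log v\ra\infty$ to $2\log(\alpha\delta\sqrt n)-\log\log(f/\alpha)\ra-\infty$, the second half of (\ref{BFDR_opt}); symmetry makes the two conditions in (\ref{optcv2_general}) identical, and the converse is immediate from the ``if and only if'' in Theorem \ref{TH2_general}. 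To close, dividing the first condition by $n$ and combining with $\tfrac{2\log(\delta f)}{n}\ra C$ from Assumption (A) (and $\tfrac{\log\sqrt n}{n}\ra 0$) gives $C_0=\lim\tfrac{\log(f/\alpha)}{n}=C/2$, whence $T=\sigma\sqrt C=\sigma\sqrt{2C_0}$ and $C_1=1-\nu(-T,T)$.
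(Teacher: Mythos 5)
Your proposal is correct and follows essentially the same route as the paper's proof in Appendix \ref{Sec:App4}: rewrite (\ref{BFDR_rule}) as $2(1-\Phi(c_B))=\frac{\alpha}{f(1-\alpha)}(1-t_2)$, identify the limit of $c_B/\sqrt n$, pass to the limit in the power by dominated convergence (which is exactly where the no-atom hypothesis enters), apply the normal tail approximation, take logarithms to get (\ref{BFDR2_general}), and read off (\ref{BFDR_opt}) from Theorem \ref{TH2_general}. Your algebra for the additive constant and for translating (\ref{optcv1_general})--(\ref{optcv2_general}) into the two conditions of (\ref{BFDR_opt}) checks out, and is in fact more explicit than the paper, which at that point simply defers to \citep{Bog2009}. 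The one place you diverge is the step you yourself flag as the main obstacle, namely ruling out $x_n$ exceeding $\sqrt{2C_0}$: the paper does this by a direct contradiction argument, bounding the power from below by $(\nu(-\infty,-K)+\nu(K,\infty))(1-\Phi(\sqrt n(u_n^B-K)/\sigma))$ for an arbitrary $K$, applying the tail approximation to both numerator and denominator, and taking $n$-th roots to contradict $(\alpha/f)^{1/n}\rightarrow e^{-C_0}$; your alternative of exhibiting a comparison threshold $c^*$ at level $\sqrt n(\sqrt{2C_0}+\e)$ with $BFDR(c^*)<\alpha$ and invoking the monotonicity of Lemma \ref{final} is a viable substitute (the required inequality follows from the same lower bound on the power with $K$ just below $\sigma\sqrt{2C_0}$, where $\nu(|\mu|>K)\geq C_1>0$), but you leave it at the level of strategy rather than executing the estimate. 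Note also that Assumption (C) is not actually needed for this step outside the ABOS regime; the mass condition $\nu(-\sigma\sqrt{2C_0},\sigma\sqrt{2C_0})<1$ and the no-atom condition suffice, which is how the paper phrases it.
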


The proof is given in  Appendix \ref{Sec:App4}.
\ \\

\begin{cor} \label{Rem:sqrt_n}
{\rm
If in addition to the assumptions of Theorem \ref{TH_BFDR} also Assumption (B) holds  then  the fixed threshold rule with BFDR at the level  $\alpha \propto n^{-1/2}$ is ABOS.}
\end{cor}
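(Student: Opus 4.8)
The plan is to derive the statement directly from Theorem \ref{TH_BFDR}: since that theorem already characterises ABOS for BFDR rules, all that remains is to confirm that its standing hypotheses are met and that the two equivalent conditions in (\ref{BFDR_opt}) hold when $\alpha = K n^{-1/2}$ for a fixed constant $K>0$. The whole argument is therefore a verification, and the only real work is careful bookkeeping of the consequences of Assumption (B).

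First I would collect what Assumption (B) gives. Because $p\to 0$ we have $f=(1-p)/p\to\infty$ with $\log f\sim-\log p$, so $\log\delta=o(\log p)$ translates into $\log\delta=o(\log f)$; moreover, as noted after Assumption (B), $-2\log p/n\to C$ forces $\log f/n\to C/2$. Next I would check the preliminary hypotheses of Theorem \ref{TH_BFDR}. With $\alpha=Kn^{-1/2}$ one has $\alpha\to0=:\alpha_\infty<1$ and $\alpha\in(0,1-p)$ for large $n$, while $f/\alpha\to\infty$. Writing $\log(f/\alpha)=\log f+\tfrac12\log n-\log K$ shows $\log(f/\alpha)/n\to C/2=:C_0$, and since $\sigma\sqrt{2C_0}=\sigma\sqrt C=T$, Assumption (C) (a positive bounded density of $\nu$ near $\pm T$) supplies both $\nu(-T,T)<1$ and the absence of atoms at $\pm T$. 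Hence (\ref{con3_general}) holds with $C_0=C/2$.

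It then remains to verify (\ref{BFDR_opt}), and here the key observation is that $\alpha\sqrt n=K$ is constant. This makes the second condition immediate: $2\log(\alpha\delta\sqrt n)=2\log K+2\log\delta$ is bounded above because $\delta$ is bounded above, while $\log\log(f/\alpha)\to\infty$, so the difference tends to $-\infty$. For the first condition, $\log(f\delta\sqrt n)-\log(f/\alpha)=\log\delta+\log K$, and dividing by $\log(f/\alpha)\to\infty$ yields a ratio tending to $1$ exactly because $\log\delta=o(\log(f/\alpha))$. The latter is where I expect the only mild subtlety: one must confirm that the extra $\tfrac12\log n$ inside $\log(f/\alpha)$ does not destroy the relation $\log\delta=o(\log f)$ coming from Assumption (B); this is harmless since $\log(f/\alpha)\ge\log f\to\infty$, so $|\log\delta|/\log(f/\alpha)\le|\log\delta|/\log f\to0$. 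With both parts of (\ref{BFDR_opt}) established, Theorem \ref{TH_BFDR} applies and gives that the rule is ABOS.
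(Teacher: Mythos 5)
Your verification is correct and is exactly the argument the paper leaves implicit: the corollary is stated without proof as a direct consequence of Theorem \ref{TH_BFDR}, and your check that $\alpha\sqrt n$ is constant makes both parts of (\ref{BFDR_opt}) immediate (the first from $\log\delta=o(\log f)\le o(\log(f/\alpha))$, the second from $\delta$ bounded above and $\log\log(f/\alpha)\to\infty$), while Assumption (C) at $\pm T$ with $C_0=C/2$ covers the side conditions in (\ref{con3_general}). Nothing is missing.
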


\begin{cor} \label{Rem:alpha_const}
{\rm
If in addition to the assumptions of Theorem \ref{TH_BFDR} and Assumption (B) also  $\delta \rightarrow 0$ and $ n \propto -\log p$ then the fixed threshold rule with BFDR equal to $\alpha\in (0,1)$ is ABOS. It is not possible that a BFDR controlling rule is ABOS when both $\alpha$ and $\delta$ are constant.}
\end{cor}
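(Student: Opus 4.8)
The plan is to reduce everything to the \textbf{if and only if} characterization of ABOS furnished by (\ref{BFDR_opt}) in Theorem~\ref{TH_BFDR}: for the first assertion I would \emph{verify} both limit conditions in (\ref{BFDR_opt}), and for the impossibility assertion I would \emph{contradict} the second one. First I would record the elementary asymptotics forced by the hypotheses. Under Assumption~(B) together with $n \propto -\log p$, say $n \sim \kappa(-\log p)$ with $\kappa>0$, the remark following Assumption~(B) gives $-2\log p/n \to C$ with $C = 2/\kappa > 0$, so Assumption~(A) holds with $C>0$. Since $f=(1-p)/p$ we have $\log f \sim -\log p \sim n/\kappa$, and because $\log(1-p)=o(1)$ also $\log n = \log\kappa + \log\log f + o(1)$, whence $\tfrac12\log n \sim \tfrac12\log\log f = o(\log f)$. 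Finally $\log\delta = o(\log p) = o(\log f)$ by Assumption~(B).

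For the first assertion I would check the standing hypotheses of Theorem~\ref{TH_BFDR} and then both parts of (\ref{BFDR_opt}). Since $\alpha\in(0,1)$ is constant, $\alpha\to\alpha_\infty=\alpha<1$, eventually $\alpha<1-p$, and $f/\alpha\to\infty$; moreover $\log(f/\alpha)/n \to 1/\kappa = C/2 =: C_0 \in(0,\infty)$, so (\ref{con3_general}) holds. As $\sigma\sqrt{2C_0} = \sigma\sqrt{C} = T$ and Assumption~(C) grants a positive density of $\nu$ on a neighbourhood of $\pm T$, one gets $\nu(-T,T)<1$ and no atoms at $\pm T$, as required. For the first condition in (\ref{BFDR_opt}), $\log(f\delta\sqrt n) = \log f + \log\delta + \tfrac12\log n = \log f\,(1+o(1))$ and $\log(f/\alpha)=\log f\,(1+o(1))$, so their ratio tends to $1$. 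For the second condition, write $2\log(\alpha\delta\sqrt n) - \log\log(f/\alpha) = 2\log\alpha + 2\log\delta + \log n - \log\log(f/\alpha)$; the decisive point is that $\log n - \log\log(f/\alpha) \to \log\kappa$ is finite, so the whole expression equals $2\log\delta + O(1)$, which tends to $-\infty$ precisely because $\delta\to 0$. Both conditions of (\ref{BFDR_opt}) then hold and the rule is ABOS.

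For the impossibility assertion I would argue by contradiction within the framework of Theorem~\ref{TH_BFDR}. Suppose $\alpha$ and $\delta$ are both constant and the BFDR rule is ABOS; then the second condition in (\ref{BFDR_opt}) holds. With $\alpha,\delta$ constant one has $2\log(\alpha\delta\sqrt n) = \log n + O(1)$ and $\log\log(f/\alpha) \sim \log\log f$, so that condition forces $\log n - \log\log f \to -\infty$, i.e. $n/\log f \to 0$. This is incompatible with Assumption~(A): for constant $\delta$, $2\log(\delta f)/n \to C<\infty$ gives $\log f/n \to C/2$, so $n/\log f$ tends to the positive limit $2/C$ (or to $\infty$ when $C=0$) and cannot go to $0$. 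Hence no such rule can be ABOS.

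The genuinely routine steps are the verification of (\ref{con3_general}) and of the first condition in (\ref{BFDR_opt}). The part to get right — and the hinge of the whole argument — is the exact cancellation $\log n - \log\log(f/\alpha)\to\log\kappa$ in the second condition, which isolates $2\log\delta$ as the \emph{only} source of divergence to $-\infty$; this cancellation rests on the precise order $n\sim\kappa(-\log p)$. It is also what makes the impossibility transparent: freezing $\delta$ replaces $2\log\delta$ by a constant, so the required divergence collapses to $n=o(\log f)$, which the finiteness of $C$ in Assumption~(A) forbids.
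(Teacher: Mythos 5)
Your proposal is correct and follows exactly the route the paper intends: the corollary is stated as an immediate consequence of the if-and-only-if characterization (\ref{BFDR_opt}) in Theorem \ref{TH_BFDR}, and your verification — isolating $2\log\delta$ via the cancellation $\log n - \log\log(f/\alpha)\to\log\kappa$ for the positive assertion, and contradicting the finiteness of $C$ in Assumption (A) for the impossibility — is precisely the routine check the paper omits.
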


\begin{rem}
{\rm Based on (\ref{BFDR2_general}) straight forward calculations yield the asymptotic type I error of the BFDR rule
\beq \label{BFDR_t1}
t_1^B = \frac{C_1\alpha}{(1 - \alpha_\infty) f} (1 + o_n) \; .
\eeq
}
\end{rem}

The BFDR controlling rules discussed in this section require the knowledge of some of the parameters  of the unknown mixture distribution and therefore they are not applicable in practice. However, the results on ABOS of the BFDR controlling rules can be used to prove  ABOS of some popularly used multiple testing rules, like the Bonferroni correction or the Benjamini--Hochberg procedure. Asymptotic optimality results of these rules will be presented in the following sections.

\subsection{Bonferroni correction} \label{Sec:Bonf}

In  applied sciences the most popular multiple testing procedure is still the fixed threshold rule of Bonferroni correction.  In our setting its critical value $c_{Bon}$ for the test statistic $\frac{\sqrt n |\bar X_i|}{\s}$ is defined  by
$$
1 - \Phi(c_{Bon}) = \frac{\alpha}{2 m}\; .
$$
The procedure controls the family wise error rate at level $\alpha$. The following lemma specifies the conditions for $\alpha$ under which
the Bonferroni procedure is ABOS.

\begin{lem}\label{L1}
Suppose Assumptions (A), (C) and sparsity condition (\ref{as_spars}) hold.
 The Bonferroni procedure  at  FWER level $\alpha_n$  is ABOS if $\alpha_n$ satisfies the assumptions of Theorem \ref{TH_BFDR}.
\end{lem}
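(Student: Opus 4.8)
The plan is to view the Bonferroni rule as a symmetric fixed-threshold rule and to verify directly the characterization of ABOS in Theorem \ref{TH2_general}. Since the rule rejects $H_{0i}$ exactly when $\sqrt n\,|\bar X_i|/\sigma > c_{Bon}$, its critical values are $\tilde a_n = -\sigma c_{Bon}/\sqrt n$ and $\tilde b_n = \sigma c_{Bon}/\sqrt n$, so that $\frac{n\tilde a_n^2}{\sigma^2} = \frac{n\tilde b_n^2}{\sigma^2} = c_{Bon}^2$ and the two exponents $z_a,z_b$ in (\ref{optcv_general}) coincide. It therefore suffices to put $z := c_{Bon}^2 - \log v$ and to check the two requirements (\ref{optcv1_general}) and (\ref{optcv2_general}), namely $z = o(\log v)$ and $z + 2\log\log v\to\infty$. (A direct ``threshold equality'' reduction to Theorem \ref{TH_BFDR} is not quite available, since the Bonferroni threshold involves $m$ rather than $f$; these differ by $2\log(mp)$, which is $o(\log v)$ but need not be an $o_n$, so verifying Theorem \ref{TH2_general} is the robust route.)

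First I would compute $c_{Bon}^2$ asymptotically. Starting from $1-\Phi(c_{Bon}) = \alpha/(2m)$, inserting the Mills-ratio expansion $1-\Phi(c) = \phi(c)c^{-1}(1+o(1))$, taking logarithms and solving the resulting fixed-point relation (using $c_{Bon}^2\sim 2\log(m/\alpha)$ to evaluate the $\log c_{Bon}^2$ term) gives
\begin{equation*}
c_{Bon}^2 = 2\log(m/\alpha) - \log\log(m/\alpha) + O(1).
\end{equation*}
This is structurally identical to the BFDR threshold (\ref{BFDR2_general}) with $f$ replaced by $m$, which is precisely the source of the restriction to extreme sparsity.

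Next I would translate this into $z = c_{Bon}^2 - \log v$ using the elementary identities $\log(f\delta\sqrt n) = \tfrac12\log v$ and $\log m = \log(mp) + \log f + o(1)$ (the latter from $1/p = 1+f$). A short rearrangement yields
\begin{equation*}
z = 2\log(mp) - 2\log(\alpha\delta\sqrt n) - \log\log(m/\alpha) + O(1).
\end{equation*}
Now the hypotheses enter. The sparsity condition (\ref{as_spars}) gives $\log(mp) = o(\log v)$ and $mp\to s>0$, while the first relation in the ABOS conditions (\ref{BFDR_opt}) (part of what $\alpha_n$ must satisfy) gives $\log(f/\alpha)\sim\tfrac12\log v$. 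The latter both forces $-2\log(\alpha\delta\sqrt n) = 2\log(f/\alpha) - \log v = o(\log v)$ and, via $\log(m/\alpha) = \log(mp) + \log(f/\alpha)\sim\tfrac12\log v$, identifies $\log\log(m/\alpha) = \log\log v + O(1)$. Together these make every summand of $z$ an $o(\log v)$, establishing (\ref{optcv1_general}).

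The delicate step, and the main obstacle, is the boundary condition (\ref{optcv2_general}): the nested-logarithm bookkeeping has to be done carefully enough that the $O(1)$ remainders (from the Mills-ratio expansion and from replacing $\log(f/\alpha)$ by $\log(m/\alpha)$) do not corrupt a statement that only barely diverges. Adding $2\log\log v$ and using $\log\log(m/\alpha) = \log\log v + O(1)$ reduces it to showing $2\log(mp) - 2\log(\alpha\delta\sqrt n) + \log\log v\to\infty$. Here the second relation in (\ref{BFDR_opt}), $2\log(\alpha\delta\sqrt n) - \log\log(f/\alpha)\to-\infty$, is exactly what is needed: since $\log\log(f/\alpha) = \log\log v + O(1)$, it yields $-2\log(\alpha\delta\sqrt n) \ge -\log\log v + M_n$ with $M_n\to\infty$, so the expression is bounded below by $2\log(mp) + M_n + O(1)$, which diverges because $mp$ stays bounded away from $0$. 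With both (\ref{optcv1_general}) and (\ref{optcv2_general}) verified, Theorem \ref{TH2_general} yields that the Bonferroni rule is ABOS.
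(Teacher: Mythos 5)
Your argument is correct and follows essentially the same route as the paper's: both expand $c_{Bon}^2 = 2\log(m/\alpha) - \log\log(m/\alpha) + O(1)$, observe that this differs from the ABOS BFDR threshold by $2\log(mp)+O(1)$, and then verify the two conditions of Theorem \ref{TH2_general} using the sparsity condition (\ref{as_spars}) for (\ref{optcv1_general}) and the lower bound on $\log(mp)$ for (\ref{optcv2_general}). You merely unpack the paper's one-line appeal to the optimality of the $c_B$-rule into a direct verification via the conditions (\ref{BFDR_opt}), which is a cosmetic rather than substantive difference.
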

\begin{proof}
If $m\rightarrow \infty$ then the threshold for the Bonferroni correction can be written as
$$
c^2_{Bon}=2\log\left(\frac{m}{\alpha}\right)-\log\left(2\log\left(\frac{m}{\alpha}\right)\right)+\log(2/ \pi)+o_n\;\;.
$$
 Comparison of this threshold with  the asymptotic approximation to an optimal BFDR control rule (\ref{BFDR_thresh}) and (\ref{BFDR2_general}) yields
$$
c^2_{Bon}=c^2_B+2\log m p + O_n(1) \;\;.
$$
From (\ref{as_spars}) it follows easily that $c^2_{Bon} = c^2_{B}(1+o_n)$.
By assumption, the rule based on the threshold $c^2_{B}$ is optimal, and hence $c^2_{Bon}$ satisfies condition (\ref{optcv1_general}) of Theorem \ref{TH2_general}. Condition (\ref{optcv2_general}) is satisfied, since by assumption $\log m p$ is bounded from below and thus ABOS of the Bonferroni correction follows.
\end{proof}


\subsection{FDR  controlling procedures}  \label{Sec:FDR}


The Benjamini--Hochberg rule \citep{BH}, which we will also call  step-up FDR controlling procedure,
is defined as follows:   For the square of the scaled test statistics
$Z_i^2 = \frac{n \bar X_i^2}{\sigma^2}$ one computes two-sided p-values $p_i = 2(1 - \Phi(|Z_i|))$ which are then ordered $p_{[1]} \leq  p_{[2]} \leq \dots \leq p_{[m]}$. For the step-up procedure at the FDR level $\alpha$  compute
\begin{equation}\label{step-up}
k_F := \max\left\{i:\; p_{[i]} \leq \frac{i \alpha}{m} \right\}
\end{equation}
and reject the $k_F$  hypothesis with p-values smaller or equal $p_{[k_F]}$. In view of the proof of ABOS for FDR controlling model selection criteria in Section \ref{Sec:mBIC2}  we will not only consider the step-up procedure, but also the corresponding step-down procedure at level $\alpha$. For this compute
\begin{equation}\label{step-down}
k_G := \min \left\{i:\; p_{[i]} > \frac{i \alpha}{m} \right\}
\end{equation}
and reject the $k_G - 1$  hypotheses with p-values smaller than $p_{[k_G]}$. It is well known, that in practice both procedures behave very similar (see \citep{A}).

Optimality results for the step-up FDR controlling rule were proven in \citep{Bog2009} under the assumption of $\mu_i$ being normally distributed. A crucial step was the definition of a random threshold for the BH rule
$$
c_{BH}=\min\{c_{Bon}, \tilde c_{BH}\}\;\;.
$$
with
\begin{equation}\label{cBH}
\tilde c_{BH}=\inf\left\{y:\frac{2(1-\Phi(y))}{1-\check F_m(y)}\leq \alpha\right\}\;\;.
\end{equation}
Here $1-\check F_m(y) = \# \{|Z_i|\geq y\}/m$.
Alternatively let us denote $1-\hat F_m(y) = \# \{|Z_i| > y\}/m$.
Similar as in case of BH it is easy to check  that SD rejects the null hypothesis $H_{0i}$ if and only if $Z_i^2\geq  c^2_{SD}$ where
\begin{equation}\label{cSD}
 c_{SD}=\sup\left\{y:\frac{2(1-\Phi(y))}{1-\hat F_m(y)+ 1/m} > \alpha\right\}\;\;.
\end{equation}

It was proven by Genovese and Wassermann (GW) in \citep{GW1} that for fixed $p$, as the number of tests increases, the random threshold $c_{BH}$ can be approximated by the non-random threshold
\begin{equation}\label{GW2}
c_{GW}: \frac{2(1-\Phi(c_{GW}))}{1-F(c_{GW})}=\alpha\;\;,
\end{equation}
where $F(y) = P(|Z_1| \leq y)$.

\begin{center}
\begin{figure}[t]
\begin{minipage}[t]{4.6cm}
\begin{center}
\leavevmode  \epsfxsize=6.5cm \epsffile{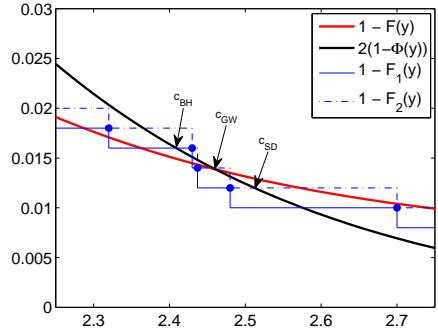}
\end{center}
\end{minipage}
\caption{Comparison of the  random thresholds $c_{BH}$ and $c_{SD}$ with the nonrandom threshold $c_{GW}$. In the legend $F_1$ refers to $\check F_m$ and $F_2$ refers to $\hat F$.} \label{Fig:thresholds}
\end{figure}
\end{center}

Figure \ref{Fig:thresholds} illustrates the thresholds $c_{BH}$,  $c_{SD}$
and $c_{GW}$.
Comparing $\tilde c_{BH}$ and $c_{SD}$ with $c_{GW}$ the major change is in replacing the cumulative distribution function of $|Z_i|$ by the corresponding empirical distribution function. In \citep{Bog2009} it was shown that also in case of sparsity
$c_{BH}$ can be well approximated by  $c_{GW}$, and in Lemma \ref{LSD2} of Appendix \ref{ApSec:Th2.4} we will see that the same is true for $c_{SD}$. A much simpler result is that under sparsity the difference between $c_{GW}$  and the corresponding BFDR controlling threshold $c_{B}$ becomes asymptotically negligible.
\begin{thr}\label{TH:GW}
Suppose Assumptions (A) and (C) are true and that $p\rightarrow 0$. Consider the rule rejecting the null hypothesis $H_{0i}$ if $\frac{n \bar X_i^2}{\sigma^2}\geq c^2_{GW}$. This rule is ABOS if and only if the corresponding BFDR controlling rule defined in (\ref{BFDR_rule}) (for the same $\alpha=\alpha_n$) is ABOS. In this case we have
 $$
 c^2_{GW}=c^2_{B}+o_n\;\;.
 $$
\end{thr}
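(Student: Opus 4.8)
The plan is to recognise $c_{GW}$ as a BFDR--controlling threshold for a level that differs from $\alpha$ only by the factor $1-p$, and then to use that the BFDR threshold reacts to its level only through $\log(f/\alpha)$. First I would rewrite the denominator in (\ref{GW2}). Writing $t_1(y)=2(1-\Phi(y))$ for the two--sided type I error and $1-t_2(y)$ for the power of the symmetric rule with threshold $y$ on $|Z_i|$, the survival function of $|Z_1|$ under (\ref{barX}) is the marginal rejection probability
\begin{equation*}
1-F(y)=(1-p)\,t_1(y)+p\,(1-t_2(y))\;.
\end{equation*}
Thus the left--hand side of (\ref{GW2}) equals $\mathrm{BFDR}(y)/(1-p)$, so the defining relation for $c_{GW}$ is exactly $\mathrm{BFDR}(c_{GW})=(1-p)\alpha$; that is, $c_{GW}$ solves (\ref{BFDR_rule}) with $\alpha$ replaced by $\alpha'=(1-p)\alpha$, while $c_B$ solves it at level $\alpha$.

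The second step is to compare the two thresholds, which I would do directly from their defining relations so as to stay within the stated hypotheses (A), (C) and $p\to0$. These relations say that $t_1/(1-t_2)$ equals $\frac{p\alpha}{(1-p)(1-\alpha)}$ at $c_B$ and $\frac{p\alpha}{1-\alpha(1-p)}$ at $c_{GW}$; since $p\to0$ forces both constants to $0$, both thresholds tend to infinity, and the ratio of the two constants is $\frac{1-\alpha(1-p)}{(1-p)(1-\alpha)}=1+o_n$. Hence $\log\big(t_1/(1-t_2)\big)$ takes values differing by $o_n$ at $c_B$ and $c_{GW}$. A mean value argument combined with Mills' ratio $t_1'/t_1=-\phi/(1-\Phi)\sim -y$ and with $(1-t_2)'/(1-t_2)=o(y)$ gives $(\log(t_1/(1-t_2)))'\sim-y$ at the common order of the thresholds, so $|c_B-c_{GW}|=o_n/\xi$ for an intermediate $\xi\sim c_B$; multiplying by $c_B+c_{GW}\sim2\xi$ yields $c_{GW}^2=c_B^2+o_n$. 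When the stronger conditions (\ref{con3_general}) of Theorem \ref{TH_BFDR} are in force the same conclusion drops out faster: $\alpha'=(1-p)\alpha$ also satisfies those conditions and $\log(f/\alpha')=\log(f/\alpha)+o_n$, so inserting both levels into the expansion (\ref{BFDR2_general}) makes the leading, the $\log\!\big(2\log(\cdot)\big)$ and the constant terms agree up to $o_n$.

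The ABOS equivalence is then immediate. Both rules are fixed--threshold rules for $|Z_i|$, so by Theorem \ref{TH2_general} each is ABOS precisely when its squared threshold has the form $\log v+z$ with $z=o(\log v)$ and $z+2\log\log v\to\infty$. Writing $c_B^2=\log v+z_B$ gives $c_{GW}^2=\log v+z_B+o_n$, and since $\log v\to\infty$ the $o_n$ discrepancy affects neither $z=o(\log v)$ nor $z+2\log\log v\to\infty$; hence (\ref{optcv1_general}) and (\ref{optcv2_general}) hold for one threshold if and only if they hold for the other. Therefore the GW rule is ABOS if and only if the BFDR rule at the same $\alpha$ is ABOS, and in that case $c_{GW}^2=c_B^2+o_n$.

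I expect the main obstacle to lie in the logarithmic--derivative control underlying the direct comparison: one must verify, in both regimes $C>0$ and $C=0$, that $(1-t_2)'/(1-t_2)$ is of smaller order than the Mills term $-y$ near the threshold. Here Assumption (C) is used to guarantee that $1-t_2$ stays bounded away from $0$ there (its limit is $C_1>0$, equal to $1$ when $C=0$), while $(1-t_2)'$ is uniformly bounded because it is an average of Gaussian densities; the domination must moreover be uniform over the whole interval between $c_B$ and $c_{GW}$ for the mean value estimate to apply, and it is the pairing of this $o_n/\xi$ gap with the factor $c_B+c_{GW}\sim2\xi$ that keeps the squared--threshold error at $o_n$. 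If instead the expansion (\ref{BFDR2_general}) is invoked, the only point needing care is the routine verification that $\alpha'=(1-p)\alpha$ still meets the hypotheses of Theorem \ref{TH_BFDR}, which holds since $p\to0$.
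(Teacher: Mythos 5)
Your proposal is correct and follows essentially the same route as the paper, which simply defers to the proof of Theorem 4.2 of Bogdan et al.: the key observation in both is that $1-F(y)=(1-p)t_1(y)+p(1-t_2(y))$, so that $c_{GW}$ is exactly the BFDR threshold at level $(1-p)\alpha$, after which the comparison with $c_B$ at level $\alpha$ reduces to the $o_n$ perturbation of $\log(f/\alpha)$ and the transfer of the ABOS characterization via Theorem \ref{TH2_general}. Your write-up in fact supplies the mean-value/Mills-ratio details that the paper leaves to the cited reference.
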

\begin{pro}
The proof of this statement follows exactly as the proof of Theorem 4.2 of \citep{Bog2009}.
\end{pro}

The next theorem provides the optimality result of BH and SD for generally distributed effect sizes under the alternative.

\begin{thr} \label{TH:BH}
Apart from Assumptions (A) and (C)  assume that
\begin{equation}\label{pm}
m p\rightarrow s \in (0,\infty]
\end{equation}
 and
\begin{eqnarray}\label{alpha}
\alpha \mbox{ satisfies the conditions of Theorem \ref{TH_BFDR},  } \label{alpha2}
\\ \nonumber
\mbox{i.e the BFDR control rule at level $\alpha$ is asymptotically optimal.}
\end{eqnarray}
 For the denser case
\beq \label{dense}
p > \frac{\log^{\gamma_1} m}{m} \;\;,\mbox{ for some constant } \gamma_1 > 1
\eeq
the additional assumptions
\begin{equation}\label{ularge}
 n \leq m^{\gamma_2} ,\;\;\;\mbox{for some $\gamma_2 > 0$ and }\;\; \frac{\log \log m}{\log (p\ \alpha)} \rightarrow 0
\end{equation}
should hold.
Then both BH and SD are ABOS.
\end{thr}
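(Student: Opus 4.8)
The plan is to reduce ABOS of the two random-threshold procedures to the already established ABOS of the deterministic GW rule. By assumption (\ref{alpha2}) the BFDR controlling rule at level $\alpha$ is ABOS, so Theorem \ref{TH:GW} immediately yields that the rule rejecting when $\frac{n\bar X_i^2}{\sigma^2} \geq c_{GW}^2$ is ABOS and, moreover, that $c_{GW}^2 = c_B^2 + o_n$. In view of the characterization of ABOS fixed-threshold rules in Theorem \ref{TH2_general}, it therefore suffices to show that the random thresholds $c_{BH}$ and $c_{SD}$ are, with probability $1 - o_n$, trapped in a band $[c_{GW}^2 - \epsilon_n,\ c_{GW}^2 + \epsilon_n]$ of squared thresholds with $\epsilon_n = o_n$, and then to verify that the corresponding random risk still satisfies $R/R^B \to 1$.

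First I would establish the concentration of the empirical thresholds around $c_{GW}$. The defining equations (\ref{cBH}) and (\ref{cSD}) differ from the GW equation (\ref{GW2}) only in that the survival function $1 - F$ is replaced by $1 - \check F_m$ (for BH) or by $1 - \hat F_m + 1/m$ (for SD). The idea is to control the fluctuations of the empirical counts $\#\{|Z_i| \geq y\}$ around their mean $m(1 - F(y))$, uniformly over $y$ in a neighborhood of $c_{GW}$, by a Bernstein- or DKW-type inequality, and then to transfer this control to the crossing point that defines the threshold. This is precisely the step carried out for $c_{BH}$ in \citep{Bog2009} and, for $c_{SD}$, in Lemma \ref{LSD2} of Appendix \ref{ApSec:Th2.4}. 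The built-in bound $c_{BH} \leq c_{Bon}$ from the definition $c_{BH} = \min\{c_{Bon}, \tilde c_{BH}\}$ (together with ABOS of Bonferroni, Lemma \ref{L1}) furnishes the deterministic control from above that prevents the step-up threshold from degenerating in the very sparse case. In the denser regime (\ref{dense}) the supplementary hypotheses (\ref{ularge}), namely $n \leq m^{\gamma_2}$ and $\log\log m / \log(p\alpha) \to 0$, are exactly what is needed to keep the logarithmic correction terms in the threshold expansion of smaller order than $\log v$, so that the inherited thresholds continue to satisfy (\ref{optcv1_general}) and (\ref{optcv2_general}).

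To turn threshold concentration into a risk statement I would sandwich the random threshold between two deterministic ones. On the good event $G_n = \{c_-^2 \leq c_{BH}^2 \leq c_+^2\}$, where $c_\pm^2 = c_{GW}^2 \pm \epsilon_n$ with $\epsilon_n = o_n$, monotonicity of the rejection rule in the threshold dominates the false rejections of BH by those at the lower threshold $c_-$ and its missed signals by those at the higher threshold $c_+$, so that, after taking expectations, the risk contribution of $G_n$ is at most $R_1(c_-) + R_2(c_+)$. Since $c_-$ and $c_+$ both satisfy the ABOS characterization of Theorem \ref{TH2_general} and differ only by $o_n$ in squared threshold, one checks that $R_1(c_-) + R_2(c_+) = R^B(1 + o_n)$. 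On $G_n^c$ the risk is bounded crudely by the total available loss and multiplied by $\p(G_n^c) = o_n$, making that contribution negligible relative to $R^B$. Replacing $c_{BH}$ by $c_{SD}$ and invoking Lemma \ref{LSD2} gives the step-down case verbatim. The extreme-sparsity regime $mp \to s < \infty$ from (\ref{pm}) and the denser regime (\ref{dense}) are treated in parallel, the latter requiring the extra hypotheses (\ref{ularge}).

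The main obstacle is the concentration step in the sparse regime. Because the per-test errors enter the risk through Gaussian tails, the risk is exponentially sensitive to $c^2$, so nothing coarser than an $o_n$ error in $c_{BH}^2$ can be tolerated; yet as $p \to 0$ the handful of genuine signals makes the empirical survival function $\check F_m$ fluctuate strongly, in relative terms, precisely in the tail region where the crossing in (\ref{cBH}) occurs. Controlling these fluctuations uniformly over the narrow band of candidate thresholds, while ensuring that the crossing defining $c_{BH}$ (resp. $c_{SD}$) survives them, is the technically demanding core of the argument, and is the reason the regime-dependent conditions (\ref{dense}) and (\ref{ularge}) must be imposed.
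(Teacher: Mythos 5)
Your overall architecture (approximate the random thresholds by deterministic ones that are already known to be ABOS, then transfer the risk bound) is the same as the paper's, and you correctly identify Lemma \ref{LSD2}, the built-in bound $c_{BH}\leq c_{Bon}$, and the role of the conditions in (\ref{ularge}). However, there is a genuine gap at the heart of your concentration step. You require the random squared thresholds to lie in a band $c_{GW}^2\pm\epsilon_n$ with $\epsilon_n=o_n$, and you assert that ``nothing coarser than an $o_n$ error in $c_{BH}^2$ can be tolerated.'' Both halves of this are wrong. In the sparse regime (\ref{pm}) with $mp\to s<\infty$ the number of rejections is an $O_P(1)$ random variable, so the step-up/step-down crossing point fluctuates by amounts of order at least $O_P(1)$, and in the denser regime by amounts of order $\log\log m$, on the squared-threshold scale; a two-sided $o_n$ concentration is simply not available, and a proof built on it would fail. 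Conversely, Theorem \ref{TH2_general} shows that ABOS tolerates a perturbation $z=o(\log v)$ of the squared threshold together with the one-sided condition $z+2\log\log v\to\infty$; this is exactly the slack the paper exploits. Its Lemma \ref{LSD2} does not compare $c_{SD}$ with $c_{GW}$ at level $\alpha$ but with the GW threshold at the more stringent level $\alpha(\log m)^{-s}$, a shift of order $\log\log m$ in $c^2$ which is $o(\log v)$ precisely because of the second condition in (\ref{ularge}); and the bound obtained is one-sided, $P(c_{SD}\geq\tilde c_1)\leq m^{-\gamma_u}$, not a two-sided band.

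The second thing you miss is the structural device that makes one-sided bounds sufficient: BH is more liberal than SD, so the type I risk of SD is dominated by that of BH and the type II risk of BH is dominated by that of SD. The paper therefore only needs (i) an upper bound on the type I risk of BH (handled as in Lemma 5.4 of \citep{Bog2009}, using $n\leq m^{\gamma_2}$), and (ii) an upper bound on the type II risk of SD (by comparison with Bonferroni in the extremely sparse case, and via Lemma \ref{LSD2} together with the crude bound $\delta_A m^{1-\gamma_u}=o(R^B)$ on the exceptional event in the denser case). Your symmetric sandwich treats both error types for both procedures and consequently demands the two-sided concentration that, as explained above, does not hold. The plan can be repaired by replacing the $\pm o_n$ band with one-sided comparisons against deterministic thresholds that are themselves ABOS but shifted by $o(\log v)$, and by invoking the liberal/conservative ordering; as written, the proposal does not go through.
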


\begin{pro}
BH is more liberal than SD, thus it is enough to control the risk contribution of Type 1 error  for BH, as well as the risk contribution of Type 2 error for SD.  Under the first condition in (\ref{ularge}) the proof for Type 1 error of BH follows  along the same lines as the proof of Lemma 5.4 in \citep{Bog2009}. Also, under the condition of extreme sparsity (\ref{as_spars}) according to Lemma \ref{L1} the Bonferroni procedure is ABOS. Therefore the optimality of the type II error component of the risk of SD in the extremely sparse case follows directly from a comparison with the
more conservative Bonferroni correction.
Finally, the necessary bound of the type II error component of the risk of SD  for the denser case (\ref{dense}) is provided in  Appendix \ref{ApSec:Th2.4}. This proof substantially relies on the second condition in (\ref{ularge}).
\end{pro}

\begin{rem}
{\rm The upper bound on $m$ provided in the second condition of (\ref{ularge}) is not very restrictive. Specifically, it is satisfied whenever $p\propto  m^{-\beta}$ with $\beta\in(0,1]$. For $p$ decreasing to 0 at a slower rate (for example like $(\log m)^{-1}$) one can replace this bound with the condition
\begin{equation}\label{sim_con}
n\geq  m^{\gamma_3}\;\;\mbox{ for some}\;\; \gamma_3>0\;\;.
\end{equation}
(It is easy to show that (\ref{sim_con}) implies the upper bound on $m$ in (\ref{ularge}) given the other assumptions of  Theorem \ref{TH:BH}) .
}
\end{rem}

The following Corollaries are easy consequences of Theorem \ref{TH:BH}.

\begin{cor}\label{cor1}
Suppose Assumptions (A) and (C) hold. If $p=m^{-\beta}$ with $\beta\in(0,1]$, $n\leq m^{\gamma_2}$ for some $\gamma_2>0$ and $\delta$ is bounded from above  such that $\log \delta=o(\log m)$ then BH and SD at FDR level $\alpha \propto n^{-1/2}$ are ABOS.
\end{cor}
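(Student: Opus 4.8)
The plan is to verify that the hypotheses of the corollary force all the conditions required by Theorem \ref{TH:BH}, so that its conclusion applies verbatim. Since Assumption (C) is assumed outright and Assumption (A) is the standing hypothesis throughout Section \ref{Sec:MultipleTest} (in particular $n\to\infty$), the work reduces to checking Assumption (B), the sparsity condition \eqref{pm}, the BFDR-optimality condition \eqref{alpha}, and, in the denser regime, the extra conditions \eqref{ularge}.

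First I would establish Assumption (B). With $p = m^{-\beta}$ we have $p\to 0$ and $\log p = -\beta\log m$, so the hypothesis $\log\delta = o(\log m)$ is precisely $\log\delta = o(\log p)$; together with $\delta$ bounded above and $n\to\infty$ this gives (B). Condition \eqref{pm} is then immediate, since $mp = m^{1-\beta}$ tends to $\infty$ for $\beta\in(0,1)$ and equals $1$ for $\beta = 1$, so in either case $mp\to s\in(0,\infty]$.

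The central step is to confirm condition \eqref{alpha}, namely that $\alpha\propto n^{-1/2}$ makes the BFDR controlling rule ABOS; this is exactly the content of Corollary \ref{Rem:sqrt_n}, but I would also check the two clauses of Theorem \ref{TH_BFDR} directly. Writing $\alpha = c_0 n^{-1/2}$, the growth condition \eqref{con3_general} holds because $f/\alpha = m^\beta n^{1/2}/c_0\,(1+o_n)\to\infty$, while $\log(f/\alpha)/n\to C/2$, giving $C_0 = C/2$; here I use $\log f\sim\beta\log m$, $\log n/n\to 0$, and Assumption (A). Assumption (C) then guarantees $\nu(-T,T)<1$ with no atoms at $\pm T$, since $\nu$ carries a positive density on $[T-\e,T+\e]$. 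For the optimality condition \eqref{BFDR_opt}, one notes that $\log(f\delta\sqrt n)$ and $\log(f/\alpha)$ differ only by $\log\delta$ plus a bounded term, hence by an $o(\log m)$ amount, while both diverge like $\beta\log m$, so their ratio tends to $1$; and $2\log(\alpha\delta\sqrt n) = 2\log(c_0\delta)$ is bounded above whereas $\log\log(f/\alpha)\to\infty$, so the second expression tends to $-\infty$.

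It remains to handle the two regimes of Theorem \ref{TH:BH}. For $\beta = 1$ one has $p = 1/m < \log^{\gamma_1}m/m$, so this is the extremely sparse case and \eqref{ularge} is not invoked. For $\beta\in(0,1)$ we have $p = m^{-\beta} > \log^{\gamma_1}m/m$ eventually, putting us in the denser case \eqref{dense}, so I must verify \eqref{ularge}: the bound $n\le m^{\gamma_2}$ is assumed, and $\log\log m/\log(p\alpha)\to 0$ because $\log(p\alpha) = -\beta\log m - \tfrac12\log n + O_n(1)$ has magnitude at least $\beta\log m$, which dominates $\log\log m$. With every hypothesis of Theorem \ref{TH:BH} in force, BH and SD at level $\alpha\propto n^{-1/2}$ are ABOS. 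The only mildly delicate point is the bookkeeping underlying \eqref{alpha}: one must keep $\log\delta$ inside the $o(\log m)$ error budget so that it does not interfere with the leading $\beta\log m$ behaviour of $\log(f/\alpha)$; the remaining checks are direct substitutions.
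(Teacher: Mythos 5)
Your proposal is correct and follows essentially the same route as the paper, which offers no written proof beyond noting that the corollary is an easy consequence of Theorem \ref{TH:BH}; your detailed verification of Assumption (B), condition (\ref{pm}), the hypotheses of Theorem \ref{TH_BFDR} (including $C_0=C/2$ and the role of Assumption (C) in ensuring $\nu(-T,T)<1$ with no atoms at $\pm T$), and the split between the extremely sparse case $\beta=1$ and the denser case requiring (\ref{ularge}) is exactly the intended argument. The only cosmetic imprecision is the claim that $\log(f/\alpha)$ ``diverges like $\beta\log m$'' --- it diverges like $\beta\log m+\tfrac12\log n$, which may be of larger order when $\log n\asymp\log m$ --- but since the difference from $\log(f\delta\sqrt n)$ is still $o(\log m)$ while the denominator is at least of order $\log m$, the conclusion that the ratio tends to $1$ stands.
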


\begin{cor}\label{cor2}
Suppose Assumptions (A) and (C) hold. If $p=m^{-\beta}$ with $\beta\in(0,1]$, $n\propto \log m$ and $\delta$ converges to zero such that $\log \delta=o(\log m)$ then BH and SD at a fixed FDR level $\alpha\in (0,1)$ are ABOS.
\end{cor}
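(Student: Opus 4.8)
The plan is to obtain the result purely as a verification of the hypotheses of Theorem \ref{TH:BH}, since the corollary is asserted to be one of its consequences. Under the stated assumptions three things must be checked: the sparsity condition (\ref{pm}); that the fixed level $\alpha$ satisfies condition (\ref{alpha}) (equivalently, that the fixed-$\alpha$ BFDR controlling rule is ABOS); and, in the denser regime $\beta<1$, the supplementary requirements (\ref{ularge}). Throughout I would first record the elementary asymptotics forced by $p=m^{-\beta}$ and $n\propto\log m$: writing $n=K\log m$ with $K>0$, we have $f=(1-p)/p\sim m^{\beta}$, hence $\log f\sim\beta\log m$, while $\log n=\log\log m+O(1)$ and, by hypothesis, $\log\delta=o(\log m)$ with $\delta\to 0$. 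In particular $\tfrac{2\log(\delta f)}{n}\to C=2\beta/K$ and $\tfrac{\log(f/\alpha)}{n}\to C_0=\beta/K$, so $C_0=C/2$ and $\sigma\sqrt{2C_0}=\sigma\sqrt C=T$; since $\beta>0$ we are in the regime $0<C<\infty$.

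Condition (\ref{pm}) is immediate, because $mp=m^{1-\beta}\to\infty$ for $\beta\in(0,1)$ and $mp=1$ for $\beta=1$, so $mp\to s\in(0,\infty]$ in either case. The crux is condition (\ref{alpha}). Here I would first confirm the $\alpha$-hypotheses of Theorem \ref{TH_BFDR}: $\alpha\in(0,1-p)$ holds eventually since $p\to 0$; $\alpha\to\alpha_\infty=\alpha<1$; $f/\alpha\to\infty$; and $\tfrac{\log(f/\alpha)}{n}\to C_0<\infty$ as computed. The side conditions on $C_0$ in (\ref{con3_general}) then follow from Assumption (C): because $C_0=C/2$ forces $\sigma\sqrt{2C_0}=T$, Assumption (C) supplies a positive density near $\pm T$ (whence $\nu(-T,T)<1$) and rules out atoms at $\pm T$. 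It remains to establish ABOS of the rule, i.e.\ (\ref{BFDR_opt}). The cleanest route is to invoke Corollary \ref{Rem:alpha_const}: Assumption (B) holds here ($n\to\infty$, $p\to 0$, $\delta\to 0$ hence bounded, and $\log\delta=o(\log m)=o(\log p)$ as $\log p\sim-\beta\log m$), and $-\log p=\beta\log m$ gives $n\propto-\log p$, so with $\delta\to 0$ the corollary applies verbatim. Alternatively one checks (\ref{BFDR_opt}) directly: the first limit holds since $\log(f\delta\sqrt n)=\log f+\log\delta+\tfrac12\log n\sim\beta\log m\sim\log(f/\alpha)$, and in the second the terms $\log n$ and $\log\log(f/\alpha)$ are each $\log\log m+O(1)$ and cancel, leaving $2\log(\alpha\delta\sqrt n)-\log\log(f/\alpha)=2\log\delta+O(1)\to-\infty$.

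It then remains to separate the two regimes. For $\beta<1$ we are in the denser case (\ref{dense}), as $p=m^{-\beta}>\log^{\gamma_1}m/m$ eventually for any $\gamma_1>1$ because $m^{1-\beta}$ outgrows every power of $\log m$; the conditions (\ref{ularge}) then hold, since $n\propto\log m\le m^{\gamma_2}$ eventually for any $\gamma_2>0$, and $\log(p\alpha)=-\beta\log m+O(1)$ yields $\log\log m/\log(p\alpha)\to 0$. For $\beta=1$ we have $p=1/m$, which violates (\ref{dense}); we are then in the extremely sparse regime, where Theorem \ref{TH:BH} requires only (\ref{pm}) and (\ref{alpha}), both already secured (internally this case is handled through the Bonferroni comparison of Lemma \ref{L1}, whose sparsity hypothesis (\ref{as_spars}) is met). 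In either regime Theorem \ref{TH:BH} delivers ABOS of both BH and SD.

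The only genuinely delicate point, and the one I expect to be the main obstacle, is the second limit in (\ref{BFDR_opt}). Everything hinges on the exact cancellation $\log n-\log\log(f/\alpha)=O(1)$, which is precisely what $n\propto\log m$ (equivalently $n\propto-\log p$) provides, together with the $-\infty$ drift $2\log\delta$ supplied by $\delta\to 0$. Were $\delta$ instead held fixed, this limit would fail, in line with the closing assertion of Corollary \ref{Rem:alpha_const} that no BFDR rule can be ABOS when $\alpha$ and $\delta$ are both constant. I would therefore take care to emphasize that the assumption $\delta\to 0$ is doing essential work and is not a mere convenience.
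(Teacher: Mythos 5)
Your proposal is correct and follows exactly the route the paper intends: the paper offers no explicit argument beyond declaring the corollary ``an easy consequence of Theorem \ref{TH:BH}'', and your verification of (\ref{pm}), of the hypotheses of Theorem \ref{TH_BFDR} together with (\ref{BFDR_opt}) (or equivalently the invocation of Corollary \ref{Rem:alpha_const} via $n\propto-\log p$ and $\delta\to0$), and of (\ref{ularge}) in the denser case $\beta<1$ is precisely the routine checking that was left to the reader. Your closing observation that the cancellation $\log n-\log\log(f/\alpha)=O(1)$ plus the drift $2\log\delta\to-\infty$ is where the hypotheses $n\propto\log m$ and $\delta\to0$ do their essential work is accurate and consistent with the paper's remark that no BFDR rule is ABOS when $\alpha$ and $\delta$ are both constant.
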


\begin{rem}
{\rm Corollary (\ref{cor1}) states that under some mild restrictions on $\delta$ BH and SD at the FDR level $\alpha \propto n^{-1/2}$ are ABOS. Corollary (\ref{cor2}) says that in case when $n\propto \log m$ then under the additional requirement that $\delta\rightarrow 0$, BH and SD at the fixed FDR level $\alpha \in (0,1)$ are also ABOS. This result substantially extends the results of \citep{Bog2009} to the case where the prior on $\mu_i$ is fixed and not necessarily normal, while the sample size $n$ slowly increases to infinity. This additionally justifies the use of the fixed FDR level for BH in many applications, like e.g. in bioinformatics, where $n$ is much smaller than $m$. As discussed in \citep{Bog2009} the condition $\delta \rightarrow 0$ is quite reasonable in this context, since the cost of missing a true positive is usually large if $p$ is very small. }
\end{rem}


\section{ABOS in the context of multiple regression} \label{Sec:Regression}


\noindent
It is well known \citep{Bog2009, FG}  that there is a strong connection between model selection for multiple regression and multiple testing rules.  Under the simplified assumption of an orthogonal design matrix and known variance of the error term the two problems actually become identical. Consider a multiple linear regression model
$$
Y_{n\times 1}=X_{n\times (m+1)}\beta_{(m+1)\times
1}+\epsilon_{n\times 1}\;\;,
$$
where the first column in the design matrix consists of ones and
$\epsilon \sim N \left(0, \sigma^2
I_{n \times n}\right)$.
Let us additionally assume that
\begin{equation}\label{as1}
X'X=nI_{(m+1)\times (m+1)}\;\;,
\end{equation}
and that the regression coefficients
$\beta_{1},\ldots,\beta_{m}$ can be modelled as independent
random variables from the following mixture distribution
\begin{equation}\label{as2}
(1-p)d_{0}+p \nu \;.
\end{equation}
Under the assumptions (\ref{as1}) and (\ref{as2}) least squares
estimates $\hat \beta_i$, $1\leq i \leq m$, are independent random
variables from the mixture distribution
\begin{equation}\label{mixlin}
(1-p)N\left(0,\frac{\sigma^2}{n}\right)+p \left(\nu * N\left(0,\frac{\sigma^2}{n}\right)\right)\;\;.
\end{equation}
This is identical with (\ref{barX}) and thus the problem of detecting true regressors is equivalent to the
multiple testing problem. Therefore, in case
when each false positive (falsely detected regressor) induces the
cost $\delta_0$ and each false negative induces the cost $\delta_A$,
thresholds of the Bayes rule and the optimal Bayes risk are obtained just like in Lemma \ref{LemF2} and in Theorem \ref{TH1_general}.

As mentioned in the introduction we will focus here on the case of Assumption (B), where the loss ratio has no particular influence on the asymptotic results.   In this case $\frac{2 \log f \delta}{n}= -\frac{2\log p} {n}(1+o_n)$.  We also consider only sparsity parameters $p\rightarrow 0$ satisfying  assumption (\ref{pm}). Since under orthogonal designs $m < n$ one has  $-\log p=O(\log m) = O(\log n)$, and finally  $ -\frac{2\log p (1+o_n)}{n} \rightarrow 0$. Thus,   under orthogonal designs assumptions (B) and (\ref{pm}) imply Assumption (A) with $C=0$. Therefore we will  refrain from referring to  Assumption (A) in this section.

We  will first discuss a model selection criterion which is ABOS in case of extreme sparsity
(\ref{as_spars}), as in Corollary \ref{Cor_optcv}. However, it is easy to see that for $m \leq n$
that sparsity assumption reduces to
\beq\label{as_spars0}
m p \rightarrow s \in (0,\infty], \quad  \frac{\log (m p)}{\log n} \rightarrow 0\;.
\eeq

\subsection{ABOS of mBIC  when $\sigma$ is known} \label{Sec:mBIC}

It was shown in \citep{BGD} in the context of QTL mapping that for large $m$ classical model selection criteria like AIC or BIC tend to select too large models. Based on Bayesian ideas a modified version of BIC (mBIC) was proposed to take into account the number of available regressors.
When $\sigma$ is known the mBIC criterion suggests choosing the model $M$
for which
\beq\label{mBIC_pen}
\frac{RSS_M}{\sigma^2}+k(\log n + 2\log m+d)\;\;
\eeq
 obtains a minimum, where $RSS_{M}$ refers to the residual sum of squares for model $M$, $k = k(M)$ is the number of regressors in the model and $d$ is a certain constant. A comprehensive introduction into the ideas leading to mBIC is given in \cite{BGZ}. 

\begin{rem} \label{Choice_of_d}
{\rm
It follows from the derivation of mBIC that from a Bayesian perspective  $\exp(-d/2)$ is the a priori expected number of regressors. If there is no prior knowledge on the model size the recommended standard choice is $d=-2\log(4)$, which guarantees  control of FWER at level 0.1 for $n \geq 200$ and $m \geq 10$. For further details see \cite{BGZ}.
} 
\end{rem}

 Apart from ABOS we  want to show consistency of mBIC.

\vspace{2mm}
{\bf \noindent Definition.} A model selection rule is said to be consistent if the probability of selecting the true model converges to 1 as $m\rightarrow \infty$.

\begin{thr} \label{TH3}
  Consider the orthogonal regression model specified by the conditions (\ref{as1}) and (\ref{as2}) and let assumptions (B) and (C) (with C=0) hold. Under (\ref{as_spars0}) mBIC is  ABOS, while under the considerably weaker assumption \beq \label{as3}
m p \rightarrow s \in (0,\infty], \quad  mp \sqrt{\frac{\log n}{n}} \rightarrow 0
\eeq
mBIC is consistent.
\end{thr}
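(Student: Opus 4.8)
The plan is to reduce mBIC to a fixed-threshold multiple testing rule, so that the ABOS claim becomes a direct application of Corollary \ref{Cor_optcv}, and then to prove consistency separately by a first-moment (Markov) bound on the numbers of false positives and false negatives.

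\textbf{Reduction and ABOS.} First I would use the orthogonality $X'X = nI$. For every submodel the restricted least squares coefficients coincide with the full-model estimates $\hat\beta_i$, so $RSS_M = \|Y\|^2 - \sum_{i \in M} n\hat\beta_i^2$, and the mBIC objective (\ref{mBIC_pen}) splits additively as $\|Y\|^2/\sigma^2 + \sum_{i \in M}\big(\log n + 2\log m + d - n\hat\beta_i^2/\sigma^2\big)$. Its minimiser therefore includes regressor $i$ precisely when $n\hat\beta_i^2/\sigma^2 > c^2$, with $c^2 = \log(nm^2) + d$. Since the $\hat\beta_i$ follow the mixture (\ref{mixlin}), which is identical to (\ref{barX}), mBIC is exactly the symmetric fixed-threshold rule of Section \ref{Sec:Fixed} with $c_a^2 = c_b^2 = \log(nm^2) + d$. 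Now Assumptions (B) and (\ref{pm}) (both contained in (\ref{as_spars0})) give Assumption (A) with $C = 0$, and for $m \le n$ condition (\ref{as_spars}) coincides with (\ref{as_spars0}); the threshold is of the form (\ref{optcv_pm}) with the constant perturbation $\xi = d$. Corollary \ref{Cor_optcv} then yields ABOS at once, since a constant $\xi$ satisfies its hypothesis whenever $mp \to s \in (0,\infty]$ keeps $\log(mp)$ bounded below.

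\textbf{Consistency.} Let $M_0 = \{i : \beta_i \ne 0\}$ and write $\#FP$, $\#FN$ for the numbers of falsely selected nulls and missed signals. A union bound and Markov's inequality give $P(\hat M \ne M_0) \le E[\#FP] + E[\#FN]$. For false positives, $E[\#FP] = m(1-p)\,t_1$ with $t_1 = 2(1 - \Phi(c))$; the Mills ratio together with $e^{-c^2/2} = e^{-d/2}/(m\sqrt n)$ gives $t_1 \sim \sqrt{2/\pi}\,e^{-d/2}/(c\,m\sqrt n)$, hence $E[\#FP] = O(1/(c\sqrt n)) \to 0$ using only $n \to \infty$. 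For false negatives, $E[\#FN] = mp\int t_2(\mu)\,d\nu(\mu)$ with $t_2(\mu) = \Phi(c - \sqrt n\mu/\sigma) - \Phi(-c - \sqrt n\mu/\sigma)$. Substituting $y = \sqrt n\mu/\sigma$ and splitting at a fixed small $\eta$, the part $|\mu| > \eta$ is at most $\Phi(c - \sqrt n\eta/\sigma)$ and is exponentially negligible, while on $|\mu| \le \eta$ the boundedness of $\rho$ near $0$ from Assumption (C) together with the identity $\int_{\mathbb R}[\Phi(c - y) - \Phi(-c - y)]\,dy = 2c$ gives $\int t_2\,d\nu = O(c/\sqrt n) = O(\sqrt{\log n / n})$, because $c^2 = \log(nm^2) + d = O(\log n)$ for $m \le n$. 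Thus $E[\#FN] = O\big(mp\sqrt{\log n/n}\big) \to 0$ under (\ref{as3}), and consistency follows.

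\textbf{Main obstacle.} The ABOS half is essentially a citation of Corollary \ref{Cor_optcv}; the genuine work is the false-negative estimate $\int t_2\,d\nu = O(\sqrt{\log n/n})$, which is where Assumption (C) (with $C = 0$) enters. This estimate also explains why consistency needs only the weaker (\ref{as3}) while ABOS requires (\ref{as_spars0}): ABOS forces the mismatch $c^2 - \log v \approx 2\log(mp) - 2\log\delta + d$ to be $o(\log v)$, hence $\log(mp) = o(\log n)$, whereas consistency merely needs both expected error counts to vanish, which tolerates $mp$ growing up to order $\sqrt{n/\log n}$.
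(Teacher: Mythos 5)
Your proposal is correct and follows essentially the same route as the paper: reduce mBIC under orthogonality to the fixed-threshold rule $n\hat\beta_i^2/\sigma^2 > \log(nm^2)+d$, obtain ABOS by citing Corollary \ref{Cor_optcv} with constant $\xi = d$, and prove consistency via a first-moment (Markov) bound using $t_1 = O\bigl(1/(m\sqrt{n\log n})\bigr)$ and $t_2 = O\bigl(\sqrt{\log n/n}\bigr)$, the latter being exactly the content of the paper's Lemma \ref{Lem:consistency} in Appendix \ref{Sec:ApCons}. Your explicit use of the identity $\int_{\mathbb R}[\Phi(c-y)-\Phi(-c-y)]\,dy = 2c$ to bound the type II error is a slightly more self-contained version of the paper's appeal to the computation leading to (\ref{t2_general}), but it is the same argument in substance.
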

\begin{pro}
It is easy to check that under assumption (\ref{as1})  mBIC  suggests choosing
those regressors for which
$$ \frac{n\hat\beta_j^2} {\sigma^2}>\log n+2\log m+d\;\;.$$
From Corollary \ref{Cor_optcv} one immediately concludes that under the sparsity assumption (\ref{as_spars0})   this selection rule is ABOS.

To prove consistency of mBIC let the random variable $M_j$ be Bernoulli distributed  where a misclassification of predictor $X_j$ denotes a success. If $t_1$ and $t_2$ denote the probability of type I and type II error of mBIC, respectively, then for sufficiently large $n$
$$
P(M_j = 1) = (1-p)t_1+p t_2 \leq K p  \sqrt{\frac{\log n}{n} }
 $$
for some constant $K$, where the last inequality is shown in Appendix \ref{Sec:ApCons}.
Using Markov's inequality the probability of picking the wrong model (which is the probability of at least one wrong misclassification) can thus be bounded like
\beq \label{Markov}
 P(\sum\limits_{j=1}^m M_j  \geq 1) \leq  E\left( \sum\limits_{j=1}^m M_j \right) \leq  K mp \sqrt{\frac{\log n}{n}}\; ,
\eeq
which according to  (\ref{as3}) converges to 0.
\end{pro}

\begin{rem}
{\rm
Theorem \ref{TH3}   addresses the situation of sparsity, where the expected number of true signals remains constant or slowly increases with  $m$. The assumption $mp \rightarrow s < \infty$  was used when deriving the  mBIC penalty in \cite{BGD}.
 Theorem \ref{TH3} actually tells us that mBIC remains optimal when the number of true signals is mildly growing, for example $m p = \log m$ is still conceivable.
This scenario might be more realistic in many applications, where one would hope that by increasing the number of markers one could actually be able to detect more true signals.  However, the situation described is still very sparse, which is one motivation  to introduce in Section \ref{Sec:mBIC2}  criteria which are slightly less restrictive.

}\end{rem}

\begin{rem}
{\rm Note that  under the assumption $mp \rightarrow s < \infty$ the expected value of the number of false positives $EP$ produced by the standard BIC is
equal to
$EP=m(1-p)t_1=\frac{m}{\sqrt{n \log n}}(1+o_{n,m})$. Thus BIC is not consistent when $\lim_{n\rightarrow \infty}\frac{m}{\sqrt{n \log n}}>0$.
}
\end{rem}

\begin{rem}
{\rm Another interesting situation arises for distributions  $\nu$
for which there exists an open interval  including 0 such that $\nu(-l,r) = 0$
 (cf. \citep{JT}).  It can be shown that in this situation the mBIC rule is  not  optimal anymore, although its risk still converges to 0.
}\end{rem}

\subsection{Modifications of BIC controlling FDR}  \label{Sec:mBIC2}

As shown in  \citep{BGZ} there exists a close connection between mBIC penalty and the Bonferroni correction for multiple testing. In a recent paper \citep{A} Abramovich et al. have been discussing extensively penalized model selection schemes which control the false discovery rate. Their starting point is the close relationship between step-up and step-down FDR controlling procedures at level $\alpha$ and the following penalizing scheme:  For models of size $k$  define the selection criterion
\beq\label{FDR_proc}
\frac{RSS_M}{\sigma^2} + \sum\limits_{l=1}^k q_N^2(\alpha l/2 m) \;,
\eeq
where $q_N(\eta)$ is the $(1-\eta)$ - quantile of the standard normal distribution. It can be shown quite easily that the size of models selected by this procedure is larger or equal $k_G$ and smaller or equal $k_F$ (see \citep{A}). The procedure is therefore  nested between BH and SD, and from Theorem \ref{TH:BH} it immediately follows that it is also ABOS.

We will adopt approximations of the FDR penalization (\ref{FDR_proc}) to amend BIC. A simple argument involving the normal tail approximation shows that
$$
q_N^2(\alpha l/2 m) \sim 2\log(m/l) -  \log[ 2\log(m /\alpha l)] + \log(2/\pi) - 2\log \alpha\;.
$$
In view of Corollary \ref{Rem:sqrt_n} we are mainly interested in the case where $\alpha \propto n^{-1/2}$ which leads to the criterion
\beq \label{mBIC1_pen}
\mbox{mBIC1:} \quad \frac{RSS_M}{\sigma^2}+k(\log (n m^2) + d_1) - 2\log(k!) - \sum_{i=1}^k \log \log(nm^2/i^2) \;\;.
\eeq
Here the constant $d_1$ can be chosen appropriately  to control FDR at a given level.  
Neglecting the last term of the mBIC1 penalty, which is of a lower order than the two preceding terms,   leads to the following simplified form of (\ref{mBIC1_pen}),
\beq \label{mBIC2_pen}
\mbox{mBIC2:} \quad \frac{RSS_M}{\sigma^2}+k(\log (n m^2) + d_2) - 2\log(k!)  \;.
\eeq
This might be thought of as a first order approximation of the FDR penalization, whereas mBIC1 is a second order approximation.
Interestingly, the penalty in mBIC2 is very similar to a modification of RIC introduced in \citep{GF}, with additional penalty term
$$
 2 \sum_{i=1}^k \log(m/i) = k \log(m^2) -2 \log(k!) \;\;,
$$
which was motivated by an empirical Bayes approach. 

 Abramovich et al. consider  in \citep{A} the approximation $\sum_{l=1}^{k} q_N^2(\alpha l/2 m) \sim k q_N^2(\alpha k/2 m)$, which can be justified by using the Sterling approximation for $k!$. The resulting first order criterion has the form
\beq \label{mBIC3_pen}
\mbox{mBIC3:} \quad \frac{RSS_M}{\sigma^2}+k(\log (n m^2) + d_3) - 2k\log(k) \;.
\eeq
Compared with (\ref{mBIC2_pen}) this means essentially that  $\log(k!)$ is substituted by $\log(k^k)$.

\begin{rem}\label{Choice_of_d2}
{\rm
In the simulation study of Section \ref{Simulation}  the constant of mBIC1 is chosen as $d_1=0$, which guarantees control of FDR at a level below $0.06$ for sample size $n$ larger than 200.
For mBIC2 the constant $d_2 = -2\log(4)$ is used, which coincides with the recommended standard choice of $d$ for mBIC. For moderate $m$ and $n$  as in the simulation study  mBIC1 with $d_1 = 0$ and mBIC2 with $d_2 = -2\log(4)$ have rather similar penalties for small $k$.  In case of mBIC3 the Sterling approximation leads to $d_3 =  d_2 + 2$. 
}
\end{rem}

\begin{thr} \label{TH:modBIC}
Consider the orthogonal regression model specified by the conditions (\ref{as1}) and (\ref{as2}). Let Assumptions  (B) and (C) as well as (\ref{pm}) be true. For the denser case (\ref{dense}) the additional condition (\ref{ularge}) is assumed to hold. Then the
rules mBIC1, mBIC2 and mBIC3 are ABOS.   The rules are consistent under the additional assumption (\ref{as3}).
\end{thr}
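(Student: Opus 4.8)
The plan is to reduce each of mBIC1, mBIC2 and mBIC3 to a rank-dependent thresholding rule, recognise it as an FDR penalization of the form (\ref{FDR_proc}) at a suitable level $\alpha_j\propto n^{-1/2}$ (up to slowly varying factors), and then inherit ABOS from Theorem \ref{TH:BH} through the nesting between the step-up and step-down procedures. Under the orthogonality (\ref{as1}) with known $\sigma$ each penalty is additive in the model size, $\mathrm{Pen}(k)=\sum_{l=1}^{k}c_l$. Writing $Z^2_{[1]}\ge Z^2_{[2]}\ge\cdots$ for the ordered statistics $n\hat\beta_j^2/\sigma^2$, minimising $RSS_M/\sigma^2+\mathrm{Pen}(k)$ is the same as minimising $\sum_{l=1}^{k}\bigl(c_l-Z^2_{[l]}\bigr)$ over $k$; exactly as for (\ref{FDR_proc}), the optimal model size then lies between the step-down and step-up counts determined by comparing $Z^2_{[l]}$ with the cutoff $c_l$, so each criterion is nested between the associated step-down and step-up rules. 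Reading off the increments gives $c_l^{(1)}=\log(nm^2)+d_1-2\log l-\log\log(nm^2/l^2)$ for mBIC1, $c_l^{(2)}=\log(nm^2)+d_2-2\log l$ for mBIC2, and, after a Stirling expansion of $2k\log k$ together with the choice $d_3=d_2+2$ from Remark \ref{Choice_of_d2}, $c_l^{(3)}=c_l^{(2)}+o(1)$ for mBIC3.

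Next I would match these increments to the FDR cutoffs $q_N^2(\alpha l/2m)$ using the tail expansion $q_N^2(\alpha l/2m)\sim 2\log(m/l)-\log[2\log(m/\alpha l)]+\log(2/\pi)-2\log\alpha$ recorded just before (\ref{mBIC1_pen}). One finds that $c_l^{(1)}$ agrees with $q_N^2(\alpha_1 l/2m)$, up to a bounded per-term error, for $\alpha_1\propto n^{-1/2}$; dropping the $\log\log$ term in mBIC2 shifts the implied level to $\alpha_2\propto n^{-1/2}\bigl(\log(nm^2)\bigr)^{-1/2}$, and $\alpha_3\propto\alpha_2$. Since the remaining per-term discrepancies are either bounded (mBIC1) or of order $\log\log(nm^2)$ and essentially constant over the relevant range of ranks, each mBIC rule is squeezed between the FDR penalizations, hence between the step-down and step-up rules, at two levels $\alpha_j^-,\alpha_j^+$ of the same order as $\alpha_j$. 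I would then verify directly that $\alpha_j$ meets the hypotheses (\ref{alpha}) of Theorem \ref{TH:BH}, i.e. the BFDR conditions (\ref{BFDR_opt}): under Assumption (B) one has $f\propto 1/p$ and $\log\delta=o(\log p)$, so $\log(f\delta\sqrt n)/\log(f/\alpha_j)\to1$ and $2\log(\alpha_j\delta\sqrt n)-\log\log(f/\alpha_j)\to-\infty$, the extra $\sqrt{\log}$ factor in $\alpha_2,\alpha_3$ being absorbed because it perturbs $\log(f/\alpha)$ only at order $\log\log$.

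With (\ref{pm}) and, in the denser regime (\ref{dense}), the assumption (\ref{ularge}), Theorem \ref{TH:BH} then yields that BH and SD at each level $\alpha_j^\pm$ are ABOS. The bracketing gives $R_1^{\mathrm{mBIC}_j}\le R_1^{\mathrm{BH}(\alpha_j^+)}$ and $R_2^{\mathrm{mBIC}_j}\le R_2^{\mathrm{SD}(\alpha_j^-)}$; since $\alpha_j^-$ and $\alpha_j^+$ are of the same order, their thresholds differ by $o(\log v)$ in $n c^2/\sigma^2$, so by the very argument underlying the proof of Theorem \ref{TH:BH} the two components combine to $R_1^{\mathrm{BH}}+R_2^{\mathrm{SD}}\sim R^B$, whence $R^{\mathrm{mBIC}_j}/R^B\to1$.

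Finally, for consistency under (\ref{as3}) I would copy the proof of Theorem \ref{TH3}: the leading cutoff $c_1$ of each criterion satisfies $c_1^2\sim\log(nm^2)$, the same order as for mBIC, so the per-regressor misclassification probability again obeys $P(M_j=1)\le Kp\sqrt{\log n/n}$, and Markov's inequality as in (\ref{Markov}) gives $P(\sum_{j}M_j\ge1)\le Kmp\sqrt{\log n/n}\to0$. The main obstacle is the quantitative control in the two middle steps: one must show that the lower-order differences between the mBIC penalties and the exact FDR penalty only move the induced level $\alpha_j$ inside the logarithmically wide admissible band of Theorem \ref{TH_BFDR}, and that bracketing the procedure between two asymptotically equivalent BH and SD rules is enough to conclude ABOS of the middle rule — which in turn requires knowing that the Type I and Type II risk components change only at lower order when the threshold $n c^2/\sigma^2$ is shifted by $o(\log v)$.
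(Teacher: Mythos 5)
Your proposal is correct and follows essentially the same route as the paper's proof: both reduce each criterion to rank-dependent cutoffs, translate them via the normal tail approximation into p-value thresholds $p_{[k]}\leq \alpha k/m$ with an effective level $\alpha_1\propto n^{-1/2}$ for mBIC1 and $\alpha_{2},\alpha_3\propto (n\log n)^{-1/2}$ for mBIC2/mBIC3, sandwich each rule between step-up and step-down procedures at two levels of the same order satisfying (\ref{BFDR_opt}), and invoke Theorem \ref{TH:BH}. The consistency argument is likewise the same in substance (ABOS plus the Markov bound (\ref{Markov})), differing only in presentation.
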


The proof is given in  Appendix \ref{Sec:App5}.
\ \\

\begin{rem}
{\rm
The FDR controlling selection rules mBIC1 - mBIC3 are ABOS under much less restrictions on the sparsity levels than mBIC.   However, conditions for consistency are exactly the same. Actually given the other assumptions of Theorem \ref{TH:modBIC} it follows that (\ref{as3})  is also necessary for the Bayes rule to be consistent.
 }\end{rem}


\subsection{ABOS of mBIC when $\sigma$ is unknown}  \label{Sec:mBIC_su}


We have seen that for known $\s$ and under the simplifying assumption of an orthogonal design matrix, the problem of model selection using mBIC in multiple regression is equivalent to multiple testing, in the sense that a regressor is included in the model chosen by mBIC if and only if the corresponding square of the sample regression coefficient is larger than a fixed threshold. In case of unknown $\s$ the situation gets much more complicated and no such direct connection with multiple testing can be established. We are only interested in the comparison of models which include the intercept. In this case the Bayesian Information Criterion chooses that model which minimizes $BIC = n\log RSS_{M}+ k \log n$.  The corresponding version of mBIC becomes
\begin{equation}\label{mBIC}
mBIC = n\log RSS_{M}+k(\log n+2 \log m+d)\;\;.
\end{equation}
 Our main goal is to show that also in  case of unknown $\s$ mBIC is asymptotically optimal.

Some problem occurs when (\ref{mBIC}) is used as a selection criterion for very large models. To be able to estimate the parameters of a model $M$ we need the restriction that $k \leq n-2$. But if $k$ is getting close to $n$ then overfitting will lead to extremely small $\log RSS_{M}$,  and the global minimum of (\ref{mBIC}) is likely to be attained by models of maximum size $k = n-2$ (if that many regressors are available).  It can be ruled out that such models are correct under the assumption of sparsity.  To cope with this pathology  we will restrict $L$, the maximal number of regressors to be allowed in addition to the common intercept term, by
\begin{equation}\label{as4}
L=o\left(\frac{n}{(\log n+2\log m)^2 \log m}\right)  \mbox{ as }n \rightarrow \infty\;\;.
\end{equation}
On the other hand to bound the type II error it is necessary to search among sufficiently large models, and we require the lower bound
\beq \label{asL}
L \geq m p (\log n)^{1+\eta} \; \mbox{ for some } \eta > 0 \mbox{ and all sufficiently large }n \; .
\eeq

\begin{thr} \label{TH:mBIC_unknown_sigma}
Suppose as in Theorem \ref{TH3} that Assumptions  (B) and (C), (\ref{as1}), (\ref{as2})  hold.  Furthermore assume that  (\ref{as4}) and (\ref{asL})  are true.
Then  the mBIC criterion (\ref{mBIC}) is ABOS under (\ref{as_spars0}), and consistent under (\ref{as3}).
\end{thr}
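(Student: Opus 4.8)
The plan is to reduce the unknown-$\sigma$ criterion (\ref{mBIC}) to the known-$\sigma$ criterion already settled in Theorem \ref{TH3} by linearising the nonlinear term $n\log RSS_M$, and then to transfer the risk comparison. Under orthogonality (\ref{as1}) every intercept-inclusive model $M$ of size $k$ satisfies $RSS_M = \|Y\|^2 - n\sum_{j\in M}\hat\beta_j^2$, so adjoining a regressor $j$ lowers $RSS_M$ by exactly $n\hat\beta_j^2$. For known $\sigma$ this makes inclusion a clean threshold on $n\hat\beta_j^2/\sigma^2$; for unknown $\sigma$ the increment of (\ref{mBIC}) upon adding $j$ equals $n\log(1 - n\hat\beta_j^2/RSS_M) + (\log n + 2\log m + d)$, which couples each decision to the current $RSS_M$ and destroys the direct equivalence with multiple testing. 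The first step is therefore to localise: on a high-probability event I would show, using chi-square concentration together with the restriction (\ref{as4}), that for every admissible model (size at most $L$) one has $RSS_M = n\sigma^2(1+o(1))$, and in particular that $\hat\sigma^2 := RSS_{M_0}/n \to \sigma^2$ for the true model $M_0$, whose size is $O_P(mp)$ under sparsity.

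Conditional on this good event I would Taylor-expand $n\log RSS_M$ about $n\sigma^2$,
\[
n\log RSS_M = n\log(n\sigma^2) + \frac{RSS_M - n\sigma^2}{\sigma^2} + r_M ,
\]
so that, up to a model-independent constant, the linear term is exactly the known-$\sigma$ objective $RSS_M/\sigma^2$. The crux is to bound the model-dependent variation of the second-order remainder $r_M$ uniformly over all $\binom{m}{k}$ models of size $k\le L$: writing the signal captured by a spurious set $S$ as $n\sum_{j\in S}\hat\beta_j^2 = \sigma^2\sum_{j\in S}W_j$ with $W_j$ standard chi-squares, a union bound caps this quantity uniformly, and the resulting variation of $r_M$ turns out to be of order $L\,(\log n + 2\log m)^2\log m/n$ (the extra $\log m$ coming from the uniformisation over candidate models). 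This is precisely the quantity that (\ref{as4}) forces to vanish. Once it is negligible, minimising (\ref{mBIC}) is asymptotically equivalent to minimising the known-$\sigma$ mBIC, and Corollary \ref{Cor_optcv} together with Theorem \ref{TH3} applies.

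To make the risk statement precise I would bound the two components of (\ref{risk1}) separately. For the type I (overfitting) contribution the decisive estimate is the union bound above, which shows that no set of spurious regressors permitted by $L$ can beat $M_0$, because the accumulated penalty $k(\log n + 2\log m + d)$ dominates the largest achievable reduction of $n\log RSS_M$; this is exactly where (\ref{as4}) is indispensable. For the type II (underfitting) contribution the lower bound (\ref{asL}) guarantees that the search ranges over models large enough to contain every true signal, while Assumption (C) and Lemmas \ref{LemF1}--\ref{LemF2} control the per-signal detection probability, yielding a bound matching the oracle type II risk of Theorem \ref{TH1_general}. Adding the two gives $R/R^B\to 1$ under (\ref{as_spars0}), i.e. ABOS. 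Consistency under the stronger assumption (\ref{as3}) then follows exactly as for Theorem \ref{TH3}: the per-coordinate misclassification probability is $(1-p)t_1 + pt_2 \le Kp\sqrt{\log n/n}$ with the linearised type I and type II probabilities, and Markov's inequality as in (\ref{Markov}) bounds the probability of any misclassification by $Kmp\sqrt{\log n/n}\to 0$.

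The main obstacle is this uniform control of the remainder $r_M$ coupled with the overfitting bound. The number of candidate spurious models of size up to $L$ is combinatorially large, and the nonlinearity of $\log RSS$ makes each inclusion decision depend on the global residual sum of squares rather than on a fixed threshold; reconciling these two facts is the delicate step that demands the sharp restriction (\ref{as4}) and that makes the proof technically involved, as noted in the introduction.
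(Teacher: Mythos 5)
Your overall architecture --- control overfitting by a union bound over the $\binom{m}{L}$ admissible spurious configurations using chi-square concentration of $RSS_M$ (where (\ref{as4}) enters), control underfitting using (\ref{asL}) to ensure the search space contains all signals with high probability, and deduce consistency by Markov's inequality as in (\ref{Markov}) --- matches the paper's strategy, which likewise proves separate per-coordinate bounds on $t_1$ and $t_2$ (Lemmas \ref{Lem:t1_suk} and \ref{Lem:t2_suk}). However, the pivot of your argument, the uniform approximation $RSS_M=n\sigma^2(1+o(1))$ over \emph{all} models of size at most $L$, is false: any model that misses true signals (the null model included) has $RSS_M$ inflated by the energy $n\sum\hat\beta_j^2$ of the missed signals, which under (\ref{as2}) with $mp\to s>0$ is of order $n$, not $o(n\sigma^2)$. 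Consequently the Taylor remainder of $n\log RSS_M$ about $n\sigma^2$ is of order $n$ for exactly the models that matter in the type II analysis, and the asserted reduction to the known-$\sigma$ criterion does not go through as stated. The paper never makes such a two-sided uniform claim: for type I it needs only the one-sided fact that $RSS_M$ stochastically dominates $\sigma^2\chi^2_{n-L}$ whatever signals are missed (so only the lower deviation $n\sigma^2/RSS_M-1\geq 1/c_{n,m}$ must be ruled out, via Corollary \ref{Feller} and the union bound over $\Omega_L$), and for type II it explicitly carries the missed-signal contribution $\sum_{j\leq r} n\hat\beta_{(j)}^2$ inside $RSS_{k-r}$ and absorbs it into a factor $(1-ru_{n,m})^{-1}=1+o(1)$ by exploiting the ordering $\hat\beta_{(j)}^2\leq\hat\beta_{(r)}^2$.

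A second gap: for ABOS (as opposed to consistency or rate-optimality) the type II bound must reproduce the oracle constant exactly, i.e. $t_2\leq \sigma(\rho(0^-)+\rho(0^+))\sqrt{(\log n+2\log m)/n}\,(1+o_{n,m})$, because the type II component dominates the Bayes risk; a bound $t_2=O(q)$ with any constant larger than one would give $R/R^B\to K>1$. Your sketch asserts that the bound ``matches the oracle type II risk'' but supplies no mechanism for the constant: the decision about a given true coordinate is coupled, through $RSS$, to which other signals are missed and to the random number $K$ of signals. The paper resolves this with the conditioning on $K$, Bennett's inequality for $P(K>L')$, the order statistics $\hat\beta_{(1)}^2\leq\cdots\leq\hat\beta_{(k)}^2$, the nested-events identity (\ref{nested}), and the binomial moment bounds (\ref{frac_moment}) showing $\sum_{r\geq 2}\Psi_r=o(q)$. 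This is the genuinely delicate part of the argument and is missing from your proposal.
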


The somewhat lengthy proof of this theorem is provided in Appendix \ref{ApSec:Th3.3}.

\begin{rem}
{\rm Note that except for the conditions (\ref{as4}) and (\ref{asL}) on the potential model size $L$ the assumptions for ABOS of mBIC in case of unknown $\sigma$ are exactly the same as in Theorem \ref{TH3} for known $\sigma$. We conjecture that similarly the results of Theorem \ref{TH:modBIC} concerning ABOS of the FDR controlling modifications of BIC should also hold in case of unknown $\sigma$. However, the techniques used for the proof of Theorem \ref{TH:mBIC_unknown_sigma} cannot easily be extended to mBIC1 - mBIC3.  We will come back to this point in the simulation study in the next section.
}\end{rem}

\section{Simulation results} \label{Simulation}

We employ computer simulations to investigate the performance of the proposed model selection rules for  multiple regression. For the sake of simple notation in this section $m$ denotes the number of regressors plus intercept.
We use orthogonal designs with $n = m$, where the design matrices $X_{m\times m}$ are chosen as  Hadamard matrices, whose elements are equal to 1 or -1.  For each of the simulation runs the number of nonzero regression coefficients $k^*$ was simulated from a binomial distribution $B\left(m,p\right)$.
Then the values of nonzero coefficients $\beta_1,\ldots,\beta_{k^*}$ were simulated  from a normal distribution $N(0,\tau^2)$, with  $\tau^2=0.9$. Finally the values of the response variable were simulated according to the multiple regression model
$$
Y_i=\sum_{j=1}^{k^*} \beta_j X_{ij}+\epsilon_j\;\;,
$$
where $\epsilon_j \sim N(0,1)$.
The specific value of the variance of regression coefficients $\tau^2=0.9$ is selected in such a way that  the power of the Bayes oracle for $m=256$ is in the range between 50\% and 60\%. This choice allows to assess differences in performance of the considered model selection rules.

In the first part of the simulation study we consider sparsity parameters  $p\in\{0.001, 0.005, 0.01, 0.02, 0.05, 0.1, 0.2\}$ and simulate for $m = 256$ as well as $m = 1024$. In the second part we will look at
a wider range of sample sizes $n = m \in \{128, 256, 512, 1024, 2048, 4096\}$, while the sparsity parameters are computed according to $p \propto m^{-\beta}$ for four different levels $\beta \in \{1, 1/2, 1/4, 1/8\}$.

 We compared the following model selection criteria:
\begin{enumerate}
\item The Bayes Oracle (\ref{JT1}) with $\delta_0=\delta_A$. This oracle is aimed at minimizing the expected number of wrongly classified regressors and in our setting includes those explanatory variables for which
\begin{equation}\label{break}
n\hat \beta_{i}^2>\frac{n\tau^2+1}{n\tau^2}\left(\log \left(n\tau^2+1\right)+2 \log \left( \frac{1-p}{p}\right)\right) \;\; .
\end{equation}
\item Modified versions of Bayesian information criterion:
\begin{enumerate}
\item mBIC:  (\ref{mBIC_pen})  with $d = -2\log 4$
\item mBIC1: (\ref{mBIC1_pen}) with $d_1 = 0$
\item mBIC2: (\ref{mBIC2_pen}) with $d_2 = -2\log 4$
\item mBIC3: (\ref{mBIC3_pen}) with $d_3 = -2\log 4 + 2$
\end{enumerate}
 The values of the constants are chosen according to Remark \ref{Choice_of_d} and Remark \ref{Choice_of_d2}.
\item Step up and step down FDR controlling procedures, (\ref{step-up}) and (\ref{step-down}) at FDR levels $\alpha=0.05$. These procedures test individually each of the regression coefficients based on simple regression models.
\end{enumerate}

Modified versions of BIC and FDR controlling procedures are investigated under two scenarios: when $\sigma$ is known and when it is unknown. In case when $\sigma$ is unknown modified versions of BIC are based on $n\log RSS_{M}$ instead of $\frac{RSS_M}{\sigma^2}$ (see (\ref{mBIC_pen}) and (\ref{mBIC})). For unknown $\sigma$ the FDR controlling procedures are based on t-tests instead of z-tests.

To identify the regression models,  which are  ``best'' with respect to our model selection criteria, we  start with ordering explanatory variables based on the results of simple regression t-tests. This procedure gives us the proper sequence of nested models,  since under the orthogonal design the estimate of a regression coefficient for a given explanatory variable does not depend on the other regressors included in the model. Then we compare  values of model selection criteria for these nested models, starting from the null model, with no explanatory variables, and ending with a model of dimension $k_{max}=0.3m$. The need for using the bound on the maximal number of components in  the considered models results from the fact that under our design the residual sum of squares for the full model is equal to 0. Therefore, in case of unknown $\sigma$, all modified versions of BIC are optimized by the full model (see the discussion before introducing assumption (\ref{as4})).
Despite of this, according to Theorem  \ref{TH:mBIC_unknown_sigma} and the results of \cite{CL} on the consistency of similar model selection rules, we expect that our model selection criteria are consistent if the true design is sparse and $k_{max}$ goes to infinity at a slower rate than $m$. The choice $k_{max}=0.3m$ corresponds to the expected upper bound of model sizes for the sparsity level $p = 0.2$.

 For all considered procedures we report several characteristics, which are calculated based on 10000  replicates. For each of these replications we compute the number of chosen variables that do not appear in the true model (false positives, FP) and the number of true regressors which were not detected (false negatives, FN).
These values are used to calculate the following statistics:
\begin{itemize}
     \item[1.] Misclassification probability:   $\quad \text{MP} = (\text{FP + FN})/(m-1)$.
   \item[2.] False discovery rate:  $\quad \text{FDR} = \frac{\text{FP}}{\text{FP} +\ k^* - \text{FN}}$, or $0$ in case of no discoveries.
    \item[3.] Power $= \frac{k^*-\text{FN}}{k^*}$ (cases for which $k^*=0$ are excluded from this analysis).
\end{itemize}
For each scenario the values of MP, FDR and Power are averaged over all 10000 simulations.

\subsection{First part of Simulation}
The results of this part of the simulation study are illustrated in Figure \ref{Fig3} and Figure \ref{Fig4} in Appendix \ref{Sec:ApFig}.
Figure \ref{Fig3} presents the graphs of our computed characteristics as functions of the sparsity parameter  $p$ in case of known $\sigma$. The two plots (a) and (b) of the first line show that, as expected, the Bayes oracle has the lowest misclassification probability MP. However, the differences  in MP between the Bayes oracle and FDR controlling procedures, as well as  mBIC1-mBIC3, are hardly observable.  For $p < 0.05$ also MP of mBIC is comparable to and sometimes even better than MP of other criteria. However, for $p=0.05$ differences become observable, and for $p > 0.05$ MP of mBIC substantially exceeds the values reported for other methods. Qualitatively there is no different behavior in the plots for $m=256$ and $m=1024$, though it is clear that MP gets smaller for larger sample size. These observations agree well with our results on the asymptotic optimality of mBIC in case of extreme sparsity, and of the FDR controlling procedures and mBIC1-mBIC3 in a wider range of sparsity levels. Apparently our asymptotic analysis  describes the situation already quite well for $m=256$.

Plots (c) and (d) of Figure \ref{Fig3} show the FDR of different procedures.
FDR of the Bayes oracle increases from 0 for $p=0$ to 0.08 for $p=0.2$ in case of $m=256$, and to 0.03 for  $m=1024$. As expected, FDR of both step up and step down multiple testing procedures slowly decreases from approximately 0.05 for $p=0$ to 0.04 for $p=0.2$ independently of the sample size.
The same pattern is observed for the first modified version of BIC aimed at controlling FDR, mBIC1. For $m=256$ its FDR behaves almost identical to BH, whereas for $m=1024$ FDR starts at 0.03 and decreases to 0.02. FDR of mBIC2 and mBIC3 behave quite differently in case of extreme sparsity. Due to the choice of constants $d_1$ and $d_2$, FDR of mBIC2 is close to FDR of mBIC1 for small $p$.  In contrast mBIC3 has extremely small FDR for $p$ close to 0, which is due to the fact that for small $k$ Sterling's approximation is not valid. For larger $p$ (resulting in the choice of larger models) mBIC2 and mBIC3  behave more and more similar, and their FDR stabilizes  at a level of approximately 0.05 for $m=256$ and at 0.025 for $m=1024$, being thus slightly larger than FDR of mBIC1.
Finally FDR of the modified version of BIC aimed at controlling the Family Wise Error Rate, mBIC, quickly decreases; for $m=256$ from approximately 0.043 for $p=0$ to 0.0015 for $p=0.2$, and for $m=1024$ from approximately 0.015 down to 0.001.

The pattern of the graphs (e) and (f) for power corresponds to the behavior of FDR. At $p = 0.001$ clearly the Bayes oracle has smallest power. In case of $m=256$ for $p\geq 0.01$ the power of the Bayes oracle exceeds the power of other model selection criteria, whereas for $m=1024$ BH and SD have largest power. However, the differences of power between all criteria apart from mBIC are very small and for $p>0.001$ do not exceed 4\%. Also, it is interesting to observe that the power of these criteria slowly increases with $p$. mBIC performs substantially different than the other methods. Its power is significantly smaller  and remains constant as a function of $p$.
Graphs (e) and (f) illustrate also that as expected power increases with sample size.

In Figure \ref{Fig4} the results for unknown $\sigma$ are reported. The most obvious difference between the case of known and unknown $\sigma$  is observed for the multiple testing procedures based on simple regression tests. FDR of these procedures is close to the nominal level of 0.05 only when $p$ is very close to 0. For larger values of $p$ other important regressors inflate the residual error in simple regression tests, which leads to a very low power, low FDR and large misclassification rate. As a consequence, when $\sigma$ is unknown simple regression tests perform substantially worse than other methods based on model selection strategies. This finding  has been discussed extensively in \cite{FRTB} in the context of genome wide association studies.

 Concerning modified versions of BIC  the performance of mBIC1-mBIC3 is only slightly affected by the fact that $\sigma$ is unknown when $p\leq 0.1$. However, for $p=0.2$ and $m=256$ one observes a significant increase of FDR and MP when compared to the known $\sigma$ case. In particular mBIC2 and mBIC3 have a sudden increase of FDR which results also in a significantly larger MP than that of the Bayes rule. mBIC1 suffers from the same problem, though to a lesser extent. Thus for larger $p$ the second order approximation in mBIC1 proves beneficial.

 While mBIC2 and mBIC3 are getting for larger $p$ too liberal, mBIC has the opposite tendency. Especially for $m=256$  the fact that $\sigma$ is unknown leads to a substantial decrease of power and FDR for large values of $p$.  For $m=1024$ the relative performance of mBIC substantially improves and is only slightly worse than for known $\sigma$. However, both in terms of power and MP mBIC is still performing much worse than mBIC1 - mBIC3.

\subsection{Second part of Simulation}
Here we want to assess numerically  the asymptotic behavior which was analyzed theoretically in Section \ref{Sec:Regression}. To this end we will perform similar computations as above, but consider the wider range of sample sizes  $n = m \in \{128, 256, 512, 1024, 2048, 4096\}$. The sparsity parameter is computed as $p = c_\beta m^{-\beta}$, where we analyze the extremely sparse case $\beta = 1$ as well as $\beta \in \{1/2, 1/4, 1/8\}$. For each scenario the  factor $c_\beta$ is chosen such that for $m = 128$ we always have $p = 0.125$. The misclassification probability for the four different scenarios and for the various methods are provided in Figure \ref{Fig:asymp}. We no longer consider SD, as it has been seen before to behave more or less identical with BH. We also present here only the case of unknown $\sigma$, which is of particular interest in view of the unproven conjecture that mBIC1 - mBIC3 will be ABOS for a wider range of sparsity levels than mBIC.

\begin{figure}[h]
\caption{Asymptotic behavior of the misclassification rate MP at sparsity $p  \propto  m^{-\beta}$ for different values of $\beta$.  } \label{Fig:asymp}
\vspace{3mm}
$\begin{array}{c@{\hspace{0cm}}c}
\multicolumn{1}{l}{\mbox{(a) } p \propto m^{-1} }

&    \multicolumn{1}{l}{\mbox{ (b) } p \propto m^{-1/2}}   \\
\epsfig{file=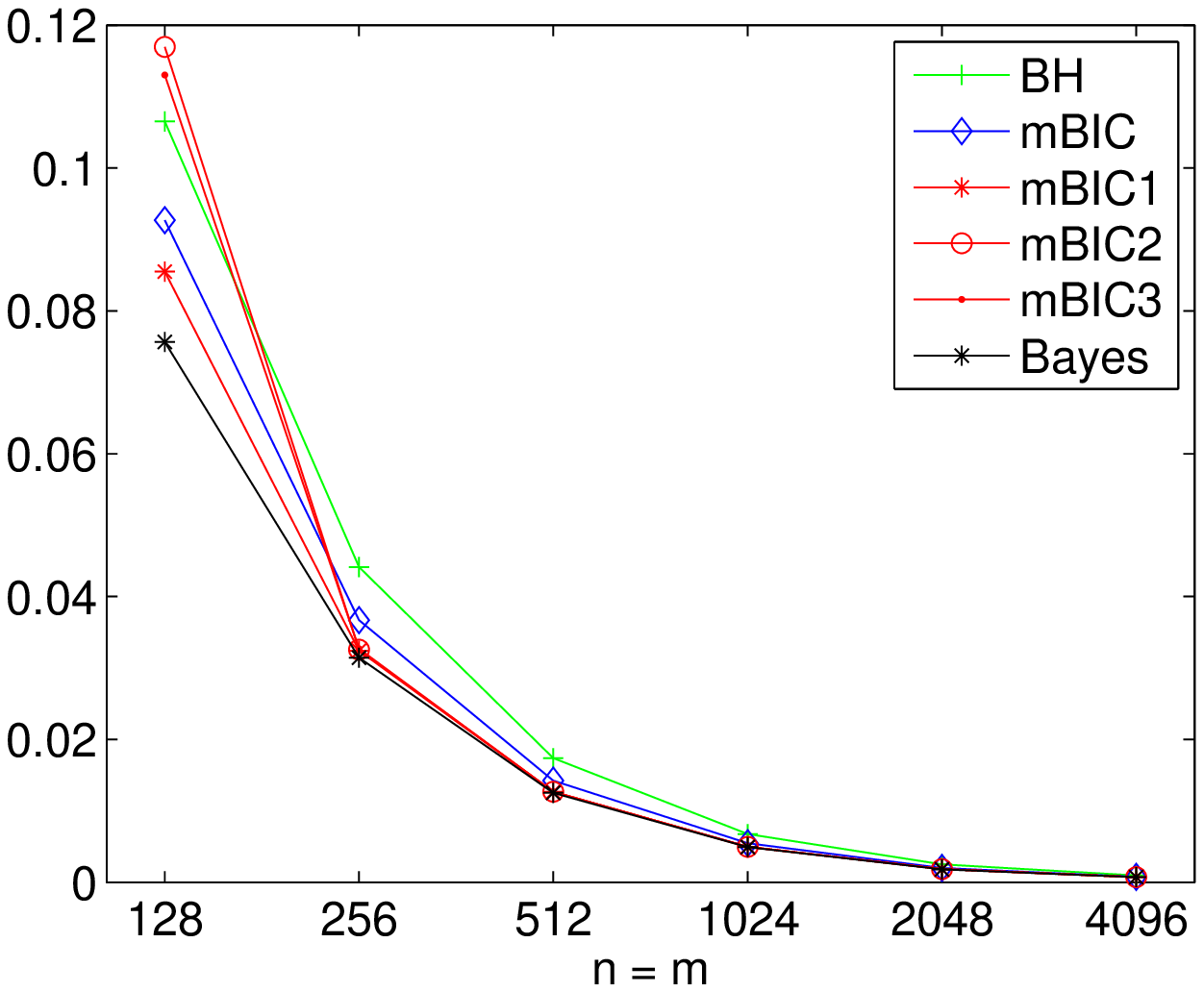,width=7.0cm} &
       \epsfig{file=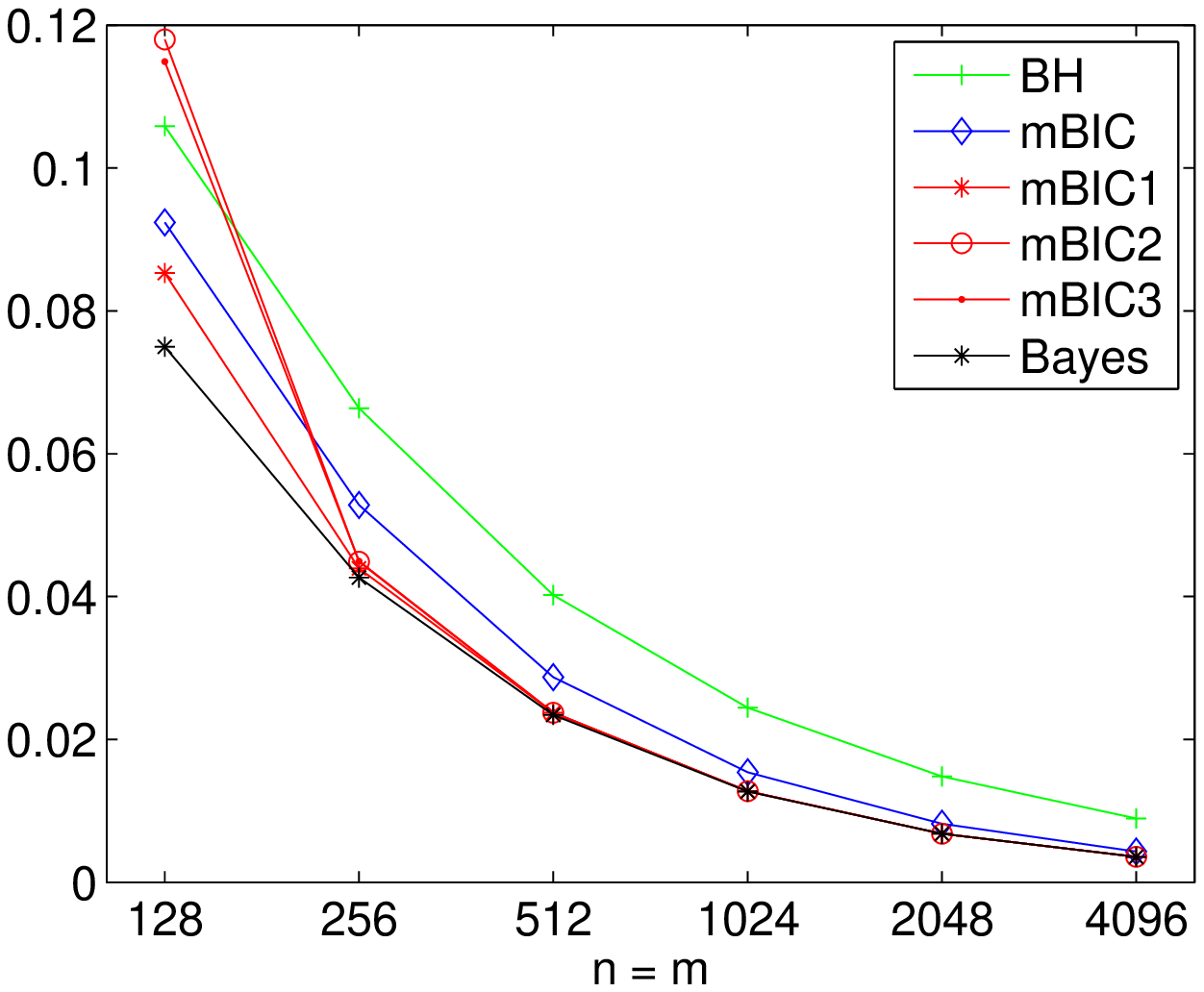,width=7.0cm}\\
       \multicolumn{1}{l}{\mbox{(c) } p \propto m^{-1/4} }
&    \multicolumn{1}{l}{\mbox{ (d) } p \propto m^{-1/8}  }  \\
\epsfig{file=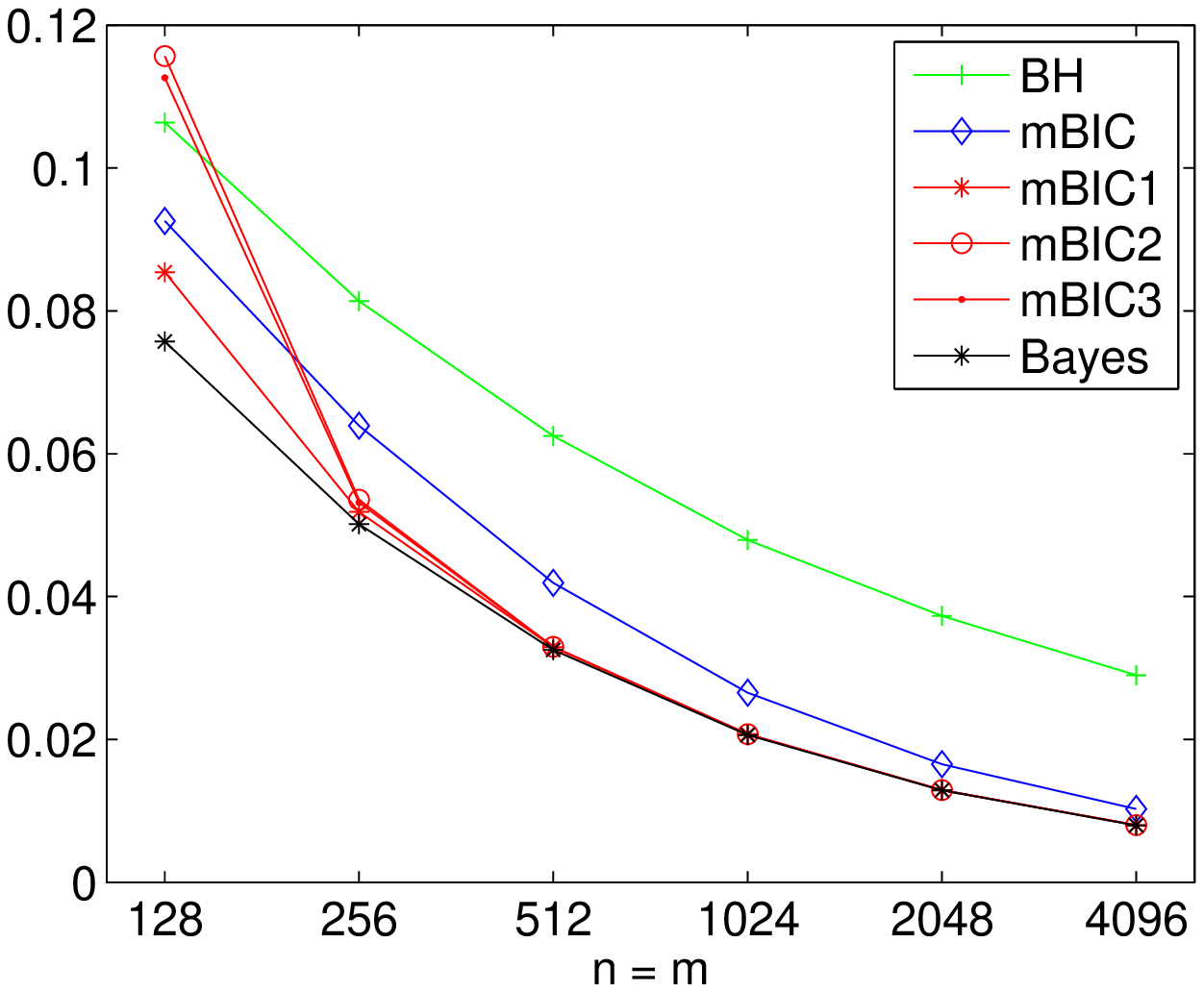,width=7.0cm} &
       \epsfig{file=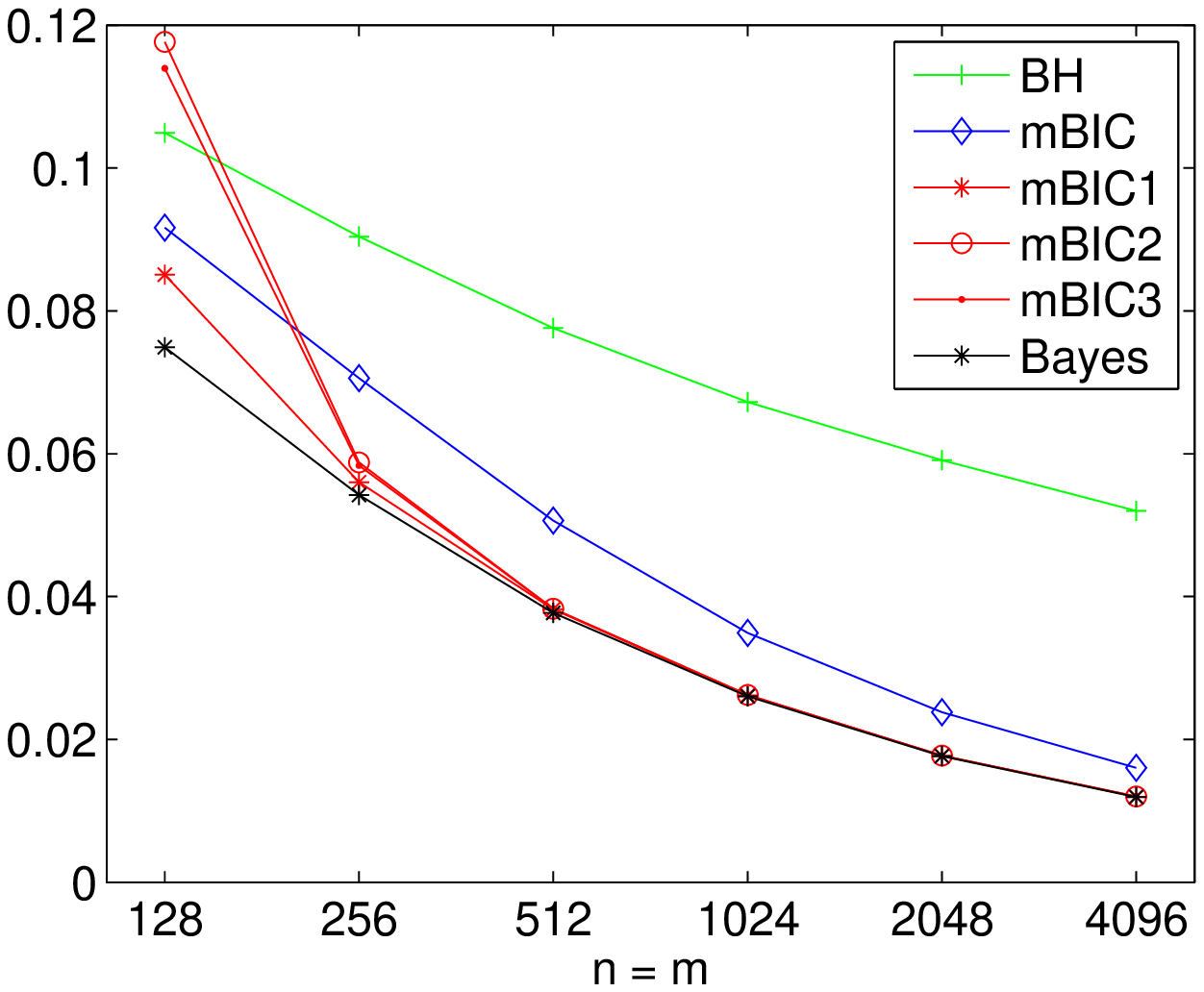,width=7.0cm}
    \end{array}$
\end{figure}

For $m= 128$ (and $p = 0.125$) mBIC1 has lower misclassification rate than all other criteria. mBIC2 and mBIC3 have relatively large misclassification rate, and are performing worse than mBIC. We had seen this behavior before already for $m = 256$ and $p = 0.2$. If there are relatively many true signals and $m$ is small then mBIC2 and mBIC3 tend to be slightly too liberal.

For $\beta = 1$ the misclassification rate of all procedures converges towards that of the optimal Bayes rule. In particular it is confirmed that mBIC is  ABOS  in case of extreme sparsity, although mBIC1 - mBIC3 perform even better. In case of extreme sparsity it seems that even BH behaves relatively well. For smaller $\beta$ a multiple testing approach is not suitable in case of unknown $\sigma$ as we discussed already above.

 The smaller $\beta$, the poorer becomes the performance of mBIC. Although it seems that its misclassification rate still converges towards that of the Bayes rule, this is only true in absolute terms. Already for $\beta = 1/2$ the ratio  of the misclassification rates between BH and the Bayes rule remains more or less constant at 1.2. For  $\beta = 1/8$ this ratio is actually growing, and mBIC is certainly not optimal. On the other hand MP of mBIC1 - mBIC3 rapidly converges towards MP of the Bayes rule in all four scenarios, which supports our conjecture that an analogue of Theorem \ref{TH:modBIC} should also hold in case of unknown $\sigma$.
Finally Figure \ref{Fig:asymp} suggests that regardless of the sparsity level $\beta$ all modifications of BIC are consistent selection rules in the asymptotic framework of Assumption B.

\section{Discussion} \label{Sec:Disc}

The first part of this paper generalizes optimality results of \cite{Bog2009} for multiple testing procedures. Instead of  scaled normal distributions we consider models of a larger class of distributions under the alternative. Only  point null hypotheses are considered and the measure under the alternative is kept fixed. The asymptotics is thus not driven by a scaling parameter which determines the effect size, but rather by the sample size $n$ which is assumed to become large.
 In that context we study two situations: The ``verge of detectability'' case as in \cite{Bog2009}, where the power of the Bayes oracle is positive but less than 1. In this article the notion of ``the verge of detectability'' is extended to the practically important case where the distribution of the effect size is fixed and the sample size $n$ slowly increases with the number of tests $m$. When sparsity is of the form $p \propto m^{-\beta}$ and the ratio of losses $\delta$ is bounded from above, then the ``verge of detectability'' is obtained when $n$ grows proportionally to $\log m$.  The second analyzed case is concerned with asymptotic power equal to 1, which is naturally associated with the situation where $n$  grows faster  than  $\log m$.  

In both cases all optimality results of \cite{Bog2009} could be proved for the considered general class of distributions, where in the second case the analysis is slightly more involved and some additional mild restrictions on the asymptotic behavior of the loss ratio $\delta$ are necessary. In particular it was shown that the Bonferroni selection rule is ABOS in case of extreme sparsity, whereas the Benjamini--Hochberg rule is ABOS under a much wider range of sparsity levels. Thus results  of \cite{Bog2009} have been extended to many practically important cases, where the distribution of the true effects is not symmetric. A new result is  that  the step down version of the FDR - controlling procedure is ABOS under almost the same conditions as BH.

Optimality results were then transferred into the context of linear regression. The simplest situation is concerned with orthogonal regressors  and known error variance $\sigma^2$, where optimality results from multiple testing can be directly applied. We analyzed the performance of mBIC, a modification of BIC which was previously introduced for model selection in high dimensional data \cite{BGD}, and which is known to control the family wise error rate under the given conditions \cite{BGZ}. It turns out that mBIC is ABOS in case of extreme sparsity, namely under the same conditions as the Bonferroni selection rule for multiple testing. Additionally three different FDR-controlling modifications of BIC were introduced. Optimality results for these selection rules, mBIC1 - mBIC3, entirely correspond to results for the step up and step down FDR controlling procedures  in multiple testing. Thus mBIC1 - mBIC3 are ABOS under a much wider range of sparsity levels than mBIC. All modified versions of BIC (including mBIC) are consistent under the same  assumption on sparsity levels which guarantee consistency of the Bayes oracle.

Next we showed ABOS of mBIC under extreme sparsity in case of unknown $\sigma$, a situation which is technically much more demanding than the previous case of known $\s$. We conjecture that in analogy to the known $\s$ case,  mBIC1 - mBIC3  should be ABOS  when removing the extreme sparsity restriction. While we were not able to give a formal proof, simulation results strongly support this conjecture.
Furthermore mBIC in case of unknown  $\sigma$ is consistent under the same conditions on sparsity levels under which the Bayes oracle is consistent. The same is expected to hold for mBIC1 - mBIC3.
 Apart from our simulation study, consistency of the modified versions of BIC   for unknown $\sigma$ can also be conjectured based on recent consistency results for the extended version of Bayesian Information Criterion, EBIC, reported in \cite{Chen1} and \cite{CL}. As discussed in \cite{ZB}, if the dimension of the maximal allowable model  $k_{max}$ satisfies $k_{max}/m \rightarrow \infty$ then mBIC2 is asymptotically equivalent to the standard version of EBIC, based on a uniform prior on the model dimension. It follows that mBIC2 can be interpreted as an approximation of the Bayesian rule, in which the prior  on the true number of regressors is uniform over the set $\{0,\ldots,k_{max}\}$, with $k_{max}=o(m)$.

The results presented in this article are important to understand optimality of model selection criteria under sparsity. However, they are somewhat preliminary as they are only considering the case of orthogonal regressors. In most applications where sparsity is an issue one is also dealing with $m > n$, that is the number of regressors exceeds the sample size. Our current analysis is explicitly not applicable to this situation. However,  we believe that the majority  of  results can be extended to the case $m>n$ if the design matrix satisfies  certain conditions for identifiability of small models, which are discussed for example in \cite{DS}, \cite{BR} or \cite{CL}. These expectations are confirmed by the successful application of mBIC2 to analyze  genome wide association study data, as reported in \cite{FRTB}. Theoretical analysis of  asymptotic optimality properties of modifications of BIC under non-orthogonal designs is the topic of further research.

\section*{Acknowledgment}
We want to thank Professor Jayanta K. Ghosh for many discussions and guidance.

This work is partially funded by by the WWTF grant MA09-007a for F. Frommlet and by grant 1 P03A 01430 of the Polish Ministry of Science and Higher Education for  M. Bogdan.

\section{Appendix}\label{Sec:App}

\subsection{Proof of Lemma \ref{LemF1}}   \label{Sec:App0}

The proof of Lemma \ref{LemF1} relies on the following technical result.

\begin{lem}\label{ApLem_1}
Let $a_n \rightarrow a$ be any convergent sequence. Define\\[2mm] $h_n(\mu) := \exp\left(a_n \frac{\mu}{\sigma^2} -
\frac{\mu^2}{2 \sigma^2}\right)$ and $h(\mu) := \exp\left(a \frac{\mu}{\sigma^2} -
\frac{\mu^2}{2 \sigma^2}\right)$. Then
\begin{equation} \label{double_limit}
\lim_{n\rightarrow \infty}  \left\|h_n \right\|_{L^n(\nu)} = \left\|h \right\|_{L^\infty(\nu)} \; .
\end{equation}
\end{lem}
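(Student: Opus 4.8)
The plan is to recognize this as the familiar fact that the $L^p(\nu)$ norms increase to the $L^\infty(\nu)$ norm as $p\to\infty$, the only novelty being that the integrand $h_n$ itself varies with the exponent $n$. Write $M:=\|h\|_{L^\infty(\nu)}$ for the $\nu$-essential supremum of $h$; since $h(\mu)=\exp(a\mu/\sigma^2-\mu^2/(2\sigma^2))$ is continuous, strictly positive and bounded above by $\exp(a^2/(2\sigma^2))$, we have $M\in(0,\infty)$. Because $\|h_n\|_{L^n(\nu)}^n=\int_{\mathbb{R}}\exp\!\big(n(a_n\mu/\sigma^2-\mu^2/(2\sigma^2))\big)\,d\nu(\mu)$, I would establish the two matching bounds $\liminf_n\|h_n\|_{L^n(\nu)}\ge M$ and $\limsup_n\|h_n\|_{L^n(\nu)}\le M$. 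Throughout I use that $\nu$ is a probability measure and that $\{a_n\}$ is bounded, say $|a_n|\le A$.

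For the lower bound, fix $\epsilon>0$. By definition of the essential supremum $\nu(\{h>M-\epsilon\})>0$, and since $\nu$ is finite there is $R>0$ with $\nu(B)>0$ for $B:=\{h>M-\epsilon\}\cap[-R,R]$. On the compact set $[-R,R]$ the convergence $a_n\to a$ forces $h_n\to h$ uniformly, so $h_n>M-2\epsilon$ on $B$ for all large $n$. Then $\|h_n\|_{L^n(\nu)}^n\ge\int_B h_n^n\,d\nu\ge(M-2\epsilon)^n\nu(B)$, and taking $n$th roots gives $\|h_n\|_{L^n(\nu)}\ge(M-2\epsilon)\,\nu(B)^{1/n}\to M-2\epsilon$, since $\nu(B)^{1/n}\to1$. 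As $\epsilon$ is arbitrary this yields $\liminf_n\|h_n\|_{L^n(\nu)}\ge M$.

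For the upper bound I would split the integral at a radius $R_0$, and this is where the main obstacle lies: $h_n$ does \emph{not} converge uniformly to $h$ on all of $\mathbb{R}$, because $h_n/h=\exp((a_n-a)\mu/\sigma^2)$ is unbounded as $|\mu|\to\infty$, so the tail cannot be controlled by convergence alone. The remedy is that the quadratic term dominates the linear one uniformly in $n$: using $|a_n|\le A$, for $|\mu|\ge R_0:=4A$ one has $a_n\mu/\sigma^2-\mu^2/(2\sigma^2)\le-\mu^2/(4\sigma^2)$, whence $\int_{|\mu|\ge R_0}h_n^n\,d\nu\le e^{-nR_0^2/(4\sigma^2)}$; enlarging $R_0$ if needed makes the $n$th root of this tail at most a constant $c<M$, uniformly in $n$. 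On the compact core $[-R_0,R_0]$ uniform convergence gives $h_n\le h+\epsilon\le M+\epsilon$ for $\nu$-almost every $\mu$ once $n$ is large (here using $h\le M$ $\nu$-a.e.), so $\int_{|\mu|\le R_0}h_n^n\,d\nu\le(M+\epsilon)^n$. Adding the two pieces gives $\|h_n\|_{L^n(\nu)}^n\le(M+\epsilon)^n+c^n\le2(M+\epsilon)^n$, hence $\limsup_n\|h_n\|_{L^n(\nu)}\le M+\epsilon$; letting $\epsilon\downarrow0$ finishes the proof. Everything except this uniform tail estimate is the routine $L^p\to L^\infty$ comparison together with the fact that $\nu(B)^{1/n}\to1$.
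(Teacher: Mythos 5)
Your proof is correct, but it takes a genuinely different route from the paper's. The paper's argument is an additive perturbation argument: it asserts (correctly, since the Gaussian factor dominates the linear term) that $\left\|h_n-h\right\|_{L^\infty(\nu)}\rightarrow 0$ on all of $\R$, deduces $\left\|h_n-h\right\|_{L^n(\nu)}\leq\left\|h_n-h\right\|_{L^\infty(\nu)}<\epsilon$, and then concludes by the triangle inequality in $L^n(\nu)$ together with the classical fact $\left\|h\right\|_{L^n(\nu)}\rightarrow\left\|h\right\|_{L^\infty(\nu)}$ for the \emph{fixed} function $h$. You instead prove the two matching bounds $\liminf_n\left\|h_n\right\|_{L^n(\nu)}\geq M$ and $\limsup_n\left\|h_n\right\|_{L^n(\nu)}\leq M$ from scratch, handling the varying exponent and varying integrand simultaneously via a compact-core/tail decomposition. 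One small remark: your stated "main obstacle" is not quite the right diagnosis --- while the ratio $h_n/h=\exp((a_n-a)\mu/\sigma^2)$ is indeed unbounded, the \emph{difference} $h_n-h$ does converge to $0$ uniformly on all of $\R$ (the Gaussian decay absorbs the unbounded factor), which is exactly what the paper exploits; the difficulty you are really circumventing is that additive uniform closeness of $h_n$ to $h$ does not directly control the $n$-th powers, so your multiplicative tail estimate $a_n\mu/\sigma^2-\mu^2/(2\sigma^2)\leq-\mu^2/(4\sigma^2)$ for $|\mu|\geq 4A$ is the right tool for your chosen route. What each approach buys: the paper's is shorter and cleanly separates the perturbation from the classical $L^p\rightarrow L^\infty$ limit, while yours is fully self-contained, makes the tail control explicit, and would generalize more readily to settings where the triangle-inequality reduction is unavailable. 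Both correctly use that $\nu$ is a probability measure and that $M=\left\|h\right\|_{L^\infty(\nu)}\in(0,\infty)$.
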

\begin{pro}
First note that for all $n$ it holds that $h_n \in L^\infty(\nu)$, and therefore  also $h_n \in L^m(\nu), \forall m >0 $. It is easy to check that $\lim_n \left\|h_n -h\right\|_{L^\infty(\nu)} = 0$. Thus for any $\epsilon >0$ and sufficiently large $n$ we have $\left\|h_n -h\right\|_{L^n( \nu)}\leq \left\|h_n -h\right\|_{L^\infty( \nu)}<\epsilon$. Now   (\ref{double_limit}) easily follows by the triangle inequality and the fact that\\ $\lim_{n\rightarrow \infty} \left\|h\right\|_{L^n( \nu)}= \left\|h\right\|_{L^\infty( \nu)}$.
\end{pro}

\noindent Now we are ready to prove Lemma \ref{LemF1}.

\begin{pro}

Let $h_n(\mu) = \exp\left(a_n \frac{\mu}{\sigma^2} -
\frac{\mu^2}{2 \sigma^2}\right)$. Then $(\delta f)^{1/n}  = \left\|h_n \right\|_{L^n(\nu)}$ and due to Assumption (A)   $\lim_n (\delta f)^{1/n} = e^{C/2}$. Note that $a_n$ has to be bounded, otherwise the sequence  $\left\|h_n \right\|_{L^n(\nu)}$  could not be bounded. Let $a$ be an accumulation point of $a_n$.
By Lemma (\ref{ApLem_1})  for any subsequence $a_j \rightarrow a$ it holds
\begin{equation} \label{double_limit_1}
\lim_j  \left\|h_j \right\|_{L^j( \nu)} = \left\|\exp\left(a \frac{\mu}{\sigma^2} -
\frac{\mu^2}{2 \sigma^2}\right)\right\|_{L^\infty( \nu)}  \; .
\end{equation}
If $a \in S$ then   $\left\|h \right\|_{L^\infty( \nu)} = \exp\left(\frac{a^2}{2 \sigma^2}\right)$
and  taking logarithms yields $a = -\sqrt{C} \sigma$.  Thus the only potential accumulation point of $a_n$ within $S$ is $-T$.
To complete the proof of Lemma \ref{LemF1} we will show that  $a\notin S$ leads to a contradiction with Assumption (C).

If  $a\notin S$ then $ a \in (l_a,   r_a)$ where $l_a<r_a$ are the boundaries of $S$, closest to $a$.
It is immediately clear that either
\begin{equation}\label{one}
\left\|h \right\|_{L^\infty( \nu)}= h(l_a)
\end{equation}
or
\begin{equation}\label{two}
\left\|h \right\|_{L^\infty( \nu)}= h(r_a)\;\;.
\end{equation}
The maximum is taken on the right boundary (\ref{two})  when $a\in (\frac{l_a+ r_a}{2},r_a)$ and  for $r_a\neq 0$ we obtain that
$a= \frac{1}{2}\left(\frac{T^2}{r_a}+r_a\right)$. Now, since $a\leq 0$  these conditions imply
$$r_a<0\;\;,\;\;T^2 > r_a^2\;\;\mbox{and}\;\;  T^2 < l_a   r_a\;\; ,$$
and we conclude that $-T\in (l_a,  r_a)$. But according to Assumption (C) we have $-T \in S$, which contradicts  $(l_a,   r_a) \notin S$.

Similarly, one can  show that for any value $a\in (l_a,\frac{l_a+ r_a}{2})$  the case (\ref{one})  also leads to a contradiction with Assumption (C) .

Now consider the remaining  case (\ref{two}) and $r_a=0$. Then   (\ref{double_limit_1}) implies that $T=0$.  However,  due to Assumption (C) $\mu$ has a positive density in some neighborhood of 0, in contradiction with  $r_a$ lying on the boundary of the support of $\mu$.

The proof that $b_n\rightarrow T$ goes exactly along the same lines.

\end{pro}

\subsection{Proof of Lemma \ref{LemF2}} \label{Sec:App1}
\ \\

\begin{pro}
By Lemma  \ref{LemF1}  $a_n$ converges to 0.  Also, by Assumption (C) there
exists $\epsilon>0$ such that $\nu(\mu)$ has a density $\rho(\mu)$ on the interval $(-\epsilon, \epsilon)$. It is immediately clear that
\begin{equation}\label{eq1}
\int\limits_{(\e,\infty)} h^n_n(\mu)\ d\nu(\mu) \leq
 e^{-n \frac{\e^2}{2\sigma^2}} \nu(\e,\infty)\ .
\end{equation}
Also, there exists  $n_0$ such that for every  $\mu < -\e$ and $n>n_0$ it holds  $a_n \mu < \mu^2/4$
(because $a_n \rightarrow 0$). Thus for $n>n_0$
\begin{equation}\label{eq2}
\int\limits_{(-\infty, -\e)} h^n_n(\mu)\ d\nu(\mu) \leq e^{-n
\frac{\e^2}{4\sigma^2}} \nu(-\infty,-\e)\;\;.
\end{equation}

Concerning the integral over the interval $(-\epsilon,\epsilon)$,
by completion of squares one derives
\begin{eqnarray} \label{f1}
& \int\limits_{-\e}^\e  h^n_n(\mu)\  \rho(\mu) d\mu  =
\rho_n \exp\left(\frac{n a_n^2}{2\sigma^2}\right) \int\limits_{-\e}^\e
\exp\left(-n \frac{ (\mu - a_n)^2}{2 \sigma^2}\right)\ d\mu & \nonumber \\
& = \rho_n e^{ \frac{n a_n^2}{2\sigma^2}}   \frac{\sqrt{2 \pi} \sigma}{\sqrt{n}}
\left[ \Phi(\sqrt{n}(\e-a_n)/\sigma) - \Phi(\sqrt{n}(- \e- a_n)/\sigma)  \right] \; ,&
\end{eqnarray}
where $\rho_n \in [\inf\limits_{\mu \in (-\e,\e)} \rho(\mu) ,\sup\limits_{\mu \in(-\e,\e)}  \rho(\mu)]$,   
and  $0 < \inf\limits_{\mu \in (-\e,\e)} \rho(\mu) \leq \sup\limits_{\mu \in(-\e,\e)}  \rho(\mu) < \infty$ 
according to Assumption (C).

 Note that $\Phi(\sqrt{n}(\e-a_n)/\sigma) \rightarrow 1$ as well as $\Phi(\sqrt{n}(- \e-a_n)/\sigma) \rightarrow 0$  (because  $a_n \rightarrow 0$).
Comparing (\ref{eq1}), (\ref{eq2}) and (\ref{f1}) we observe that the integral over $(-\epsilon, \epsilon)$ dominates the two remaining terms and from (\ref{JT1}) it follows that
$$ 1=\sqrt{\frac{2\pi}{n}}\sigma(f \delta)^{-1}\rho_n\exp\left( \frac{n a_n^2}{2\sigma^2}\right)(1+o_n)\;\;. $$

Thus we may conclude that the sequence
$$ S_n := (f \delta)^{-1} n^{-1/2} \exp\left( \frac{n a_n^2}{2\sigma^2}\right) $$
is bounded and therefore for any convergent subsequence  it holds that
 $$
a_{n} \sim -\frac{\sigma \sqrt{\log n + 2 \log (\delta f)}}{\sqrt n} \ .
$$
  To get the exact behavior we further split the domain of the integral in
$(-\e,- g_n)$, $(- g_n ,0)$ and $(0, \e)$, where $g_n$ is a positive sequence such that $a_n = o(g_n)$, or more specifically
\begin{equation} \label{g_split}
g_n \rightarrow 0  \quad \mbox{ with } \quad \frac{\log n}{n g_n^2} \rightarrow 0, \quad  \frac{\log (\delta f)}{n g_n^2} \rightarrow 0 \; .
\end{equation}
For the first interval we get a bound by evaluating the integrand at $- g_n$, for the second and third interval we repeat the computations leading to (\ref{f1})  with the corresponding boundaries, and finally obtain
$$
\delta f =\int\limits_{-g_n}^0  h^n_n(\mu) \rho(\mu) d\mu (1 + o_n)  =
  \frac{  \rho(0^-) \sqrt{2\pi} \sigma} { \sqrt{n}}  \exp\left(\frac{n a_n^2}{2\sigma^2}\right) (1 + o_n)
$$
which yields (\ref{f0a}).
The proof for $b_n$ is exactly the same.
\end{pro}

\begin{rem}
{\rm
The proof of Lemma \ref{LemF2} relies upon choosing a suitable  sequence $g_n$.  The choice of the sequence $g_n$ strongly depends on the asymptotic behavior of $\delta f$.    If for example for sufficiently large $n$, $\delta f \leq  n^\alpha$, with $\alpha>0$,   one might use $g_n = \frac{\log n}{\sqrt n}$, the choice of \citep{JT}.   Another  situation occurs if $\delta f \sim  e^{n^{1-\gamma}}$ with $0<\gamma<1$,  where $g_n = n^{-\gamma/3}$ is a suitable choice.}
\end{rem}

\subsection{Proof of Theorem \ref{TH1_general}} \label{Sec:App2}
\ \\

\begin{pro}

Notice, that the type II error of the Bayes oracle is given by $t_2 = \int \Psi_n(\mu) \ d\nu(\mu)$ with
$$
\Psi_n(\mu) = \Phi\left(\frac{\sqrt n (b_n - \mu)}{\sigma}\right)  - \Phi\left(\frac{\sqrt n (a_n - \mu)}{\sigma}\right) \; .
$$

We will now calculate the asymptotic formula for the type II error in case when $C=0$. Consider first the integral over $\mu \in (-\infty,0)$. Remember that  $a_n\rightarrow 0$, thus for $n$ sufficiently large  $\nu$ has a density $\rho(\mu)$ on $(2a_n,0)$ and it holds that
$$
\int\limits_{2 a_n}^{0} \Psi_n \ d\nu   =  \int\limits_{2 a_n}^{0} \left[\Phi\left(\frac{\sqrt n (b_n - \mu)}{\sigma}\right)  - \Phi\left(\frac{\sqrt n (a_n - \mu)}{\sigma}\right)  \right]
\ \rho(\mu) \ d\mu \; .
$$
Applying the mean value theorem and substitution yields
\bea
\int\limits_{2 a_n}^{0} \Psi_n \ d\nu  & = &  \rho_n \frac{\s}{\sqrt n} \int\limits_{\frac{\sqrt n}{\s} a_n}^{-\frac{\sqrt n}{\s} a_n} \left[\Phi\left(\frac{\sqrt n (b_n - a_n)}{\sigma} - z \right)  - \Phi\left( -z \right)  \right]
 \ dz  
\eea
for some $\rho_n \in [\inf\limits_{\mu \in (2 a_n,0)} \rho(\mu) ,\sup\limits_{\mu \in(2 a_n,0)}  \rho(\mu)]$.  Using the facts that $\int\limits_{-x}^{x}\Phi(z) dz = x$ and $-\frac{\sqrt{n}b_n}{\sigma}\rightarrow -\infty$ we further obtain
\bea
\int\limits_{2 a_n}^{0} \Psi_n  \ d\nu &=& \rho_n \frac{\s}{\sqrt n} \left[\int\limits_{\frac{\sqrt n}{\s} a_n}^{-\frac{\sqrt n}{\s} a_n}[1-\Phi(-z)] \ dz - \int\limits_{\frac{\sqrt n}{\s} a_n}^{-\frac{\sqrt n}{\s} a_n} 
 \Phi\left(z + \frac{\sqrt n}{\s} (a_n -  b_n) \right) \ dz\right]  \\
&=& - \rho(0^-) a_n (1+o_n)= \sigma \rho(0^-) \sqrt{\frac{\log v}{n} }  (1 + o_n) \; .
\eea
where the last equality holds due to (\ref{cv_general}).

It remains to show that the integral over $(-\infty,2 a_n)$ is of lower order. 
It holds that
\bea 
\int\limits_{-\infty}^{2 a_n} \Psi_n \ d\nu &\leq&
\int\limits_{-\infty}^{2 a_n} \left( 1 - \Phi(\sqrt n (a_n - \mu)/\sigma))
\right)\ d\nu  \\
&\leq &    1 - \Phi(-a_n  \sqrt n/\sigma  )   = O\left( (v \log v)^{-1/2}\right) \; .
\eea
 Assumption (A) yields $f \delta \log v \rightarrow \infty$, and hence  $(v \log v)^{-1/2} = o\left(\sqrt{\frac{\log v}{n} }\right)$.
Similar computations for the  interval $(0, \infty)$ lead to
\begin{equation} \label{t2_general}
t_2 = \sigma \sqrt{\frac{\log v}{n} } \left(\rho(0^-) +
\rho(0^+)\right)  (1 + o_n(1)) \;.
\end{equation}

In case of $0 < C < \infty$  we know from Lemma \ref{LemF1} that $a_n \rightarrow - T$ and $b_n \rightarrow T$, where $T=\sigma \sqrt{C}>0$. 
For  $\mu \in (-T,T)$,  $\Psi_n(\mu) \rightarrow 1$, while for $\mu \in (-\infty,T) \cup (T,\infty)$,  $\Psi_n(\mu)\rightarrow 0$. Then by the  dominated convergence theorem,
\beq\label{t2Cg0}
t_2=\int_{-\infty}^{\infty} \Psi_n(\mu) d\nu(\mu) = \nu(-T,T)\ (1 + o_n)\;,
\eeq
and  $\nu(-T,T)>0$, since the distribution has a positive density in neighborhoods of $-T$ and $T$.

The Bayes risk can be written as
$$
R=mp\delta_At_2(1+f\delta t_1/t_2)\;.
$$
Thus by (\ref{t2_general}) and (\ref{t2Cg0}) to complete the proof of Theorem \ref{TH1_general} it is enough to show that
\begin{equation}\label{conpr}
f\delta t_1/t_2\rightarrow 0\;\;.
\end{equation}
In case of $C=0$,  (\ref{cv_general}) and the normal tail approximation yield
$t_1 \propto (v\log v)^{-1/2}\;.$
Thus from  (\ref{t2_general}) we easily obtain
$$f\delta \frac{t_1}{t_2}\propto \frac{f\delta \sqrt{n}}{\sqrt{v}\log v}=\frac{1}{\log v}\rightarrow 0\;\;.$$ 

In case of $C>0$ we write
$t_1= t_{1a}+t_{1b}\;\;,
$
where
$t_{1a}=\Phi\left(\sqrt{n} a_n/\sigma\right)$
and
$t_{1b}=1-\Phi\left(\sqrt{n} b_n/\sigma\right)\;.$
Using the fundamental equality (\ref{JT1}) for $a_n$ yields
$$
\delta f t_{1a} \sim    \frac{\sigma}{T \sqrt n}   \ \frac{1}{\sqrt{2 \pi}}
\int\limits_{\mathcal{R}  } \exp\left( -\frac{n}{2\sigma^2}(a_n - \mu)^2 \right)\ d\nu(\mu) \;.
$$
Because $a_n \rightarrow -T$ similar considerations as in (\ref{eq1}) show that the integral vanishes rapidly for $\mu \notin (-T - \e, -T +\e)$. 
Now observe that 
$$\frac{1}{\sqrt{2 \pi}}
\int_{-T-\e}^{-T+\e} \exp\left( -\frac{n}{2\sigma^2}(a_n - \mu)^2 \right) \rho(\mu) \ d \mu \leq
 M_{\rho} \frac{1}{\sqrt{2 \pi}} \int_{\mathbb{R}} \exp\left( -\frac{n}{2\sigma^2}(a_n - \mu)^2 \right) \ d \mu \;\;,$$
where $M_{\rho}= \sup_{\mu\in(-T-\e,-T+\e)} \rho(\mu)<\infty$. 
Moreover, 
$$\frac{1}{\sqrt{2 \pi}} \int_{\mathbb{R}} \exp\left( -\frac{n}{2\sigma^2}(a_n - \mu)^2 \right) \ d \mu=
\frac{\sigma}{ \sqrt n} \;.
$$
Thus we finally obtain $\delta f t_{1a} = O\left( \frac{1}{n} \right)$.
Analogous considerations for  $t_{1b}$  finish the proof.

\end{pro}

\subsection{Proof of Theorem \ref{TH2_general}} \label{Sec:App3}
\ \\

\begin{pro}
First consider the case $C = 0$. To prove  sufficiency of (\ref{optcv1_general}) and (\ref{optcv2_general})
for ABOS of a fixed threshold rule note that computing type II error for rules of the form (\ref{optcv_general}) involves similar computations to those leading to (\ref{t2_general}), but using $\tilde a_n$, and $\tilde b_n$ instead of $a_n$ and $b_n$. Taking into account (\ref{optcv_general}) and (\ref{optcv1_general}) one thus obtains 
$$
\int\limits_{2 \tilde a_n}^{0} \Psi_n \ d\nu   =  -\rho(0^-) \tilde a_n(1 + o_n) = \rho(0^-) \sigma \sqrt{\frac{\log v}{n}} (1+o_n) \;,
$$
which is asymptotically equivalent to the first contribution of the type II error  of the Bayes Oracle. On the other hand
$$
\int\limits_{-\infty}^{2 \tilde a_n} \Psi_n \ d\nu   \leq  1 - \Phi(-\tilde a_n  \sqrt n/\sigma  ) \sim
\frac{\exp(-z_a/2)}{\sqrt{2 \pi v [\log (v) + z_a]}} = o\left(\sqrt{\frac{\log v}{n} }\right) \; ,
$$
where the last equality follows from the first part of Assumption (A) and (\ref{optcv2_general}). Similar calculations on the interval $[0,\infty]$ yield 
$$
\int\limits_{0}^{\infty} \Psi_n \ d\nu  = \rho(0^+) \sigma \sqrt{\frac{\log v}{n}} (1+o_n)  \; .
$$
Thus the type II error component of the risk $R_2=mp\delta_At_2$ satisfies $R_2=R^B(1+o_n)$.

Now, using (\ref{t2_general}) and  the tail approximation for the type I error we obtain
\beq \label{HlpR1R2}
R_1/R^B = \frac{m(1-p)\delta_0 t_1}{R^B} =C_{\sigma \rho} \frac{\exp(-z_a/2)+\exp(-z_b/2)}{\log v}(1+o_n)\;\;,
\eeq
where 
$C_{\sigma \rho}=\frac{1}{\sqrt{2 \pi}\sigma(\rho(0^-)+\rho(0^+))}$.
Thus under assumption (\ref{optcv2_general}) $R_1=o(R^B)$, which completes the proof of sufficiency  for $C=0$. 

In case of $C> 0$ due to (\ref{optcv1_general})  it holds that
$\tilde a_n \rightarrow -T$ and $\tilde b_n \rightarrow T$, and hence thresholds specified by (\ref{optcv_general}) also have  type II error of the form (\ref{t2Cg0}). For sufficiency it remains to establish (\ref{conpr}). To this end note that the type I error can be written approximately as
$$
t_1 \sim \frac{1}{\sqrt{2\pi}}\frac{\exp(-z_a/2) + \exp(-z_b/2)}{\sqrt{v \log v}} \; .
$$
 Hence
\beq \label{ntwo}
R_1/R^B = C_{\nu} \frac{\exp(-z_a/2)+\exp(-z_b/2)}{\log v}(1+o_n) \; ,  
\eeq
where $C_{\nu}=\frac{\sqrt{C}}{\nu(-T,T)}$. Thus, under assumption (\ref{optcv2_general}) again $R_1=o(R^B)$, and the proof of sufficiency  is completed.

Concerning necessity, similar arguments as in the proof of Theorem 3.2 of \citep{Bog2009} show that  (\ref{optcv1_general})  is  necessary for ABOS. In that case the computations leading to (\ref{HlpR1R2}) and (\ref{ntwo}) are still valid and imply the necessity of  (\ref{optcv2_general}). 

\end{pro}

\subsection{Lemma on the existence of the exact BFDR controlling rule} \label{Sec:ApBFDR}
\ \\

We first prove the following result

\begin{lem} \label{Lem:pointmass}
For any fixed $s \neq 0$ the function   
$$
f(c) := \frac{2 - \Phi(c-s) - \Phi(c+s)}{2(1 - \Phi(c))} \; 
$$
satisfies
\begin{itemize}
\item[a)] $f(0)=1$,
\item[b)]  $\lim\limits_{c\rightarrow \infty} f(c)=\infty$,
\item[c)] $f(c)$ is increasing in $c$  for $c \geq 0$.
\end{itemize}
\end{lem}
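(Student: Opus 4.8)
The plan is to handle the three claims in turn, with (c) the substantial one. I first record that the numerator $2-\Phi(c-s)-\Phi(c+s)$ is invariant under $s\mapsto -s$, so $f$ depends on $s$ only through $|s|$ and I may assume $s>0$ throughout. For (a) I evaluate at $c=0$: since $\Phi(-s)+\Phi(s)=1$ the numerator is $2-1=1$, while the denominator is $2(1-\Phi(0))=1$, giving $f(0)=1$. For (b) I write the numerator as a sum of Gaussian tails, $(1-\Phi(c-s))+(1-\Phi(c+s))$, and use the crude bound $f(c)\ge \frac{1-\Phi(c-s)}{2(1-\Phi(c))}$; the classical estimate $1-\Phi(x)\sim\phi(x)/x$ then gives $\frac{1-\Phi(c-s)}{1-\Phi(c)}\sim\frac{c}{c-s}\exp(cs-s^2/2)\to\infty$ as $c\to\infty$, because $s>0$.

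For the monotonicity in (c), writing $N(c)=2-\Phi(c-s)-\Phi(c+s)$ and $D(c)=2(1-\Phi(c))$, the sign of $f'$ is that of $N'D-ND'$. Inserting $N'=-\phi(c-s)-\phi(c+s)$ and $D'=-2\phi(c)$ and regrouping reduces the problem to proving $h(c-s)+h(c+s)\ge 0$, where $h(y):=\phi(c)(1-\Phi(y))-\phi(y)(1-\Phi(c))$. The device that makes this tractable is the Mills ratio $R(x):=(1-\Phi(x))/\phi(x)$: factoring out $\phi(c)\phi(y)$ gives $h(y)=\phi(c)\phi(y)\,[R(y)-R(c)]$, hence
\[
h(c-s)+h(c+s)=\phi(c)\Big\{\phi(c-s)\,[R(c-s)-R(c)]-\phi(c+s)\,[R(c)-R(c+s)]\Big\}.
\]

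I would then close the argument with two monotonicity properties of $R$. First, $R$ is convex on $\mathbb{R}$: from the identity $R'=xR-1$ one obtains $R''=(1+x^2)R-x$, which is positive by the standard lower bound $R(x)>x/(1+x^2)$ for $x>0$ (and trivially for $x\le 0$, since then $-x\ge 0$ and $R>0$). Applying convexity to the pair $c-s,c+s$ symmetric about $c$ yields $R(c-s)-R(c)\ge R(c)-R(c+s)>0$, the strict inequality holding because $R$ is strictly decreasing. Second, for $c\ge 0$ and $s>0$ we have $(c+s)^2\ge(c-s)^2$, hence $\phi(c-s)\ge\phi(c+s)>0$. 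Multiplying the larger density weight against the larger increment shows the braced quantity is nonnegative, so $f'(c)\ge 0$. The main obstacle I anticipate is exactly this last combination: one must establish convexity of the Mills ratio and then pair the inequalities correctly, matching the larger weight $\phi(c-s)$ with the larger increment $R(c-s)-R(c)$ — and it is precisely the hypothesis $c\ge 0$ that forces the density comparison to point in the required direction.
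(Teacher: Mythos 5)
Your proof is correct, and parts (a) and (b) match what the paper does (it dismisses them as elementary algebra; your explicit tail-ratio computation for (b) is fine). For part (c), however, you take a genuinely different route. Both arguments begin from the sign of $N'D-ND'$, but the paper groups the terms as $g(c+s)-g(c)$ with $g(c)=(1-\Phi(c))\phi(c-s)-(1-\Phi(c-s))\phi(c)$ and runs a two-stage monotonicity reduction: $f'>0$ iff $g(c+s)>g(c)$, which holds if $g$ is increasing, which in turn reduces to the claim that $h(c)=\phi'(c)/(1-\Phi(c))$ is decreasing, verified by differentiating once more and invoking the Gaussian tail bound $\sqrt{2\pi}\,c^2(1-\Phi(c))<c\,e^{-c^2/2}$. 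You instead symmetrize the derivative directly into $\phi(c)\left\{\phi(c-s)\,[R(c-s)-R(c)]-\phi(c+s)\,[R(c)-R(c+s)]\right\}$ with $R$ the Mills ratio, and close by pairing two inequalities: strict convexity of $R$ (from $R''=(1+x^2)R-x>0$ via the lower Mills bound) orders the increments around the midpoint $c$, while the hypothesis $c\ge 0$ orders the weights $\phi(c\mp s)$. What your version buys is transparency about where $c\ge 0$ enters (it is exactly what forces $\phi(c-s)\ge\phi(c+s)$), a single reusable analytic fact (convexity of the Mills ratio) in place of the paper's ad hoc function $h$, and an automatic treatment of $s<0$ by symmetry, which the paper handles as a separate case. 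What the paper's version buys is that it needs only first-order information about $R$ (no second derivative); on the other hand its intermediate claim that $h$ is globally decreasing is really only valid, and only needed, on $[0,\infty)$ together with a sign comparison across zero, a subtlety your decomposition sidesteps entirely. Both proofs are sound; yours is arguably the cleaner of the two.
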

 \begin{pro}
 
Points a) and b) easily follow by elementary algebra. To prove point c) 
  let us define
$$
g(c) := (1 - \Phi(c)) \phi(c-s) -  (1 - \Phi(c - s)) \phi(c) \;.
$$
Then straight forward calculations yield
$$
f'(c) > 0  \quad  \Leftrightarrow  \quad   g(c+s) > g(c)
$$
Let us consider at first the case of $s > 0$. In this situation it is enough to show that g(c) is increasing. We find
$$
g'(c) = (1 - \Phi(c)) \phi'(c-s)  - (1 - \Phi(c-s)) \phi'(c)
$$
and define   
$h(c) = \frac{\phi'(c)}{1 - \Phi(c)}$. Then clearly
$$
g'(c) > 0 \quad  \Leftrightarrow  \quad   h(c-s) > h(c).
$$

To show that $h(c)$ is a decreasing function observe that
$$
h'(c)=\frac{1}{2\pi(1-\Phi(c))^2}e^{-c^2/2}\left(\sqrt{2\pi}(c^2-1)(1-\Phi(c))-ce^{-c^2/2}\right) \; .
$$
Now, the standard bound on the tail of the normal distribution yields
$$
\sqrt{2\pi}c^2(1-\Phi(c))<ce^{-c^2/2}\;\;,
$$
which implies that $h'(c)<0$.

 The proof for $s<0$ goes analogously. In that case $g(c)$ has to be decreasing, which yields  $h(c) > h(c - s)$, and again $h(c)$ is a decreasing function. 
 
\end{pro} 
 
The following Lemma \ref{WS_L2} easily follows from Lemma \ref{Lem:pointmass}.
\begin{lem}\label{WS_L2}
Let $\nu(\cdot)$ be any probability measure such that $\nu({0})<1$. Let us define 
\begin{equation}\label{Hc}
H(c):= \frac{\int_R (2 - \Phi(c-\sqrt{n}\mu/\s) - \Phi(c+\sqrt{n}\mu/\s)) d\nu(\mu)}{2(1 - \Phi(c))}\;\;.
\end{equation}
 Then it holds
\begin{itemize}
\item[a)] $H(0)=1$,
\item[b)] $\lim_{c\rightarrow \infty} H(c)=\infty$,
\item[c)] $H(c)$ is  increasing on $[0, \infty]$.
\end{itemize}
\end{lem}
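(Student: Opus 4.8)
The plan is to recognize $H(c)$ as an average over $\nu$ of the single-signal functions analysed in Lemma \ref{Lem:pointmass}, and then to transfer each of the three asserted properties through the integral, paying attention to the possible atom of $\nu$ at the origin. Concretely, since the factor $2(1 - \Phi(c))$ in the denominator of (\ref{Hc}) does not depend on $\mu$, I would first pull it inside the integral and write
$$
H(c) = \int_{\mathbb R} f_\mu(c)\, d\nu(\mu), \qquad f_\mu(c) := \frac{2 - \Phi(c - \sqrt n \mu/\sigma) - \Phi(c + \sqrt n \mu/\sigma)}{2(1 - \Phi(c))} \; .
$$
For $\mu \neq 0$ the integrand $f_\mu$ is exactly the function of Lemma \ref{Lem:pointmass} with $s = \sqrt n \mu/\sigma \neq 0$, so $f_\mu(0) = 1$, $f_\mu$ is strictly increasing on $[0,\infty)$, and $f_\mu(c) \to \infty$ as $c \to \infty$. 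For $\mu = 0$ one checks directly that $f_0(c) \equiv 1$. I also note that $0 \leq f_\mu(c)$ for all $c$ and $\mu$, since $\Phi \leq 1$ forces the numerator to be nonnegative.

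Part a) is then immediate: $H(0) = \int f_\mu(0)\, d\nu(\mu) = \int 1\, d\nu = 1$, because $f_\mu(0) = 1$ for every $\mu$ (including $\mu = 0$) and $\nu$ is a probability measure. For the monotonicity in c), I would take $0 \leq c_1 < c_2$; since $f_0$ is constant its contribution to $H(c_2) - H(c_1)$ vanishes, so
$$
H(c_2) - H(c_1) = \int_{\{\mu \neq 0\}} \bigl(f_\mu(c_2) - f_\mu(c_1)\bigr)\, d\nu(\mu) \; .
$$
On $\{\mu \neq 0\}$ the integrand is strictly positive by Lemma \ref{Lem:pointmass}, and the hypothesis $\nu(\{0\}) < 1$ gives $\nu(\{\mu \neq 0\}) > 0$; hence the integral is strictly positive and $H$ is strictly increasing.

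For the divergence in b), I would fix any sequence $c_k \uparrow \infty$. For each $\mu$ the sequence $f_\mu(c_k)$ is nonnegative and nondecreasing in $k$ (increasing for $\mu \neq 0$, constant for $\mu = 0$), so the monotone convergence theorem yields
$$
\lim_{k\to\infty} H(c_k) = \int_{\mathbb R} \lim_{k\to\infty} f_\mu(c_k)\, d\nu(\mu) \; .
$$
The pointwise limit equals $+\infty$ on $\{\mu \neq 0\}$, and since that set carries positive $\nu$-mass the right-hand side is $+\infty$; monotonicity of $H$ then upgrades this to $\lim_{c\to\infty} H(c) = \infty$.

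The only genuinely delicate points are the bookkeeping at the atom $\mu = 0$ --- where $f_0$ degenerates to the constant $1$ and therefore contributes nothing to either the monotonicity or the divergence --- and the justification of the limit interchange, for which the nonnegativity and monotonicity in $c$ of each $f_\mu$ make the monotone convergence theorem directly applicable. No estimate beyond those already supplied by Lemma \ref{Lem:pointmass} is required, which is why the statement can fairly be said to follow easily from it.
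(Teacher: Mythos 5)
Your proposal is correct and follows exactly the route the paper intends: the paper's ``proof'' is the single sentence that Lemma \ref{WS_L2} easily follows from Lemma \ref{Lem:pointmass}, and you have simply written out the integration over $\nu$ that this entails, with careful (and correct) handling of the atom at $\mu=0$ and of the limit interchange via monotone convergence. No discrepancy with the paper's argument.
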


\begin{lem}\label{final}

Let $\nu(\cdot)$ be any probability measure such that $\nu({0})<1$. 
Let 
$$
BFDR(c)=\frac{(1-p)t_{1}(c)}{(1-p)t_{1}(c)+ p\ (1-t_{2}(c))}\;\;,
$$
where 
$$
t_1(c) = 2(1-\Phi(c))
$$
and
$$
t_2(c) = 1-\int\limits_{\mathbb R}(\Phi(-c -\sqrt n \mu/\sigma)+1-\Phi(c - \sqrt n \mu/\sigma))d \nu \; .
$$

Then $BFDR(c)$ is continously decreasing from $1-p$ for $c=0$ to 0 for $c\rightarrow \infty$.
\end{lem}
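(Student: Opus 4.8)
The plan is to express $BFDR(c)$ entirely in terms of the auxiliary function $H(c)$ from Lemma \ref{WS_L2}, and then read off both the monotonicity and the boundary values directly from the properties already established there. In effect this lemma is a corollary of Lemma \ref{WS_L2}, so the work reduces to an algebraic identification.

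First I would rewrite $1 - t_2(c)$. Applying the elementary identity $\Phi(-x) = 1 - \Phi(x)$ to the first term of the integrand gives
$$
1 - t_2(c) = \int_{\mathbb R} \bigl(2 - \Phi(c + \sqrt n\, \mu/\sigma) - \Phi(c - \sqrt n\, \mu/\sigma)\bigr)\, d\nu(\mu),
$$
which is exactly the numerator appearing in the definition (\ref{Hc}) of $H(c)$. Since $t_1(c) = 2(1 - \Phi(c))$ is precisely the denominator of $H(c)$, this yields the key identity $1 - t_2(c) = t_1(c)\, H(c)$.

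Next I would divide numerator and denominator of $BFDR(c)$ by $t_1(c)$ to obtain
$$
BFDR(c) = \frac{(1-p)\, t_1(c)}{(1-p)\, t_1(c) + p\,(1 - t_2(c))} = \frac{1-p}{(1-p) + p\, H(c)}.
$$
For $p \in (0,1)$ the map $h \mapsto \frac{1-p}{(1-p) + p\, h}$ is continuous and strictly decreasing on $[0,\infty)$, so the qualitative behaviour of $BFDR$ is entirely controlled by that of $H$.

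Finally I would invoke Lemma \ref{WS_L2}, whose hypothesis is exactly $\nu(\{0\}) < 1$ and which provides $H(0) = 1$, $H(c) \to \infty$ as $c \to \infty$, and $H$ continuous and increasing on $[0,\infty)$. Substituting $H(0) = 1$ gives $BFDR(0) = \frac{1-p}{(1-p)+p} = 1-p$, while $H(c) \to \infty$ forces $BFDR(c) \to 0$; composing the strictly decreasing outer map with the increasing $H$ shows that $BFDR$ is continuous and decreasing. There is essentially no serious obstacle here: the only step requiring attention is the algebraic identification of $1 - t_2(c)$ with $t_1(c)\, H(c)$, after which the claim follows immediately from Lemma \ref{WS_L2}.
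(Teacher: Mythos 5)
Your proof is correct and follows essentially the same route as the paper: the paper likewise observes that $BFDR(c) = \bigl(1 + \tfrac{p}{1-p}H(c)\bigr)^{-1}$ and then cites Lemma \ref{WS_L2}. Your version simply spells out the algebraic identification $1 - t_2(c) = t_1(c)\,H(c)$ that the paper leaves implicit.
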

\begin{pro}
Observe that 
$$BFDR(c)=\frac{1}{1+\frac{p}{1-p}H(c)}\;\;,$$
with $H(c)$ as in (\ref{Hc}).
Thus Lemma \ref{final} is a direct consequence of Lemma \ref{WS_L2}.
\end{pro}

\subsection{Proof of Theorem \ref{TH_BFDR}} \label{Sec:App4}
\ \\

\begin{pro}

Let us define $u^B_n=c_B \sigma/ \sqrt{n}$. First we want to show that $u^B_n$ is bounded.  Assume on the contrary that for some subsequence $u^B_j \rightarrow \infty$. It holds for any constant $K>0$, that
\bea
& & \int\limits_{\mathbb R} \Phi(\sqrt j (-u^B_j - \mu)/\sigma)d \nu+ 1 - \int\limits_{\mathbb R} \Phi(\sqrt j (u^B_j - \mu)/\sigma)d \nu    \\
 &\geq &(\nu(-\infty,-K)+\nu(K,\infty)) (1 - \Phi(\sqrt j (u^B_j - K)/\sigma)) \; .  
\eea
If $u^B_j \rightarrow \infty$ we can apply  the tail approximation for the normal distribution and obtain from (\ref{BFDR_rule})
$$
\frac{\alpha_j}{f} \leq \frac{(1 -\alpha_j)(u^B_j - K)}{(\nu(-\infty,-K)+\nu(K,\infty))u^B_j}
 \exp\left(- \frac{j(u^B_j K - K^2)}{2 \sigma^2}  \right)
(1 + o_j) \; .
$$
But on the other hand the second assumption of (\ref{con3_general}) yields $\left(\frac{\alpha_j}{f}\right)^{1/j} \rightarrow \exp(- C_0)$ which contradicts $u^B_j \rightarrow \infty$.

If $u_j: = u_j^B \rightarrow u < \infty$   then the denominator of (\ref{BFDR_rule}) converges  to a constant $C_{\nu,u}= 1- \nu(-u,u)$.  Under the first assumption of (\ref{con3_general}) equation (\ref{BFDR_rule}) can only hold if $\sqrt j u_j \rightarrow \infty$. Thus we can apply again the tail approximation to obtain
$$
\frac{\alpha_j}{f} = \sqrt{\frac{2}{\pi}}\frac{1 - \alpha_j}{c_B C_{\nu,u}} \exp(- \frac{c_B^2}{2})(1 + o_j) \; .
$$
Taking logarithms and some simple calculations yield
$$
c^2_B =  2\log \left(\frac{f}{\alpha}\right)
- \log \left(  2 \log \left(\frac{f}{\alpha}\right)\right) +
\log \left(\frac{2}{\pi}\right)+2\log\left(\frac{1 - \alpha_\infty}{C_{\nu,u}}\right) + o_j \;\;.
$$
Now, the second condition  in (\ref{con3_general}) implies that $u=\sigma \sqrt{2 C_0}$, which completes the proof of (\ref{BFDR2_general}).

The critical value has exactly the same form as in the case of normal distributions and the result on ABOS follows exactly the same way as in \citep{Bog2009}. Define $s_n := \frac{\log(f \delta \sqrt n)}{\log(f/\alpha)} - 1$, then necessary and sufficient conditions for optimality are $s_n \rightarrow 0$ and $2s_n \log(f/n) - \log \log(f/\alpha) \rightarrow -\infty$ which immediately provides (\ref{BFDR_opt}). From the first equation in (\ref{BFDR_opt}) it follows that in case of ABOS $C_0 = C/2$, where $C$ is the constant from Assumption (A).
\end{pro}

\subsection{Lemmas needed for Theorem \ref{TH:BH}}\label{ApSec:Th2.4}

To prove optimality of the type II risk component of SD in the denser case 
we first  show that with  large probability the random threshold of SD is bounded from above by the asymptotically optimal threshold $\tilde c_{1n}$.

\begin{lem}\label{LSD2}
Let $c_{SD}$ be the random threshold SD threshold at the level $\alpha_n$ and let $\tilde c_1 = \tilde c_{1n}$ be the GW threshold (\ref{GW2}) at the level
$\alpha_{1n}=\alpha_n \xi_m$, where $\xi_m = (\log m)^{-s}$ with $s > 1$. Suppose that Assumptions (A) and (C), (\ref{pm}), (\ref{alpha}) and (\ref{ularge})  hold with $\alpha=\alpha_n$.   
Then $\tilde c_1$ is ABOS. Moreover, for every $\gamma_u > 0$ it holds for sufficiently large $m = m_n$ that
\beq \label{bound:cSD}
P(c_{SD}\geq \tilde c_1 )\leq m^{- \gamma_u}\;\;\;.
\eeq
\end{lem}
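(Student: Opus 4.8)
The plan is to prove the two assertions in turn: first that the deterministic threshold $\tilde c_1$ inherits ABOS from the level-$\alpha_n$ rule, and then that the random step-down threshold $c_{SD}$ overshoots $\tilde c_1$ only with super-polynomially small probability.

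For ABOS of $\tilde c_1$ I would not analyse the threshold directly but reduce it, via Theorem \ref{TH:GW}, to the statement that the BFDR controlling rule at level $\alpha_{1n}=\alpha_n\xi_m$ is ABOS, which by Theorem \ref{TH_BFDR} amounts to verifying the two conditions in (\ref{BFDR_opt}) with $\alpha$ replaced by $\alpha_{1n}$. The passage from $\alpha_n$ to $\alpha_{1n}$ only inserts the additive term $\log(1/\xi_m)=s\log\log m$ into $\log(f/\alpha_{1n})=\log(f/\alpha_n)+s\log\log m$. Since $\log(f/\alpha_n)\sim|\log(p\alpha_n)|$ and the second condition of (\ref{ularge}) gives $\frac{\log\log m}{\log(p\alpha_n)}\to 0$, this insertion is of smaller order, so that $\log(f/\alpha_{1n})=\log(f/\alpha_n)(1+o_n)$ and $\log\log(f/\alpha_{1n})=\log\log(f/\alpha_n)+o_n$. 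Hence the first ratio in (\ref{BFDR_opt}) still tends to $1$, while the second expression only acquires the extra nonpositive term $-2s\log\log m$ and therefore still diverges to $-\infty$; thus the level-$\alpha_{1n}$ BFDR rule, and with it $\tilde c_1$, is ABOS.

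For the tail bound I would first record the deterministic cap $c_{SD}\le c_{Bon}$: for $y>c_{Bon}$ one has $1-\hat F_m(y)+1/m\ge 1/m$, so the ratio in (\ref{cSD}) is at most $2m(1-\Phi(y))<\alpha_n$. Writing $N(y)=\#\{i:|Z_i|>y\}$, so that $1-\hat F_m(y)=N(y)/m$ and $N(y)\sim\mathrm{Bin}(m,1-F(y))$ with $F$ the c.d.f.\ of $|Z_1|$ under (\ref{barX}), the definition (\ref{cSD}) then yields the inclusion
\[
\{c_{SD}\ge \tilde c_1\}\ \subseteq\ \Big\{\exists\,y\in[\tilde c_1,c_{Bon}]:\ N(y)+1<\tfrac{2m(1-\Phi(y))}{\alpha_n}\Big\}.
\]
I would discretise by the deterministic points $y_k$ defined through $\frac{2m(1-\Phi(y_k))}{\alpha_n}=k+1$, for $k=1,\dots,K$ with $K=\lfloor \xi_m\,E[N(\tilde c_1)]\rfloor$; since $N$ is non-increasing and integer valued, the event on the right is contained in $\bigcup_{k=1}^{K}\{N(y_k)\le k\}$. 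The crucial identity is $E[N(y_k)]=m(1-F(y_k))=\alpha_n(k+1)/Q(y_k)$, where $Q(y)=\frac{2(1-\Phi(y))}{1-F(y)}$ is the exact FDR curve, for which $Q(\tilde c_1)=\alpha_n\xi_m$ by the GW definition (\ref{GW2}) of $\tilde c_1$. Because $Q$ is non-increasing on the tail $[\tilde c_1,\infty)$, one obtains $Q(y_k)\le\alpha_n\xi_m$ and hence $E[N(y_k)]\ge (k+1)/\xi_m=(k+1)(\log m)^s$ uniformly in $k$.

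This uniform lower bound makes each $\{N(y_k)\le k\}$ an extreme lower deviation by the vanishing factor $\xi_m$, so a Chernoff bound for the binomial gives $P(N(y_k)\le k)\le\exp(-\tfrac12 E[N(y_k)])\le\exp(-(k+1)(\log m)^s/2)$, and summing the geometric series over $k$ leaves $P(c_{SD}\ge\tilde c_1)\le 2\exp(-(\log m)^s)\le m^{-\gamma_u}$ for every fixed $\gamma_u$ once $m$ is large, precisely because $s>1$ forces $(\log m)^s\gg\gamma_u\log m$. I expect the genuine difficulty to lie in the monotonicity input $Q(y_k)\le\alpha_n\xi_m$ over the whole range $[\tilde c_1,c_{Bon}]$: this is where the Gaussian location-mixture structure of (\ref{barX}) must be exploited to show that the true FDR curve does not rise again in the tail, and where the regularity from Assumption (C) together with the denseness assumption (\ref{dense}) and $n\le m^{\gamma_2}$ (which keep the signal contribution $p\int_{\mathbb R}[\Phi(-y-\sqrt n\mu/\s)+1-\Phi(y-\sqrt n\mu/\s)]\,d\nu(\mu)$ comparable to the mean of $N$ up to $c_{Bon}$) enter. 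If a clean monotonicity of $Q$ proves awkward for general $\nu$, the fallback is to bound $E[N(y_k)]$ from below directly by this signal part of $1-F(y_k)$ and to argue uniformly along the lines of the proof of Lemma 5.4 in \citep{Bog2009}.
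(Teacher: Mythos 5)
Your first assertion is handled exactly as in the paper: ABOS of $\tilde c_1$ follows by checking that $\alpha_{1n}=\alpha_n\xi_m$ still satisfies the hypotheses of Theorem \ref{TH_BFDR} (the extra additive $s\log\log m$ in $\log(f/\alpha_{1n})$ being negligible thanks to the second condition of (\ref{ularge})) and then invoking Theorem \ref{TH:GW}. For the tail bound your route is genuinely different. The paper converts $\{c_{SD}\ge\tilde c_1\}$, via the monotonicity of $\tilde H(c)=2(1-\Phi(c))/(1-F(c))$, into the event that the empirical process of the uniformized statistics crosses the line $u\mapsto -1/m+u\xi_m$, and then applies the exact line--crossing formula of Shorack and Wellner (their Proposition 9.1.1) followed by a Stirling computation. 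You instead discretise the tail at the deterministic levels $y_k$ defined by $2m(1-\Phi(y_k))/\alpha_n=k+1$, reduce the event to a union of binomial lower--deviation events $\{N(y_k)\le k\}$ with $E[N(y_k)]\ge (k+1)(\log m)^s$, and finish with Chernoff plus a geometric series. Both yield a bound of order $\exp(-c(\log m)^{s})$, which beats every $m^{-\gamma_u}$ precisely because $s>1$; your version is more elementary and self-contained (no boundary--crossing machinery), at the cost of a union bound and a discretisation argument.

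Two points need tidying. First, the monotonicity input you single out as the likely difficulty --- $Q(y)\le Q(\tilde c_1)=\alpha_n\xi_m$ for all $y\ge\tilde c_1$ --- is not the hard part: it is exactly Lemma \ref{WS_L2} (built on Lemma \ref{Lem:pointmass}), which the paper's own proof also relies on, and it holds for an arbitrary mixing measure $\nu$ with $\nu(\{0\})<1$; no appeal to Assumption (C), to (\ref{dense}) or to $n\le m^{\gamma_2}$ is needed there, so your proposed fallback is unnecessary. Second, your union must start at $k=0$ rather than $k=1$: the event $\{c_{SD}\ge\tilde c_1\}$ can be realized with $\max_i|Z_i|<\tilde c_1$, in which case $c_{SD}=c_{Bon}$ and the witnessing $y$ has $N(y)=0$ with $g(y)\in(1,2)$, which is not contained in $\{N(y_1)\le 1\}$ but is contained in $\{N(y_0)\le 0\}$ with $y_0=c_{Bon}$; since $E[N(y_0)]\ge 1/\xi_m=(\log m)^s$ the identical Chernoff bound applies, so the fix is cosmetic. (The strict--versus--weak inequality at $y=c_{SD}$ coming from the supremum in (\ref{cSD}) is a right--continuity technicality that the paper's argument also passes over.)
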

\begin{proof}
Based on the second condition in (\ref{ularge}) it is easy to show that $\alpha_{1n}$ satisfies the asymptotic optimality assumptions provided in  Theorem \label{TH_BFDR}. Thus, Theorem   \ref{TH:GW} immediately yields that $\tilde c_{1}$ is ABOS.

To prove the second assertion of the Lemma we first note that by Lemma \label{WS_L2} the function $\tilde H(c) := \frac{2(1-\Phi(c))}{1 - F(c)}$ is  decreasing. Therefore according to the definition of $\tilde c_{1}$,
\beq \label{bound:cSD1}
\left\{c_{SD}\geq\tilde c_{1}\right\} = \left\{\tilde H(c_{SD}) \leq \alpha_{1n}\right\}\;\;.
\eeq
On the other hand the definition of $c_{SD}$ actually gives
$$
\frac{2(1-\Phi(c_{SD}))}{1-\hat F_m(c_{SD}) + 1/m} = \alpha_n
$$
and thus
$$
\left\{c_{SD}\geq\tilde c_{1}\right\} =
\left\{
\frac{1-\hat F_m(c_{SD})+ 1/m}{1- F(c_{SD})} \leq \xi_m
\right\} \; .
$$
Taking another intersection of the right hand side with $\left\{c_{SD}\geq\tilde c_{1}\right\} $ we can conclude that
\beq \label{bound:cSD2}
P(c_{SD}\geq\tilde c_{1}) \leq P\left(\inf\limits_{c \geq \tilde c_{1}} \frac{1-\hat F_m(c)+ 1/m}{1- F(c)} \leq \xi_m \right) \; .
\eeq
Using the standard transformation $U_i = F(|Z_i|)$ one obtains
$$
P(c_{SD}\geq\tilde c_{1}) \leq P\left(\inf\limits_{t \in [z_{1m}, 1]} \frac{1-\hat G_m(t)+ 1/m}{1- t} \leq \xi_m \right)
$$
where  $z_{1m}=F(\tilde c_1)$, and $\hat G_m(t)$ is the empirical cdf of $U_1,\ldots,U_m$. Now, using the transformation $u = 1-t$ and observing that $V_i=1-U_i$ also has a uniform distribution  we obtain
$$
P(c_{SD}\geq\tilde c_{1}) \leq P\left(\inf\limits_{u \in [0, 1 - z_{1m}]} \frac{\hat G_m(u) + 1/m}{ u} \leq \xi_m \right)
$$
This is equivalent to computing the probability that the empirical process $\hat G_m(u)$ intersects the line $L = -\frac{1}{m} + u \xi_m$ within the interval $[\frac{1}{m \xi_m},1 - z_{1m}]$.
 For this type of problem Proposition 9.1.1 of \cite{SW}
  can be applied. Define the event
$$
B_i = \{\hat G_m(u) \mbox{ intersects the line }  y = (u - a)/(bm)  \mbox{ at height }
i/m \mbox{ but not below}\}
$$
Then
$$
P(B_i) = {m \choose i} a(a + ib)^{i-1} (1 - a - ib)^{m-i} \;.
$$
 In our case  $a = b = \frac{1}{m \xi_m}$  and thus
$$
P(B_i) = {m \choose i} \frac{1}{m \xi_m} \left(\frac{1 + i}{m \xi_m}  \right)^{i-1} \left(1 - \frac{1 + i}{m \xi_m}  \right)^{m-i}
\quad \mbox{ for }  \quad  i < m \xi_m - 1 \; 
$$
and $P(B_i)=0$ for $i \geq m \xi_m - 1$.

Now, similar to Lemma 10.3.1 of \cite{SW} (page 414) we can apply Stirling's formula, which  for $i < m \xi_m - 1$ yields
\bea
P(B_i) &<&
\frac{m !}{(i+1)!(m-i)!}  \left(\frac{1 + i}{m \xi_m}  \right)^{i} \left(1 - \frac{1 + i}{m \xi_m}  \right)^{m-i} \\
&<&
\frac{m^{m + 1/2} e^{-m} \sqrt{2\pi} \exp(1/12 m)}
{(i+1)^{i + 3/2} e^{-(i+1)} \sqrt{2\pi} (m-i-1)^{m - i + 1/2} e^{-(m-i)} \sqrt{2\pi}}
\left(\frac{1 + i}{m \xi_m}  \right)^{i} \left(1 - \frac{1 + i}{m \xi_m}  \right)^{m-i}
 \\
&<&
\frac{ \exp(1/12 m+1)}{\sqrt{2\pi}}
\frac{1}{(i+1)^{3/2} \sqrt{1 - i/m}}\ \xi_m^{-i}  \left(\frac{m - (1 + i)/\xi_m }{m -i}  \right)^{m-i}
\\
&<&
\frac{ \exp(1/12 m+1)}{\sqrt{2\pi}}
\frac{1}{(i+1)^{3/2} \sqrt{1 - i/m}}\ \xi_m^{-i}
\exp\left( -i \left(\frac{1+i}{i\xi_m}-1\right) 
\right)
\eea
In the last step we adapted the inequality $
\left(1 - \frac{i(\lambda - 1)}{n - i}  \right)^{n-i} <
e^{-i(\lambda - 1)}
$ used by Shorack and Wellner in the proof of Lemma 10.3.1. In summary we find that
$$
 P(B_i) < K \xi_m^{-i}   \exp(-(i+1)/\xi_m)  \; .
$$
for some constant $K$ which can be chosen such that it does not depend on $m$ or $i$.
As long as $\frac{1}{\xi_m}\exp(-1/\xi_m) < 1$ we then have
$$
P(c_{SD}\geq\tilde c_{1})  \leq K \sum\limits_{i=0}^\infty
  \xi_m^{-i}   \exp(-(i+1)/\xi_m)
  = K \ \frac{\exp(- 1/\xi_m)}{1 - \frac{1}{\xi_m}\exp(-1/\xi_m)} \; .
$$
Remembering that  $\xi_m = (\log m)^{-s}$ with $s > 1$ finally yields (\ref{bound:cSD}).

\end{proof}

The next lemma discusses the type II error component of the risk of SD.
\begin{lem} \label{Lem:type2_dense}
Under the assumptions of Theorem \ref{TH:BH}  the type II error component of the risk of SD satisfies
\begin{equation}\label{t2risk}
R_A \leq R_{B}(1+o_{m})\;\;.
\end{equation}
\end{lem}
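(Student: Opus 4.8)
The plan is to bound the expected number of type~II errors made by SD by comparing its \emph{random} threshold $c_{SD}$ with the fixed, asymptotically optimal threshold $\tilde c_1 = \tilde c_{1n}$ furnished by Lemma~\ref{LSD2}. Write $\mathcal{A} = \{i : \mu_i \neq 0\}$ for the (random) index set of true signals and $N_A = \#\{i \in \mathcal{A} : |Z_i| < c_{SD}\}$ for the number of missed signals, so that the type~II risk component is $R_A = \delta_A\, E[N_A]$. Since Lemma~\ref{LSD2} guarantees both that $\tilde c_1$ is ABOS and that $P(c_{SD} \geq \tilde c_1) \leq m^{-\gamma_u}$ for every $\gamma_u > 0$, I would split
\[
R_A = \delta_A\, E[N_A;\, c_{SD} < \tilde c_1] + \delta_A\, E[N_A;\, c_{SD} \geq \tilde c_1]
\]
and handle the two contributions separately.

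For the first term I would use the monotonicity of a symmetric threshold rule in its threshold. On the event $\{c_{SD} < \tilde c_1\}$ one has $\{|Z_i| < c_{SD}\} \subseteq \{|Z_i| < \tilde c_1\}$, so every signal missed by SD is also missed by the fixed rule with threshold $\tilde c_1$; hence $N_A \leq \tilde N_A := \#\{i \in \mathcal{A} : |Z_i| < \tilde c_1\}$ on this event. Dropping the indicator then gives
\[
\delta_A\, E[N_A;\, c_{SD} < \tilde c_1] \leq \delta_A\, E[\tilde N_A] = m p\, t_2(\tilde c_1)\, \delta_A,
\]
which is precisely the type~II risk component of the fixed rule with threshold $\tilde c_1$. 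Because $\tilde c_1$ is ABOS, its total risk equals $R^B(1+o_m)$, and in particular its type~II component is at most $R^B(1+o_m)$. This yields the leading term of the desired bound.

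For the second term I would use the crude deterministic bound $N_A \leq \#\mathcal{A} \leq m$ together with the tail estimate of Lemma~\ref{LSD2}, obtaining $\delta_A\, E[N_A;\, c_{SD} \geq \tilde c_1] \leq \delta_A\, m\, P(c_{SD} \geq \tilde c_1) \leq \delta_A\, m^{1-\gamma_u}$. The crucial point is that $\gamma_u$ may be chosen arbitrarily large. To finish I would compare this with the benchmark $R^B$ from Theorem~\ref{TH1_general}: under the denser regime (\ref{dense}) one has $mp > \log^{\gamma_1} m$, while (\ref{ularge}) gives $n \leq m^{\gamma_2}$ and Assumption~(B) gives $\log v = O(\log m)$. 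Consequently, whether $C=0$ (so $R^B/\delta_A \sim mp\,\sigma\sqrt{\log v/n}\,(\rho(0^-)+\rho(0^+))$ as in (\ref{optrisk_general})) or $C>0$ (so $R^B/\delta_A \sim mp\,\nu(-T,T)$ as in (\ref{optrisk1_general})), the quantity $R^B/\delta_A$ is bounded below by a fixed negative power of $m$ times a polylogarithmic factor. Taking $\gamma_u$ large enough (larger than $1+\gamma_2/2$ suffices in the worst case) then forces $m^{1-\gamma_u} = o(R^B/\delta_A)$, so the second term is $o(R^B)$. Adding the two contributions gives $R_A \leq R^B(1+o_m)$.

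The main obstacle I anticipate is exactly this last comparison: one must certify that the polynomially small probability of the rare event $\{c_{SD}\geq \tilde c_1\}$ really dominates the (possibly very small) oracle risk $R^B$. This forces one to extract a genuine lower bound on $R^B/\delta_A$ from the denser-case hypotheses (\ref{dense}) and (\ref{ularge})---which is where the second condition of (\ref{ularge}) and the upper bound $n \leq m^{\gamma_2}$ do the essential work---and then to select $\gamma_u$ accordingly. By contrast, the monotonicity step and the identification of the first contribution with the type~II risk of the ABOS threshold $\tilde c_1$ are essentially bookkeeping once Lemma~\ref{LSD2} is available.
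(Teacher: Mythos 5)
Your argument is essentially the paper's own proof: the same decomposition of the expected number of missed signals over $\{c_{SD} < \tilde c_1\}$ versus its complement, the same use of Lemma \ref{LSD2} to identify the first term with the type~II risk of the ABOS threshold $\tilde c_1$ and to bound the second by $\delta_A m\, P(c_{SD}\geq \tilde c_1)\leq \delta_A m^{1-\gamma_u}$, and the same choice $\gamma_u = 1+\gamma_2/2$ exploiting (\ref{dense}) and (\ref{ularge}) to absorb the remainder into $o(R^B)$. The only omission is the extremely sparse case covered by the hypotheses of Theorem \ref{TH:BH}, where (\ref{ularge}) --- and hence Lemma \ref{LSD2} --- need not be available; the paper disposes of that case separately by comparing SD with the more conservative Bonferroni correction, which is ABOS by Lemma \ref{L1}.
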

\begin{proof}
For the extremely sparse case (\ref{as_spars}) we have seen already that the result follows by comparing with the Bonferroni rule which is ABOS according to Lemma \ref{L1}. It remains to show the result for the denser case (\ref{dense}) and to note that both cases overlap.

Denote by $L_A$ the number of false negatives under the SD rule and let $\tilde c_1$ be defined as in Lemma \ref{LSD2}. Clearly
$$
E(L_A)\leq E(L_A| c_{SD}\leq \tilde c_1)P(c_{SD}\leq \tilde c_1)+ m P(c_{SD} >  \tilde c_1)\;\;,
$$
and furthermore
$$E(L_A| c_{SD}\leq \tilde c_1)P(c_{SD}\leq \tilde c_1)\leq  E L_1\;\;,$$
where $L_1$ is the number of false negatives produced by the rule based on the threshold $\tilde c_1$.
Since by Lemma \ref{LSD2} the rule based on $\tilde c_1$ is asymptotically optimal, it follows that $\delta_A EL_1=R_{opt}(1+o_{m})$. On the other hand 
 $P(c_{SD} >  \tilde c_1) \leq m^{-\gamma_u}$ for any $\gamma_u > 0$ if only $m$ is sufficiently large, and therefore
$$
R_A=\delta_A E L_A  \leq R_{B}(1+o_m)+\delta_A m^{1-\gamma_u}\;\;.
$$
Now by using assumptions  (\ref{dense}) and (\ref{ularge}), and choosing e. g. $\gamma_u = \gamma_2/2+1$, we conclude that $\delta_A m^{1-\gamma_u}=o(R_{opt})$, and the proof is thus complete.
\end{proof}

\subsection{Lemma needed for Theorem \ref{TH3}} \label{Sec:ApCons}
\ \\
\begin{lem} \label{Lem:consistency}
Assume that (\ref{as1}), (\ref{as2}), (\ref{as3}) as well as assumptions (B) and (C) hold. 
Then the following bounds are valid for the type I and  type II error rates of mBIC: 
\beq \label{t1t2_mBIC_bounds}
\frac{t_1}{p} = O\left( (n \log n)^{-1/2} \right),\quad t_2 = O\left(\sqrt{\frac{\log n}{n}} \right) \; .
\eeq
\end{lem}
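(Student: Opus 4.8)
The starting point is the observation already exploited in the proof of Theorem \ref{TH3}: under the orthogonality condition (\ref{as1}) the mBIC criterion coincides with the symmetric fixed-threshold rule that rejects $H_{0j}$ exactly when $\frac{n\hat\beta_j^2}{\sigma^2} > c^2$, where
$$
c^2 = \log n + 2\log m + d = \log(n m^2) + d \;.
$$
Since $d$ is a constant and $m\to\infty$, for all sufficiently large $m$ one has $c^2 \geq \log n$ and $c\to\infty$; moreover $m<n$ under the orthogonal design, so $\log(nm^2) = O(\log n)$ and thus $b := \sigma c/\sqrt n = O(\sqrt{\log n/n})$. The whole statement therefore reduces to bounding the two error probabilities of this single fixed-threshold rule, and I would treat them separately.

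For the type I error I would invoke the Mills-ratio bound $1-\Phi(c)\leq \phi(c)/c$. Because $\frac{n\hat\beta_j^2}{\sigma^2}$ has a central $\chi^2_1$ law under $H_{0j}$, its rejection probability is $t_1 = 2(1-\Phi(c))$, and substituting $c^2 = \log(nm^2)+d$ gives $e^{-c^2/2} = e^{-d/2}\,n^{-1/2}m^{-1}$, whence
$$
t_1 \leq \frac{2}{\sqrt{2\pi}\,c}\,e^{-c^2/2} \leq \frac{2\,e^{-d/2}}{\sqrt{2\pi\log n}}\cdot\frac{1}{m\sqrt n} = O\!\left(\frac{1}{m\sqrt{n\log n}}\right)\;.
$$
Dividing by $p$ and using the sparsity condition (\ref{as3}), which forces $mp\to s\in(0,\infty]$ and hence $1/(mp)=O(1)$, immediately yields the first bound $\frac{t_1}{p} = O\big((n\log n)^{-1/2}\big)$.

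For the type II error I would avoid the sharp expansion (\ref{t2_general}) and instead bound $t_2 = \int_{\mathbb R}\Psi_n(\mu)\,d\nu(\mu)$ directly, where $\Psi_n(\mu)=\Phi(\sqrt n(b-\mu)/\sigma)-\Phi(\sqrt n(-b-\mu)/\sigma)$ is the acceptance probability of the symmetric rule. Splitting the integral at $|\mu|=2b$: on the central region $|\mu|<2b$ I simply use $\Psi_n\leq 1$ together with Assumption (C) (with $C=0$), which provides a finite bound $M_\rho$ on the density $\rho$ in a neighbourhood of $0$; this gives a contribution at most $\nu(-2b,2b)\leq 4b\,M_\rho = O(\sqrt{\log n/n})$. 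On the outer region $|\mu|\geq 2b$ the symmetry $\Psi_n(\mu)=\Psi_n(-\mu)$ and the inequality $\Psi_n(\mu)\leq \Phi(\sqrt n(b-\mu)/\sigma)\leq \Phi(-c)=1-\Phi(c)=t_1/2$ show that this part is of order $t_1 = o(\sqrt{\log n/n})$. Adding the two contributions gives $t_2 = O(\sqrt{\log n/n})$, as required.

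The only genuinely delicate point is the second bound: the split radius must be chosen small enough that the density estimate from Assumption (C) applies (legitimate because $b\to 0$) yet large enough that the outer region feels only a single Gaussian tail. Everything else is routine, and this direct argument is in fact more transparent than the sharp expansion (\ref{t2_general}), since here only an upper bound of the correct order $\sqrt{\log n/n}$ is needed rather than the exact leading constant.
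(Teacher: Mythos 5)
Your proposal is correct. The type I bound is essentially identical to the paper's: both substitute $c^2=\log(nm^2)+d$ into the Gaussian tail approximation to get $t_1=O\bigl(m^{-1}(n\log n)^{-1/2}\bigr)$ and then divide by $p$, using $mp\rightarrow s\in(0,\infty]$ from (\ref{as3}) to absorb the factor $1/(mp)$. Where you genuinely diverge is in the type II bound. The paper writes $t_2=\int\Psi_n\,d\nu$ with the threshold $\sqrt{h_{n,m}}$ and then re-runs the sharp asymptotic analysis that produced (\ref{t2_general}) in the proof of Theorem \ref{TH1_general} (the split at a carefully chosen sequence $g_n$, the mean value theorem on the central piece, etc.), obtaining the exact leading term $\sigma(\rho(0^-)+\rho(0^+))\sqrt{(\log n+2\log m)/n}\,(1+o_{n,m})$ and only then discarding the constant via $m\leq n$. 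You instead split crudely at $|\mu|=2b$ with $b=\sigma c/\sqrt n$: on the central region you bound $\Psi_n\leq 1$ and use only the boundedness of $\rho$ near $0$ from Assumption (C) (legitimate since $b\rightarrow 0$, so $(-2b,2b)$ eventually sits inside the interval where the density exists), giving $O(b)=O(\sqrt{\log n/n})$; on the outer region the monotonicity of $\Phi$ and the symmetry of $\Psi_n$ give $\Psi_n\leq 1-\Phi(c)=t_1/2=o(\sqrt{\log n/n})$. This is shorter, self-contained, and uses strictly less of Assumption (C) (no existence of the one-sided limits $\rho(0^{\pm})$ is needed, only boundedness of the density); the price is that you lose the leading constant, which your argument does not need here but which the paper's computation delivers for free and which mirrors the constant appearing in the Bayes-oracle risk. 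Both routes are valid proofs of the stated $O$-bounds.
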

\begin{proof}
Let $h_{n,m} : = \log n + 2 \log m + d$. From the tail approximation of the standard normal distribution we immediately obtain
$$
t_1 \sim \sqrt{\frac{2}{\pi h_{n,m}}} e^{- h_{n,m}/2} \leq \frac{c}{m} (n \log n)^{-1/2}
$$
for some constant $c$. Using the fact that $mp \rightarrow s > 0$ from assumption (\ref{as3}) gives the first bound of (\ref{t1t2_mBIC_bounds}).

To bound type II error we proceed similarly as in the proof of Theorem \ref{TH1_general}. We have
$t_2 = \int \Psi_n(\mu) \ d\nu(\mu)$ with
$$
\Psi_n(\mu) = \Phi\left(\sqrt{h_{n,m}} - \frac{\sqrt n  \mu}{\sigma}\right)
- \Phi\left(-\sqrt{h_{n,m}} - \frac{\sqrt n  \mu}{\sigma}\right) \; .
$$

The asymptotic behavior of this integral is obtained by similar analysis like that leading to (\ref{t2_general}),
resulting in
\begin{equation}\label{t_2_app}
t_2  = \s (\rho(0^-)+\rho(0^+)) \frac{\sqrt{\log n + 2 \log m}}{\sqrt n} (1 + o_{n,m}),
\end{equation}
which completes the proof of Lemma \ref{Lem:consistency} (since $m\leq n$).

\end{proof}

\subsection{Proof of Theorem \ref{TH:modBIC}} \label{Sec:App5}
\ \\

\begin{pro}
In \citep{A} it was shown that the step-up procedure BH corresponds to the smallest local minimum of the selection criterion (\ref{FDR_proc}), whereas SD corresponds to the largest local minimum of (\ref{FDR_proc}). Now mBIC1 is searching for the global minimum of (\ref{mBIC1_pen}), but we can again consider the smallest local minimum as well as the largest local minimum of (\ref{mBIC1_pen}). These will correspond to step-up and step-down procedures based on the comparison
$$
\frac{n \hat\beta_{[k]}^2}{\sigma^2} \geq \log{nm^2} + d - 2 \log(k) - \log\log(n m^2/k^2) \;.
$$
Translating this comparison to the level of p-values when applying the usual tail approximation for the standard normal distribution yields
\beq \label{mBIC1_SU_SD}
p_{[k]} \leq \frac{A k}{m} \quad \mbox{with} \quad
A^2 = \frac{2e^{-d}}{\pi n  z(k,m,n)},
\eeq
where $z(k,m,n)= 1 + \frac{d - \log \log (n m^2 / k^2)}{\log (n m^2 / k^2)}$. Since for sufficiently large $n$
$$
1-\frac{2 \log \log n}{\log n}=z_1(n)<z(k,m,n)<z_2(n)=1-\frac{\log \log n}{6\log n}\;\;,
$$
it holds that mBIC1 can be sandwiched between the step-up and step-down BH procedures with the FDR levels    $\alpha_i= \sqrt{\frac{2e^{-d}}{\pi n  z_i(n)}},\  i=1,2$,
correspondingly. Since both $\alpha_i$ satisfy (\ref{BFDR_opt}) the conditions of Theorem  \ref{TH:BH}  are fulfilled and mBIC1 is itself ABOS.

Similar considerations  give the result for mBIC2, for which we obtain $z(k,m,n) = \log (n m^2 / k^2 + d)$ in (\ref{mBIC1_SU_SD}). Using the inequalities
$$
\log n < z(k,m,n) < 3 \log n
$$
for $n$ large enough to sandwich mBIC2 between step-up and step-down procedures, ABOS of mBIC2 follows immediately from the fact that  $\alpha \propto \frac{1}{\sqrt{n \log n}}$ fulfills (\ref{BFDR_opt}).

Finally for mBIC3 we get
$$
z(k,m,n) =
e^2  (1 - 1/k)^{2(k-1)}  \left(\log (n m^2 / k^2) + d + 2 + 2(k-1)\log(1 - 1/k)\right) \; ,
$$
and we find again
$
\log n < z(k,m,n) < 3 e^2 \log n
$ which yields ABOS as above.

The consistency result is obtained the following way. From ABOS and the Markov inequality  in (\ref{Markov}) one easily concludes that all three criteria are consistent exactly when the Bayes oracle is consistent. Consider the asymptotic formulas concerning  type II error  (\ref{t2_general}) and type I error (\ref{conpr}) for the special case $\delta = 1$. Then it immediately follows that the Bayes oracle is consistent under assumption (\ref{as3}).

\end{pro}

\subsection{Proof of Theorem 3.3}\label{ApSec:Th3.3}
\ \\

Recall that in our setting (two groups model,  orthogonality) it is reasonable to think in terms of type I error  (misclassification of a regressor under $H_0$) and type II error (misclassification of a true signal) for model selection procedures.  To prove Theorem 3.3 we first bound the type I and the type II errors in Lemma \ref{Lem:t1_suk} and Lemma \ref{Lem:t2_suk} respectively.   Both these results will be proved assuming minimal conditions under which the individual bounds on type I and type II errors hold. The conditions in Theorem 3.3 ensure that both lemmas hold and additionally that the overall upper bound on the total risk of mBIC is asymptotically equivalent to that of the Bayes Oracle. \\

To bound the type I error we will make use of the following corollary  given after Theorem 2 of Section 16.7, Vol.2 of \cite{Fell}.
\begin{cor}\label{Feller}
 Let $F$ be the common distribution function of i.i.d. random variables $X_1, \ldots, X_n$ with $E(X_i) = 0, \mbox{Var }(X_i) = \s^2$ and let $F_n$ be the distribution function of the normalized sum  $(X_1 + \cdots + X_n)/( \sqrt{n}\s)$. If $1< x = o(\sqrt{n})$, then for any $\e > 0$, for all sufficiently large $n$,
\beq \label{Cor_Feller}
\frac{\exp(-(1+\e)x^2/2)}{\sqrt{x}} < 1 - F_n(x) < \frac{\exp(-(1-\e)x^2/2)}{\sqrt{x}}
\eeq
\end{cor}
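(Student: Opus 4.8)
The statement is a moderate-deviations tail estimate, and the plan is to prove it by comparing the tail $1 - F_n(x)$ with the standard normal tail $1 - \Phi(x)$ and then invoking the elementary Gaussian tail asymptotics. Throughout I work in the regime $1 < x = o(\sqrt n)$ and, as is implicit in Feller's treatment, I assume the cumulant generating function $\Lambda(h) = \log E\,e^{h X_1}$ is finite in a neighbourhood of the origin (the Cram\'er condition); without such a hypothesis no bound of the stated exponential type can hold for merely square-integrable summands.

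First I would dispose of the Gaussian case. By the Mills-ratio asymptotic $1 - \Phi(x) \sim \frac{1}{\sqrt{2\pi}\,x}\,e^{-x^2/2}$ as $x \to \infty$, and since $x \to \infty$ throughout the relevant range, the factor $e^{\pm \e x^2/2}$ swamps the discrepancy between the prefactors $\frac{1}{\sqrt{2\pi}\,x}$ and $\frac{1}{\sqrt x}$. Hence, for every $\e > 0$ and all sufficiently large $x$,
\[
\frac{e^{-(1+\e)x^2/2}}{\sqrt x} \;<\; 1 - \Phi(x) \;<\; \frac{e^{-(1-\e)x^2/2}}{\sqrt x}\;,
\]
so the corollary holds verbatim with $F_n$ replaced by $\Phi$.

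The core of the argument is the comparison of $1 - F_n(x)$ with $1 - \Phi(x)$, which is exactly the large-deviation theorem of \cite{Fell} from which the corollary is drawn. I would derive it by exponential tilting: introduce the Esscher transform of $F$ at the saddle point $h = h_n(x)$ chosen so that the tilted law has per-summand mean $x\sigma/\sqrt n$, expand $\Lambda$ near $0$, and apply a local central limit theorem with Edgeworth correction to the tilted, recentred sum. This yields the Cram\'er expansion
\[
\log \frac{1 - F_n(x)}{1 - \Phi(x)} \;=\; \frac{x^3}{\sqrt n}\,\lambda\!\left(\frac{x}{\sqrt n}\right)(1 + o(1))\;,
\]
where $\lambda$ is the Cram\'er power series, convergent for small argument. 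Since $x = o(\sqrt n)$ we have $\frac{x}{\sqrt n} \to 0$, so $\lambda(x/\sqrt n)$ stays bounded, and $\frac{x^3}{\sqrt n} = x^2\cdot\frac{x}{\sqrt n} = o(x^2)$. Consequently the log-ratio is $o(x^2)$, and for any $\e > 0$ and all sufficiently large $n$,
\[
e^{-\e x^2/2} \;<\; \frac{1 - F_n(x)}{1 - \Phi(x)} \;<\; e^{\e x^2/2}\;.
\]

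It remains only to combine the two displays. Applying the first and third steps with $\e/2$ in place of $\e$ and multiplying gives
\[
1 - F_n(x) \;<\; e^{\e x^2/4}\,(1 - \Phi(x)) \;<\; e^{\e x^2/4}\,\frac{e^{-(1-\e/2)x^2/2}}{\sqrt x} \;=\; \frac{e^{-(1-\e)x^2/2}}{\sqrt x}\;,
\]
since $-(1-\e/2)x^2/2 + \e x^2/4 = -(1-\e)x^2/2$; the lower bound follows symmetrically from the left-hand inequalities, which establishes the corollary. The main obstacle is the large-deviation comparison of the third step, i.e.\ obtaining the Cram\'er expansion uniformly across the moderate-deviation range via the tilting-plus-Edgeworth argument; once that expansion is in hand the remaining steps are routine bookkeeping with the Gaussian tail. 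In the paper itself this step is simply imported from \cite{Fell}, so no tilting argument need be reproduced.
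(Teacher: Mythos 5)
Your argument is correct, but there is nothing in the paper to compare it against: the paper supplies no proof of this statement, importing it verbatim as ``the corollary given after Theorem 2 of Section 16.7, Vol.~2'' of Feller. What you have written is essentially the standard derivation of that corollary from the theorem it follows: the Mills-ratio asymptotics show the two-sided bound holds with $\Phi$ in place of $F_n$, Cram\'er's expansion gives $\log\bigl((1-F_n(x))/(1-\Phi(x))\bigr)=\frac{x^3}{\sqrt n}\lambda(x/\sqrt n)+O(x/\sqrt n)=o(x^2)$ because $x/\sqrt n\to 0$ keeps $\lambda$ bounded, and the $\e/2$ bookkeeping combines the two. The one step you do not prove --- the tilting-plus-Edgeworth derivation of the Cram\'er expansion --- is exactly the content of Feller's Theorem 2, so deferring it is consistent with what the paper itself does. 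You also correctly identify two hypotheses that the printed statement suppresses but that are genuinely needed: (i) the Cram\'er condition $E\,e^{hX_1}<\infty$ for $h$ near $0$, without which the upper bound fails (a finite-variance law with polynomial tails already violates it for $x\propto n^{1/4}$, since the single-summand contribution to the tail is only polynomially small) --- this is a standing assumption of Feller's Section XVI.7; and (ii) the requirement that $x\to\infty$ rather than merely $x>1$, since for a fixed $x>1$ one has $1-F_n(x)\to 1-\Phi(x)$ while $e^{-(1+\e)x^2/2}/\sqrt x$ exceeds $1-\Phi(x)$ for small $\e$, so the lower bound would fail. Neither omission causes trouble where the corollary is used (there $x=\sqrt n/(\sqrt2(\log n+2\log m))\to\infty$ and the errors are Gaussian), but your proof is right to make both explicit.
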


\begin{lem} \label{Lem:t1_suk}
Assume $n \rightarrow \infty$, $m=m(n) \rightarrow \infty$  and that (\ref{as1}), (\ref{as2}) and (\ref{as4}) hold.
Then the type I error probability of the decision rule based on mBIC criterion (\ref{mBIC}) is bounded by
\begin{equation}\label{ttypeI}
 t_1\leq \frac{C_1}{\sqrt{n}{m}}(1+o_{n,m})\;\;,
 \end{equation}
 with $C_1 = \frac{\sqrt{2}}{\sqrt{\pi}} \exp(-d/2)$.
\end{lem}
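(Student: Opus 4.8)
The plan is to reduce the event ``a null regressor is selected'' to a threshold event for its standardised least squares estimate, and then to bound the tail via the moderate--deviation estimate of Corollary~\ref{Feller}. First I would use orthogonality. Order the non-intercept coordinates so that $|\hat\beta_{(1)}|\geq|\hat\beta_{(2)}|\geq\cdots$, keep the intercept $\hat\beta_0$ in every model, write $RSS_k=\|Y\|^2-n\hat\beta_0^2-\sum_{i=1}^k n\hat\beta_{(i)}^2$ for the best model of size $k$, and set $h_{n,m}:=\log n+2\log m+d$. The criterion (\ref{mBIC}) minimises $g(k):=n\log RSS_k+k\,h_{n,m}$ over $0\leq k\leq L$. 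A null regressor at rank $k$ is selected only if the minimiser $\hat k$ obeys $\hat k\geq k$, i.e. $g(l)\leq g(k-1)$ for some $l\geq k$. Since $RSS_l/RSS_{k-1}=1-\sum_{i=k}^{l}n\hat\beta_{(i)}^2/RSS_{k-1}$ and $\hat\beta_{(i)}^2\leq\hat\beta_{(k)}^2$ for $i\geq k$, writing $r:=l-k+1\leq L$ this forces
\[ 1-e^{-r h_{n,m}/n}\leq\frac{r\,n\hat\beta_{(k)}^2}{RSS_{k-1}}\;. \]
Using $1-e^{-x}\geq x(1-x/2)$, minimising the induced lower bound on $n\hat\beta_{(k)}^2$ over $r\leq L$, and invoking (\ref{as4}) (which forces $L h_{n,m}/n\to 0$), I obtain the necessary condition $n\hat\beta_{(k)}^2\geq (RSS_{k-1}/n)\,h_{n,m}(1-\eta_n)$ for some $\eta_n\to 0$.

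Second I would pin the random normaliser $RSS_{k-1}/n$ to $\sigma^2$ from below, uniformly in $k-1\leq L$. Here the crucial structural fact is that in the orthogonal Gaussian design the coordinates $\hat\beta_l=\tfrac1n X_l'Y$ are jointly normal with $\mathrm{Cov}(\hat\beta_j,\hat\beta_l)=0$ for $j\neq l$, so $\hat\beta_j$ is independent of $\{\hat\beta_l\}_{l\neq j}$ and of $RSS_{full}$; conditioning on all coordinates other than $j$ I may treat the threshold as non-random. Since $RSS$ decreases with model size, $RSS_{k-1}\geq RSS_L=\|Y\|^2-n\hat\beta_0^2-\sum_{i=1}^L n\hat\beta_{(i)}^2$, and by the law of large numbers $\|Y\|^2/n\geq\sigma^2+\sum_l\beta_l^2-o_{n,m}$, while the subtracted block exceeds the signal energy $\sum_l\beta_l^2$ by at most the $L$ largest \emph{null} estimates, whose total is $O(L\sigma^2\log m/n)=o(1)$ by (\ref{as4}). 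Exponential concentration of $RSS_L/n$ around $\sigma^2$ then places us, off an event $\Omega_n$ with $P(\Omega_n^c)$ decaying exponentially in $n$ (hence negligible against the target rate $1/(\sqrt n\,m)$, since $m<n$), on the region where $RSS_{k-1}/n\geq\sigma^2(1-\eta_n')$; combined with Step~1 this sharpens the necessary condition to $n\hat\beta_j^2/\sigma^2\geq h_{n,m}(1-\eta_n'')$ with $\eta_n''\to 0$.

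Finally, for a null regressor $\sqrt n\,\hat\beta_j/\sigma$ is standard normal, and $x:=\sqrt{h_{n,m}(1-\eta_n'')}=O(\sqrt{\log n})=o(\sqrt n)$ because $m<n$. Applying the upper inequality of Corollary~\ref{Feller} with $\e\to 0$ chosen to dominate $\eta_n''$ gives
\[ t_1\leq 2\bigl(1-\Phi(x)\bigr)(1+o_{n,m})\leq\sqrt{\tfrac{2}{\pi}}\,e^{-h_{n,m}/2}(1+o_{n,m})=\frac{C_1}{\sqrt n\,m}(1+o_{n,m}), \]
where I used $e^{-h_{n,m}/2}=n^{-1/2}m^{-1}e^{-d/2}$ and $C_1=\sqrt{2/\pi}\,e^{-d/2}$; the residual factor $1/\sqrt{h_{n,m}}$ in the exact tail gives ample room to absorb the $\eta_n''$ slack, and adding $P(\Omega_n^c)$ leaves the rate unchanged.

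I expect the main obstacle to be the interplay of Steps~1 and~2. Because $k\mapsto g(k)$ need not be unimodal, ``null regressor selected'' cannot be read off a single one-step comparison, so the necessary condition must be extracted over all admissible model sizes, producing the factor $1/r$ that makes the bound $L h_{n,m}/n\to 0$ indispensable. Simultaneously the random normaliser $RSS_{k-1}/n$ must be tied to $\sigma^2$ sharply enough that none of the accumulated $(1-\eta)$ factors erodes the exponent $e^{-h_{n,m}/2}$, and the conditioning that justifies treating the threshold as fixed relies on the exact independence of $\hat\beta_j$ from the remaining coordinates. This is precisely why the size restriction (\ref{as4}) on $L$ and the adjustable $\e$ in the moderate--deviation bound of Corollary~\ref{Feller} are the two load-bearing ingredients of the argument.
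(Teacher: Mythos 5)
Your overall strategy coincides with the paper's: reduce the selection of a null regressor to a threshold event $n\hat\beta_j^2/\sigma^2\gtrsim h_{n,m}$, control the random normaliser $RSS/n$, and finish with a normal tail bound, so that the leading constant $C_1=\sqrt{2/\pi}\,e^{-d/2}$ comes out of $2(1-\Phi(\sqrt{h_{n,m}}))$. Your combinatorial reduction is genuinely different and in one respect cleaner: you exploit that under orthogonality the global minimiser lives on the nested path of best models of each size, and extract the necessary condition from $g(l)\leq g(k-1)$ for the minimiser $l\geq k$, which produces the factor $1/r$ and shows exactly where (\ref{as4}) enters via $Lh_{n,m}/n\to 0$. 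The paper instead observes that if $X_i$ is added at all then the one-step inequality $\log\bigl(RSS_M/(RSS_M-n\hat\beta_i^2)\bigr)\geq u_{n,m}$ holds for some $M$, and that this event is monotone in $M$, so it suffices to take the union over all $\binom{m}{L-1}<m^L$ models of maximal size. Both reductions are valid.

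The genuine gap is in your Step 2, the lower bound $RSS_{k-1}/n\geq\sigma^2(1-\eta_n')$. You invoke the law of large numbers for $\|Y\|^2/n$ and compare the subtracted block to the ``signal energy'' $\sum_l\beta_l^2$. But the lemma is stated (deliberately, as the paper notes) without any sparsity assumption and Assumption (C) imposes no moment condition on $\nu$, so neither $\sum_l\beta_l^2$ nor the number of true signals $K$ is controlled: the cross term $2\sum_j\beta_j(\hat\beta_j-\beta_j)$ is $O_P(\sqrt{n}\,\|\beta\|\,\sigma)$ and is only $o(n)$ if $\|\beta\|^2=o(n)$, which you have not established. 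Moreover $RSS_L$ is the residual of a \emph{data-selected} model (the $L$ largest coefficients), and you give no device to handle this selection --- the conditioning remark does not apply because $RSS_{k-1}$ contains the component $n\hat\beta_j^2$ through $\|Y\|^2$. The paper circumvents both problems at once: for each \emph{fixed} model $M$ of size $L$ it decomposes $RSS_M=W_k+Z_k$ with $W_k\sim\sigma^2\chi^2_{n-L-k}$ and $Z_k$ stochastically larger than $\sigma^2\chi^2_k$ (noncentral dominates central), so $RSS_M$ stochastically dominates $\sigma^2\chi^2_{n-L}$ regardless of $\nu$ and of how many signals there are; it then pays for the selection with a union bound over the $m^L$ models, and (\ref{as4}) is calibrated precisely so that $m^L\exp\bigl(-(1-\e)n/(4(\log n+2\log m)^2)\bigr)=o\bigl(1/(\sqrt{n}\,m)\bigr)$. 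This is where Corollary \ref{Feller} is actually load-bearing (a moderate-deviation bound for the $\chi^2$ variable), not in the final tail of $\sqrt{n}\hat\beta_j/\sigma$, which is exactly standard normal. To repair your argument you should either restrict to fixed models and reinstate the union bound as the paper does, or supply a selection-robust concentration bound for $RSS_L$ that does not rely on moments of $\nu$ or on $K=o(n)$.
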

\begin{pro}
Let $1 \leq i \leq m$. Then the probability of type I error correspoding to $\beta_i$ is given by
$$
t_{1i} = P(A_i|B_i),
$$
where $B_i$ denotes the event that $\beta_i=0$ and $A_i$ denotes the event that the corresponding regressor  is included in the model chosen by mBIC. Through exchangeability, it follows that $t_{1i} = t_1$, for each $1 \leq i \leq m$. Let us compare two models $M$ and $M\bigcup X_i$ where $ X_i \notin M$.  mBIC considers supplementing model $M$ with the variable $X_i$ only in the case
\begin{equation}\label{neq1}
\log \frac{RSS_M}{RSS_{M} - n \hat\beta_i^2} \geq  \frac{\log n + 2 \log m + d}{n}
\end{equation}
where $RSS_M = Y'Y - \sum\limits_{j \in M} n \hat\beta_j^2$ denotes the residual sum of squares of the model $M$ (we have used the orthogonality assumption (\ref{as1}) here). Henceforth we will use the abbreviations $Z_{i,M}:= \log \frac{RSS_M}{RSS_{M} - n \hat\beta_i^2}$ and $u_{n,m}:=\frac{\log n+2\log m + d}{n}\;$. Note that if $M_1 \subset M_2$ then $RSS_{M_1} \geq RSS_{M_2}$. Therefore $RSS_{M_1}/(RSS_{M_1} - n \hat\beta_i^2)\leq RSS_{M_2}/(RSS_{M_2} - n \hat\beta_i^2)$ and the event that a given false positive is added to the model $M_1$ is contained in the event that it is added to the model $M_2$. According to these considerations we obtain an upper bound for the type I error
$$t_1 \leq P(\bigcup_{M\in \Omega_L}
\{Z_{i,M} \geq u_{n,m}\}|B_i),$$
where  $\Omega_L$ is the set of all models with $L-1$ regressors in addition to the the common intercept term, such that $X_i \notin \Omega_L$. 
We bound the probability of the right hand side above in three intermediate steps.

\noindent {\bf Step 1:} Let  $T_i = \frac{\hat\beta_i^2}{\s^2}$ and $\epsilon_{n,m}=\frac{\log(\log n+2\log m)}{n}$. Then
$$\{Z_{i,M}  \geq u_{n,m}\}  \subset
\{ Z_{i,M} -  T_i  \geq \epsilon_{n,m}\} \cup
\{ T_i  \geq
u_{n,m}-\epsilon_{n,m}\}\;\;,$$
and therefore
\begin{eqnarray*}
 &&P(\bigcup_{M\in \Omega_L} \{Z_{i,M}  \geq u_{n,m}\}|B_i)\leq\\
&& P(\bigcup_{M\in \Omega_L} \{Z_{i,M} -  T_i \geq \epsilon_{n,m}\}|B_i)
+P(T_i\geq u_{n,m}-\epsilon_{n,m}|B_i)\;\;.\nonumber
\end{eqnarray*}
The second term on the right hand side of the above inequality can be expressed as
$$
P(T_i\geq u_{n,m}-\epsilon_{n,m}|B_i)=P\left(\frac{n\hat
\beta^2_i}{\sigma^2}\geq \log n + 2\log m + d - \log(\log n+2 \log
m)|B_i\right)\;\;,$$
and from the normal tail approximation we obtain
$$
P(T_i\geq
u_{n,m}-\epsilon_{n,m}|B_i)=C_1\frac{1}{\sqrt{n}{m}}(1+o_{n,m})\;\;,
$$
where $C_1=\sqrt{\frac{2}{\pi}} \exp(-d/2)$. Now to establish inequality (\ref{ttypeI}) it remains to be shown that  $P(\bigcup_{M\in \Omega_L} \{Z_{i,M} -  T_i\geq \epsilon_{n,m}\}|B_i)$ is of  lower order.

\vspace{3mm}

\noindent {\bf Step 2:}  Let $\delta_n = \frac{1}{n}$. Similar arguments as in Step 1 yield
$$\{Z_{i,M} -T_i \geq \epsilon_{n,m}\} \subset
\{Z_{i,M}>-\log(1-(T_i+\delta_{n}))\} \cup \{-T_i -\log(1-(T_i+\delta_{n}))\geq
\epsilon_{n,m}\}\;\;.$$
The first set on the right hand side can be rewritten as $\{\frac{n \hat\beta_i^2}{RSS_M}>T_i+\delta_{n}\}$ and therefore
\begin{eqnarray}
&&P(\bigcup_{M\in \Omega_L} \{Z_{i,M} -  T_i \geq
\epsilon_{n,m}\} |B_i)\nonumber\\
&&\leq P(\bigcup_{M\in \Omega_L}
\{\frac{n \hat\beta_i^2}{RSS_M} >T_i+\delta_{n}\}|B_i)+P(-T_i -\log(1-(T_i+\delta_{n}))\geq
\epsilon_{n,m}|B_i)\;\;.\nonumber
\end{eqnarray}
To bound the second term  note that $-\log (1-x) \leq x + 2x^2$ for $0 \leq x \leq 1/2$. Hence
$$P(-T_i -\log(1-(T_i+\delta_{n})|)\geq
\epsilon_{n,m} |B_i)\leq
P\left((T_i+\delta_{n})^2>\frac{\epsilon_{n,m}-\delta_n}{2}|B_i\right)+P(T_i +\delta_n \geq 1/2 |B_i)\;\;.
$$
First note that
$$
P\left((T_i+\delta_{n})^2>\frac{\epsilon_{n,m}-\delta_n}{2}|B_i \right)=P\left(nT_i>\frac{\sqrt{n\log(\log
n+2\log m)}}{2}(1+o_{n,m})|B_i\right)\;\;
$$
and for sufficiently large $n$ the normal tail approximation yields
$$
P\left((T_i+\delta_{n})^2>\frac{\epsilon_{n,m}-\delta_n}{4}|B_i\right)< \exp\left(-\frac{\sqrt{n}}{2}\right)\;\;.
$$
Second we have
$$
P(T_i + \delta_n \geq 1/2|B_i)=P(nT_i \geq n/2-1|B_i)\leq \sqrt{\frac{2}{\pi}} \exp(-(n/4-1/2)).
$$
Combining the two bounds obtained above, it follows that
$$
P(-T_i -\log(1-(T_i+\delta_{n}))\geq
\epsilon_{n,m}|B_i) = o\left(\frac{1}{\sqrt{n}{m}}\right),
$$
since $m < n$. \\
\noindent {\bf Step 3:} We will now bound the remaining term
$$
P\left(\bigcup_{M\in \Omega_L}
\left\{\frac{n \hat\beta_i^2}{RSS_M}>T_i+\frac{1}{n}\right\}|B_i\right)\;\;.
$$
Observing that
$$\left\{\frac{n\hat \beta_i^2}{\sigma^2}\left(\frac{n\sigma^2}{RSS_M}-1\right) > 1\right\}\subset\left\{\frac{n\hat \beta_i^2}{\sigma^2} > c_{n,m}\right\}\cup \left\{\frac{n\sigma^2}{RSS_M}-1 >  \frac{1}{c_{n,m}}\right\}\;,$$
where we choose $c_{n,m}=\log n+2\log m$, we conclude that
$$
P\left(\bigcup_{M\in \Omega_L} \left\{\frac{n \hat\beta_i^2}{RSS_M}>T_i+\frac{1}{n}\right\}|B_i\right)\leq P\left(\frac{n\hat \beta_i^2}{\sigma^2}\geq c_{n,m}|B_i\right)+ \sum_{M\in \Omega_L} P \left( \frac{n\sigma^2}{RSS_M}-1\geq \frac{1}{c_{n,m}}|B_i\right)\;\;.
$$
By the tail approximation of the standard normal distribution we obtain that for sufficiently large $n$
$$
P\left(\frac{n\hat \beta_i^2}{\sigma^2}\geq c_{n,m}|B_i\right)=\frac{1}{\sqrt{n}m}o_{n,m}\;\;.
$$
We now have to bound the remaining series. Fix any model $M \in \Omega_L$ and assume that $k$ true signals are not included in $M$. Under assumptions (\ref{as1}) and (\ref{as2}) it immediately follows that 
$RSS_M =  W_k + Z_k$, where $Z_k = n \sum \limits_{r=1}^k \hat{\beta}_{j_r}^2$  refers to the $k$ true signals which were not detected, and  $ W_k \sim \sigma^2 \chi^2_{(n-L-k)}$ is the remainder term. 
$Z_k$ and $W_k$ are independent and $Z_k$ is stochastically larger than a $\sigma^2 \chi^2_{k}$ distributed random variable. Therefore  $RSS_M$  is stochastically larger than  $\sigma^2 \chi^2_{(n-L)}$. But this argument holds for any $k$, and we conclude that
$$
P\left( \frac{n\sigma^2}{RSS_M}-1\geq \frac{1}{c_{n,m}}|B_i \right)\leq P\left( \frac{n}{\chi^2_{n-L}}-1 \geq \frac{1}{c_{n,m}}\right) \; .
$$
Now
$$
P\left( \frac{n}{\chi^2_{n-L}}-1\geq \frac{1}{c_{n,m}}\right) =
P\left(\frac{\chi^2_{n-L}-(n-L)}{\sqrt{2(n-L)}}\leq \frac{L-\frac{n}{1+c_{n,m}}}{\sqrt{2(n-L)}}\right)\;\;
$$
will be bounded using a normal tail approximation argument.
>From assumption (\ref{as4}) it follows that $L = o(n/c_{n,m})$ and therefore
$$
\frac{L-\frac{n}{1+c_{n,m}}}{\sqrt{2(n-L)}} = - \frac{\sqrt{n}}{\sqrt{2}(\log n +2 \log m)}(1+o_{n,m})\;\;.
$$
Applying Corollary \ref{Feller} with $x = \frac{\sqrt{n}}{\sqrt{2}(\log n +2 \log m)} = o(\sqrt{n-L})$  yields that for every $\epsilon>0$ and $n$ large enough (dependent on $\epsilon$) it holds
$$
P\left(\frac{\chi^2_{n-L}-(n-L)}{\sqrt{2(n-L)}} \leq
\frac{L-\frac{n}{1+c_{n,m}}}{\sqrt{2(n-L)}}\right)
\leq \exp\left(-\frac{(1-\e) n^2}{2 (\log n +2 \log m)^2}\right)\;\;.$$

 \vspace{3mm}

The number of models with $L-1$ regressors is
 ${m \choose L-1} < m^{L}$. Thus, for sufficiently large $n$
 $$\sum_{M\in \Omega_L} P\left( \frac{n\sigma^2}{RSS_M}-1\geq \frac{1}{c_{n,m}}\right)\leq m^{L} \exp\left(-\frac{(1-\e) n}{4 (\log n +2 \log m)^2}\right) = o\left(\frac{1}{\sqrt{n}{m}}\right)\;\;,$$
 which finishes the proof.
  \end{pro}


Next we compute a bound for the type II error:
 \begin{lem} \label{Lem:t2_suk}
Assume $n \rightarrow \infty$, $m=m(n) \rightarrow \infty$  and that (\ref{as1}), (\ref{as2}), (\ref{as4}), (\ref{asL}), (\ref{as3})  and Assumption (C) hold.
 Then the type II error of the decision rule based on mBIC criterion (\ref{mBIC}) is bounded by
\beq \label{t2_mBIC}
t_2 \leq  \s (\rho(0^-)+\rho(0^+)) \frac{\sqrt{\log n + 2 \log m}}{\sqrt n} (1 + o_{n,m})
\eeq
\end{lem}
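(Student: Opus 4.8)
The plan is to reduce the unknown-$\sigma$ non-detection event to the known-$\sigma$ threshold-crossing that was already handled when deriving (\ref{t_2_app}), so that the integral computation there can be reused almost verbatim. Write $\hat M$ for the minimizer of the criterion (\ref{mBIC}) over models of size $k \leq L$, and recall from the proof of Lemma \ref{Lem:t1_suk} the quantities $Z_{i,M} = \log\frac{RSS_M}{RSS_M - n\hat\beta_i^2}$ and $u_{n,m} = \frac{\log n + 2\log m + d}{n}$. For a true signal $X_i$, a type II error means $X_i\notin\hat M$; provided $\hat M$ carries fewer than $L$ regressors, optimality of $\hat M$ forces the criterion not to decrease when $X_i$ is appended, which is exactly $Z_{i,\hat M}\le u_{n,m}$. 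The first step is to record the elementary inequality $Z_{i,\hat M}\ge \frac{n\hat\beta_i^2}{RSS_{\hat M}}$ (from $-\log(1-x)\ge x$) together with the monotonicity $RSS_{\hat M}\le RSS_\emptyset$, valid because the intercept-only model has the largest residual sum of squares among all submodels. Combining these yields $\frac{n\hat\beta_i^2}{RSS_\emptyset}\le u_{n,m}$ on the non-detection event.

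The second step is to control $RSS_\emptyset$ from above. Under orthogonality $RSS_\emptyset/\s^2$ splits into the full-model residual (a $\chi^2$ with $n-m-1$ degrees of freedom) plus the $m$ coordinate statistics $n\hat\beta_j^2/\s^2$; the null coordinates give a further $\chi^2$ with about $m$ degrees of freedom, while the $mp$ true signals contribute a total of order $mp\,(\log n)$ by the verge-of-detectability scaling. Since assumption (\ref{as3}) gives $mp\sqrt{\log n/n}\to 0$ and hence $mp\log n = o(n)$, applying the tail bound of Corollary \ref{Feller} produces a sequence $\e_{n,m}\to 0$ (for instance $\e_{n,m} = (\log n)/\sqrt n$) with $P(RSS_\emptyset > n\s^2(1+\e_{n,m}))$ tending to $0$ exponentially fast. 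On the complementary event the inequality above becomes $\frac{n\hat\beta_i^2}{\s^2}\le (\log n + 2\log m + d)(1+\e_{n,m})$, which is precisely the known-$\sigma$ non-detection condition with a threshold inflated by the negligible factor $1+\e_{n,m}$.

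Putting these together, $t_2$ is bounded by $P\big(\tfrac{n\hat\beta_i^2}{\s^2}\le (\log n+2\log m+d)(1+\e_{n,m})\,\big|\,\beta_i\neq 0\big)$ plus two correction probabilities: the $RSS_\emptyset$ tail just discussed, and $P(\hat M\text{ has exactly }L\text{ regressors})$. The latter is controlled through (\ref{asL}) and the type I bound of Lemma \ref{Lem:t1_suk}: the expected number of false positives is $m\,t_1 = O(n^{-1/2})\to 0$, so with overwhelming probability $\hat M$ holds only about $mp\ll L$ variables and never reaches capacity. The leading term is then evaluated exactly as in the derivation of (\ref{t_2_app}): writing it as $\int\Psi_n(\mu)\,d\nu(\mu)$ with $\Psi_n(\mu)=\Phi(\sqrt{c_{n,m}}-\sqrt n\mu/\s)-\Phi(-\sqrt{c_{n,m}}-\sqrt n\mu/\s)$ and $c_{n,m}=(\log n+2\log m+d)(1+\e_{n,m})$, the localization near $\mu=0$ furnished by Assumption (C) gives $\s(\rho(0^-)+\rho(0^+))\sqrt{c_{n,m}/n}\,(1+o_{n,m})$. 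Because $\e_{n,m}\to 0$ and $d$ is a constant, $\sqrt{c_{n,m}}=\sqrt{\log n+2\log m}\,(1+o_{n,m})$, so this reproduces the right-hand side of (\ref{t2_mBIC}), the two correction terms being of strictly smaller order.

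The main obstacle is the second step. Since $\hat M$ is random and may be far from the true model, $RSS_{\hat M}$ cannot be pinned down directly, and a naive replacement of $RSS_{\hat M}$ by $n\s^2$ is circular: $RSS_{\hat M}$ is inflated precisely by the signals $\hat M$ fails to capture, which is the very event we are bounding. The device that breaks this circularity is the crude but \emph{uniform} domination $RSS_{\hat M}\le RSS_\emptyset$ together with the sparsity-driven fact that the total signal energy $\sum_{\text{true}} n\hat\beta_j^2$ is $o(n\s^2)$, so that even the worst-case residual $RSS_\emptyset$ concentrates at $n\s^2$. The delicate quantitative point is to check that $\e_{n,m}$ can be chosen to vanish fast enough to preserve the exact constant $\s(\rho(0^-)+\rho(0^+))$ in (\ref{t2_mBIC}) while still being large enough to make the $RSS_\emptyset$ tail negligible relative to the main term of order $\sqrt{\log n/n}$.
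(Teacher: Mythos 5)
There is a genuine gap, and it sits exactly where you flagged the ``delicate quantitative point'': the claim that the total signal energy $\sum_{\mathrm{true}\ j} n\hat\beta_j^2$ is $o(n\sigma^2)$, so that $RSS_\emptyset$ concentrates at $n\sigma^2$. That claim is false in this paper's setting. Here the alternative distribution $\nu$ is \emph{fixed} (it does not shrink with $n$ or $m$); the ``verge of detectability'' arises from $n\propto\log m$, not from a vanishing effect-size scale. Hence each true signal contributes $n\hat\beta_j^2\approx n\mu_j^2$ of order $n$, and with $K\approx mp$ true signals one gets $RSS_\emptyset\approx n\bigl(\sigma^2+\sum_{\mathrm{true}}\mu_j^2\bigr)$, whose signal part has mean of order $n\,mp\,E_\nu[\mu^2]$. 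Assumption (\ref{as3}) forces $mp\to s\in(0,\infty]$, so this is \emph{not} $o(n\sigma^2)$: even for finite $s$ the ratio $RSS_\emptyset/(n\sigma^2)$ converges to a nondegenerate limit exceeding $1$ with positive probability, and for $mp\to\infty$ it diverges. Your chain $n\hat\beta_i^2\le u_{n,m}RSS_{\hat M}\le u_{n,m}RSS_\emptyset$ therefore yields a threshold inflated by a factor $1+\sum\mu_j^2/\sigma^2$ that is not $1+o(1)$, and the resulting bound on $t_2$ carries an extra factor $\bigl(1+\sum\mu_j^2/\sigma^2\bigr)^{1/2}$. Since (\ref{t2_mBIC}) must reproduce the exact constant $\sigma(\rho(0^-)+\rho(0^+))$ of the Bayes oracle risk for the ABOS conclusion of Theorem \ref{TH:mBIC_unknown_sigma}, losing the constant is fatal, not cosmetic.

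The circularity you correctly identified cannot be broken by the uniform domination $RSS_{\hat M}\le RSS_\emptyset$, because $RSS_\emptyset$ contains the energy of \emph{all} signals, including the large detected ones. The paper's device is finer: condition on the rank $r$ of $\hat\beta_i$ among the ordered squared estimates of the true signals, and observe that on the relevant event the selected model already contains every true signal with a \emph{larger} estimated coefficient, so the residual sum of squares of that model is $\sigma^2\chi^2_{n-k-1}+\sum_{j=1}^{r}n\hat\beta_{(j)}^2$ with each undetected $\hat\beta_{(j)}^2\le\hat\beta_{(r)}^2$. The signal contamination is thus at most $r\,n\hat\beta_{(r)}^2$ with $r\,u_{n,m}\to 0$ (using $r\le L'=\lfloor mp(\log n)^{1+\eta}\rfloor$ and (\ref{as4})), and the inequality can be rearranged to isolate $n\hat\beta_{(r)}^2\le\sigma^2\chi^2_{n-k-1}u_{n,m}(1+o_{n,m})$. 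One then pays for working with the order statistic $\hat\beta_{(r)}$ rather than the marginal $\hat\beta_i$ by a binomial-tail computation over $r$ and $k$ (controlled via factorial moments of $K$), which collapses back to the marginal probability $q$ of (\ref{pa}) up to $1+o(q)$. This rank decomposition is the missing idea in your argument; without it, or some equally sharp control of the undetected-signal energy, the approach does not deliver (\ref{t2_mBIC}).
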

\begin{pro}
Let $1 \leq i\leq m$ and suppose $\tilde B_i$ is the event that $\beta_i \neq 0$ and let $\tilde A_i$ be the event that the corresponding regressor is not detected. Then we have type II error $t_{2i} = P(\tilde A_i|\tilde B_i)$, and by exchangeability it follows that $t_{2i} = t_2$ is independent of $i$, for each $ 1 \leq i \leq m$.

Let us introduce the symbol $D$ to denote the event that  none of the $X_j$'s corresponding to the null hypothesis are included in the model chosen by mBIC. Similar to the proof of Lemma \ref{Lem:t1_suk} one can show  that for every $i\neq j$, $P(A_j|B_j,\tilde B_i) = O\left(\frac{1}{\sqrt{n}{m}}\right)$. Then $P(D^c|\tilde B_i) = O(\frac{1}{\sqrt n})$ and thus 
$$
t_2 = P(\tilde A_i \cap D|\tilde B_i) + O(n^{-1/2}) \;.
$$
 To shorten the notation we now define $\tilde A^D_i = \tilde A_i \cap D$.

Note that
 $$
 t_{2}=\sum_{k=1}^{m} P(\tilde A^D_i  |K=k,\tilde B_i) P(K=k|\tilde B_i)\;\;,
 $$
 where $K$ is the number of nonzero $\beta$'s among $\beta_1, \ldots \beta_m$.
Under assumption (\ref{as2}),  given $\tilde B_i$, $K-1$ has a binomial distribution $B(m-1,p_m)$. Define $L^{\prime} = \left \lfloor mp_m (\log n)^{1+\eta}\right \rfloor$, where $\lfloor z\rfloor$ denotes the largest integer less than or equal to $z$. Using the assumption (\ref{as3}) and Bennett's inequality, it is easy to show that
\begin{equation}\label{benett}
P(K > L^{\prime}|\tilde B_i) = o(n^{-1/2}).
\end{equation}
 Thus
 \begin{equation}\label{delta}
 t_2 \leq \sum_{k=1}^{L^{\prime}} P(\tilde A^D_i  |K=k,\tilde B_i) P(K=k|\tilde B_i) + O(n^{-1/2}) \;\;.
 \end{equation}
Note that here we made use of assumption (\ref{asL}).

 Given $K=k$, let the ordered values of the squares of the estimates of the regression coefficients corresponding to the true regressors among $X_1, \ldots, X_m$ be denoted by $\hat \beta_{(1)}^2\leq \hat \beta_{(2)}^2\leq \ldots \leq \hat \beta_{(k)}^2$.
Clearly
$$
P(\tilde A^D_i  |K=k,\tilde B_i)=\sum_{r=1}^{k} P(\tilde A^D_i  | \hat \beta_i=\hat \beta_{(r)}, K=k,\tilde B_i) P(\hat \beta_i=\hat \beta_{(r)}| K=k,\tilde B_i)\;\;,
$$
and using the fact that $\hat{\beta_i}$'s corresponding to true signals are i.i.d. continuous random variables, the above equation can be rewritten as
$$
P(\tilde A^D_i  |K=k,\tilde B_i)=\frac{1}{k}\sum_{r=1}^{k} P(\tilde A^D_{(r)}  | K=k)\;\;,
$$
where $\tilde A^D_{(r)}$ is the generic event that neither the regressor corresponding to $\hat \beta_{(r)}$ nor any false positives are  detected by mBIC. 
Note that the events $\tilde A^D_{(r)}$'s are nested, i.e, $\tilde A^D_{(r+1)} \subset \tilde A^D_{(r)}$, and thus
\beq \label{nested}
\frac{1}{k}\sum_{r=1}^{k} P(\tilde A^D_{(r)}  | K=k) =
\sum_{r=1}^{k} \frac{r}{k}   P(\tilde A^D_{(r)} \cap (\tilde A^D_{(r+1)})^c    | K=k)  \;\;,
\eeq
where we define $\{\tilde A^D_{(k+1)}| K=k\} = \emptyset$. Thus we can write
$$
t_2 \leq \sum \limits_{k=1}^{L^{\prime}} \sum \limits_{r=1}^k \frac{r}{k} P(\tilde A^D_{(r)} \cap (\tilde A^D_{(r+1)})^c    | K=k) P(K=k|\tilde B_i) + O(n^{-1/2}).
$$
The event $\tilde A^D_{(r)} \cap \tilde (A^D_{(r+1)})^c$ implies that the model chosen by mBIC includes the $(k-r)$  true regressors having the largest absolute values of estimated regression coefficients, denoted by $X_{(r+1)},\dots, X_{(k)}$, while $X_{(j)}$ for $1 \leq j \leq r$ are not included. This event also corresponds to the situation when no false positives are included. Hence the model includes $k-r < L^{\prime} \leq L $ regressors, and in any case we have not yet exhausted our maximum model size. So, since $X_{(r)}$ is not included in the model we can infer that mBIC criterion is getting larger by adding $X_{(r)}$. Denoting by $RSS_{k-r}$  the residual sum of squares of the model including $X_{(r+1)},\dots, X_{(k)}$ (or only the intercept in case $r=k$), we have
$$
P(\tilde A^D_{(r)} \cap (\tilde A^D_{(r+1)})^c  | K=k) \leq  P\left(\log\left(\frac{RSS_{k-r}}{RSS_{k-r} - n\hat \beta_{(r)}^2} \right)\leq u_{n,m} | K=k\right)\;\;.
$$
Since for $x\in(0,1)$, $\log(1/(1-x))\geq x$,
\beq \label{A_est}
P(\tilde A^D_{(r)} \cap (\tilde A^D_{(r+1)})^c  | K=k) \leq
 P\left(\frac{n\hat \beta_{(r)}^2}{RSS_{k-r}}\leq u_{n,m}|K=k\right) \;\; .
\eeq
Under $K=k$, $RSS_{k-r}$ is the sum of two independent random variables, the first being a $\sigma^2 \chi^2_{n-k-1}$ random variable ($\chi^2_{n-k-1}$ being a central chi-square with $(n-k-1)$ degrees of freedom), while the second is $\sum\limits_{j=1}^r n \hat \beta_{(j)}^2$. Therefore
$$
P\left(\frac{n\hat \beta_{(r)}^2}{RSS_{k-r}}\leq u_{n,m}|K=k\right) = P\left(\frac{n\hat \beta_{(r)}^2}{\sigma^2 \chi^2_{(n-k-1)} +\sum\limits_{j=1}^r n \hat \beta_{(j)}^2} \leq u_{n,m}\right) \;,
$$
and because of $\hat \beta_{(r)}^2 > \hat \beta_{(j)}^2 $, for $r > j$ one obtains
$$
P\left(\frac{n\hat \beta_{(r)}^2}{RSS_{k-r}}\leq u_{n,m}|K=k\right) \leq P(n\hat \beta_{(r)}^2\leq \sigma^2 \chi^2_{n-k-1} u_{n,m}(1+o_{n,m}))\;\;,
$$
where $(1+o_{n,m})=\frac{1}{1-r u_{n,m}}$. (Note that $r \leq k \leq L^{\prime} = mp_m (\log n)^{1+\eta} $, and under assumption \ref{as4} $r u_{n,m} \rightarrow 0$ as $n \rightarrow \infty$).

Define $b_n = \frac{\log(\log n)}{4 \log n}$. Then
$$
\left(n-k-1+\sqrt{2}(n-k-1)^{1 - b_n}\right)/n = 1+o_{n,m},
$$ 
and therefore
\begin{eqnarray*}
P(n\hat \beta_{(r)}^2\leq \sigma^2 \chi^2_{n-k-1} u_{n,m}(1+o_{n,m})) \; \leq  \;
P(\hat \beta_{(r)}^2\leq \sigma^2 u_{n,m}(1+o_{n,m})) \\ +
 P\left(\chi^2_{n-k-1}>n-k-1+\sqrt{2}(n-k-1)^{1 - b_n}\right)\;\;.
\end{eqnarray*}
A simple application of Chebyshev's inequality yields 
$$
P\left(\chi^2_{n-k-1}>n-k-1+\sqrt{2}(n-k-1)^{1 - b_n}\right)\leq (n-k-1)^{-1 + 2 b_n} \;.
$$
Now, observe that
\begin{eqnarray*}
&\sum\limits_{k=1}^{L^{\prime}} \sum\limits_{r=1}^k   \frac{r}{k}
\; P\left(\chi^2_{n-k-1}>n-k-1+\sqrt{2}(n-k-1)^{1 - b_n}\right)
\; P(K=k |B_i) & \\
&\leq  
(n-k-1)^{-1 + 2 b_n} E((K+1)/2)  \leq  mp\ (n-k-1)^{-1 + 2 b_n} = o(n^{-1/2})\;\;,&
\end{eqnarray*}
where the last equality follows after some calculations from (\ref{as3}).
Therefore
\beq \label{t2:bound}
t_{2} \leq \sum_{k=1}^{L^{\prime}} \sum_{r=1}^k   \frac{r}{k}
\; P(\hat \beta_{(r)}^2\leq \sigma^2 u_{n,m}(1+o_{n,m}))
\; P(K=k |\tilde B_i) + O(n^{-1/2}) \;\;.
\eeq

Let us define
\beq \label{pa}
q=q_{n,m} := P\left(\hat \beta_{i}^2 \leq \sigma^2 u_{n,m}(1+o_{n,m})\right) \;\;.
\eeq
Given that $\hat\beta_{i} \sim \nu * \NN(0,\s^2/n)$  straight forward computations yield
$$
q  = \int\limits_{\mu \in \R} \left[\Phi\left(\sqrt{n u_{n,m}}(1+o_{n,m}) - \frac{\sqrt n\mu}{\s}  \right) -
\Phi\left(-\sqrt{n u_{n,m}}(1+o_{n,m}) - \frac{\sqrt n \mu}{\s}  \right)
\right] \ d\nu(\mu).
$$
The asymptotic behavior of this integral is obtained by similar analysis like that leading to (\ref{t2_general}),
resulting in
$$
q  = \s (\rho(0^-)+\rho(0^+)) \frac{\sqrt{\log n + 2 \log m}}{\sqrt n} (1 + o_{n,m}).
$$
Define the contribution for fixed $r$ in the sum on the right hand side of (\ref{t2:bound}) as
$$
\Psi_r := \sum_{k=r}^{L^{\prime}} \frac{r}{k}  P(\hat \beta_{(r)}^2\leq \sigma^2 u_{n,m}(1+o_{n,m})) \ P(K=k|\tilde B_i).
$$
For $r = 1$ we observe that
$$
P(\hat \beta_{(1)}^2\leq \sigma^2 u_{n,m}(1+o_{n,m})) = 1 - (1-q)^k = kq - \sum\limits_{j=2}^{k} {k \choose j} (-q)^j \;\;,
$$
with the convention that ${1 \choose 2}$=0.

Thus
\bea
\Psi_1&\leq& q+ 
\sum_{k=2}^{L^{\prime}} \frac{1}{k} \sum\limits_{j=2}^{k} {k \choose j} q^j P(K=k|\tilde B_i)\\
 & = &q+
\sum\limits_{j=2}^{L^{\prime}} \frac{q^j}{j}  \sum_{k=j}^{L^{\prime}} {k -1 \choose j -1}  P(K=k|\tilde B_i)
\\ & \leq &
q+\sum\limits_{j=2}^{L^{\prime}}   \frac{q^j}{j \ (j-1)!} (mp)^{j-1}\\
&\leq& q + q^2 mp e^{mp q} = q(1+o(q))\;\;.
\eea
as long as $mp q  \rightarrow 0$ which is guaranteed by (\ref{as3}).
The first inequality follows from the fact that under $\tilde B_i$, $K \sim \mbox{Bin}(m-1, p)$ and thus
\beq \label{frac_moment}
E((K-1) (K-2) \cdots (K-j+1))
=  (m-1)(m-2) \dots (m - j + 1) p^{j-1}
\leq (mp)^{j-1}.
\eeq

Finally we have to bound the contribution $\Psi_r$ in the sum on the the right hand side of (\ref{t2:bound}) stemming from $r>1$. Note that
$$
P(\hat \beta_{(r)}^2\leq \sigma^2 u_{n,m}(1+o_{n,m})) = \sum_{j=r}^k {k \choose j}q^j (1-q)^{k-j} \;\;.
$$
Similar computations as above using (\ref{frac_moment}) yield
$$
\Psi_r = \sum_{k=r}^{L^{\prime}} \frac{r}{k} \sum\limits_{j=r}^{k} {k \choose j} q^j (1-q)^{k-j} P(K=k|\tilde B_i)
\leq
\sum\limits_{j=r}^{L^{\prime}}   \frac{q^j}{ (j-1)!} s^{j-1}
\leq q^r s^{r-1}  e^{q s}.
$$
Summing over all possible values of $r > 1$ finally gives
$$
\sum\limits_{r=2}^{L^{\prime}} \Psi_r \leq \frac{q^2 s}{1 - q s}  e^{q s} = o(q).
$$
Thus we have  shown that
$$
t_{2} \leq \sum\limits_{r=1}^{L^{\prime}} \Psi_r + O(n^{-1/2}) \leq q(1+o_{n,m}),
$$
since $O(n^{-1/2}) = o(q)$.
This completes the proof of the lemma.
\end{pro}

{\bf Proof of Theorem 3.3} \ \\

\begin{pro}
First note that the assumption on $mp$ in (\ref{as_spars0}) is stronger than that in assumption (\ref{as3}). 
Given (\ref{as_spars0}) it is easy to see that the type II error estimate in Lemma \ref{Lem:t2_suk} is asymptotically of the same form as the type II error of the Bayes oracle for $C = 0$ in (\ref{t2_general}). To show ABOS it is therefore sufficient that the risk component of the type I error is of smaller order than the Bayes risk. From Lemma \ref{Lem:t1_suk} we conclude 
$$
\frac{R_1}{R_{BO}} = O\left(\frac{\delta}{m p \sqrt{\log v}}\right) \;.
$$
Under Assumption (B) $\delta$ is bounded from above and ABOS follows.

Consistency follows exactly the same way as in Theorem \ref{TH3}.
\end{pro}

\newpage

\subsection{Figures of the first part of the simulation study} \label{Sec:ApFig} \ \\
\nopagebreak[4]

\begin{figure}[h!]
\caption{Simulation runs for known $\sigma$. Misclassification probability (MP), False Discovery Rate (FDR) and Power for different selection rules  and sparsity parameter $p$ at values of $p\in\{0.001, 0.005, 0.01, 0.02, 0.05, 0.1, 0.2\}$.} \label{Fig3}
\vspace{9mm}
$\begin{array}{c@{\hspace{-5mm}}c@{\hspace{0mm}}c} 
\multicolumn{1}{l}{\mbox{(a) MP }, n=m=256}
&  \multicolumn{1}{l}{\mbox{(b) MP }, n=m=1024} & \\
\epsfig{file=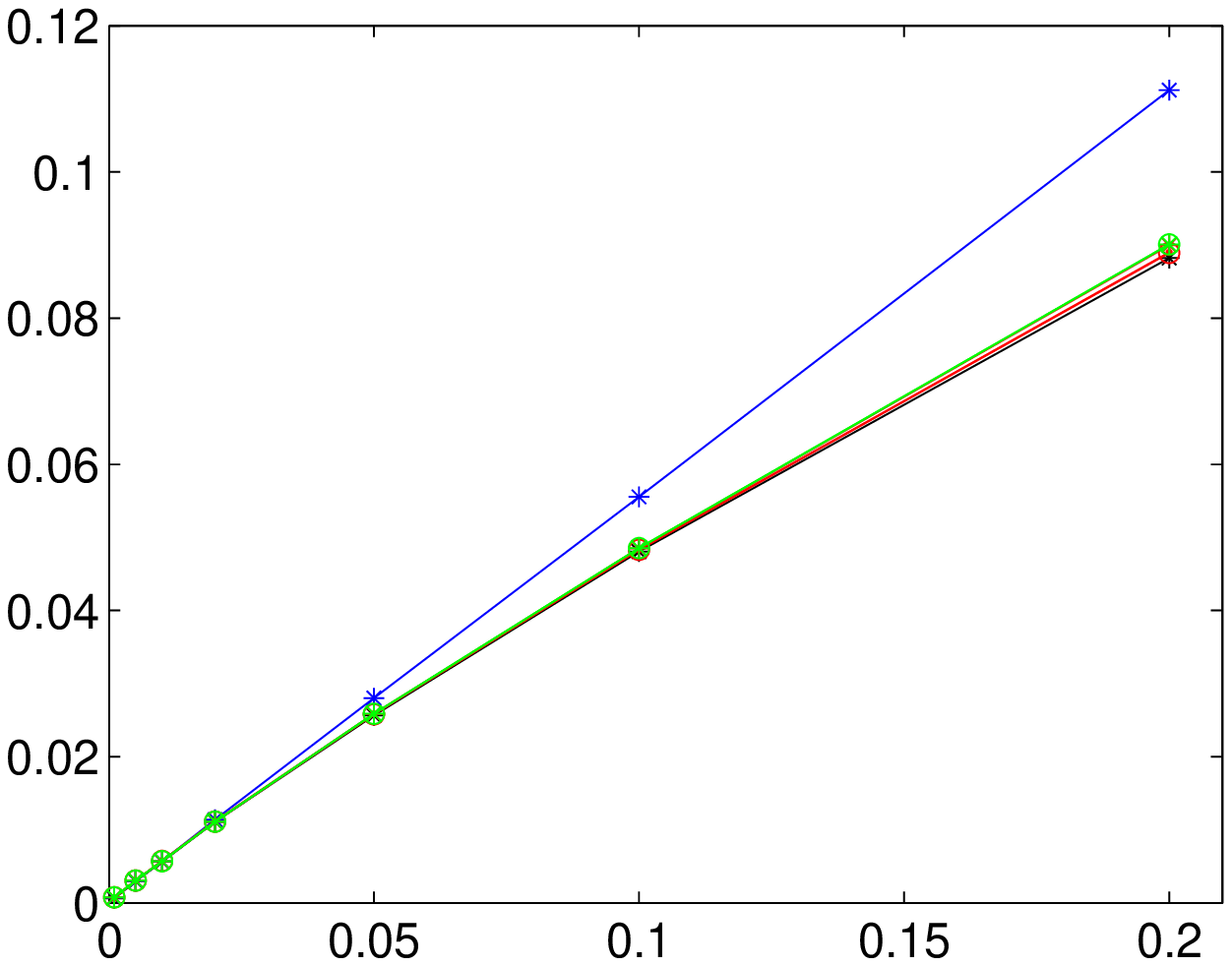,width=7.0cm} &
       \epsfig{file=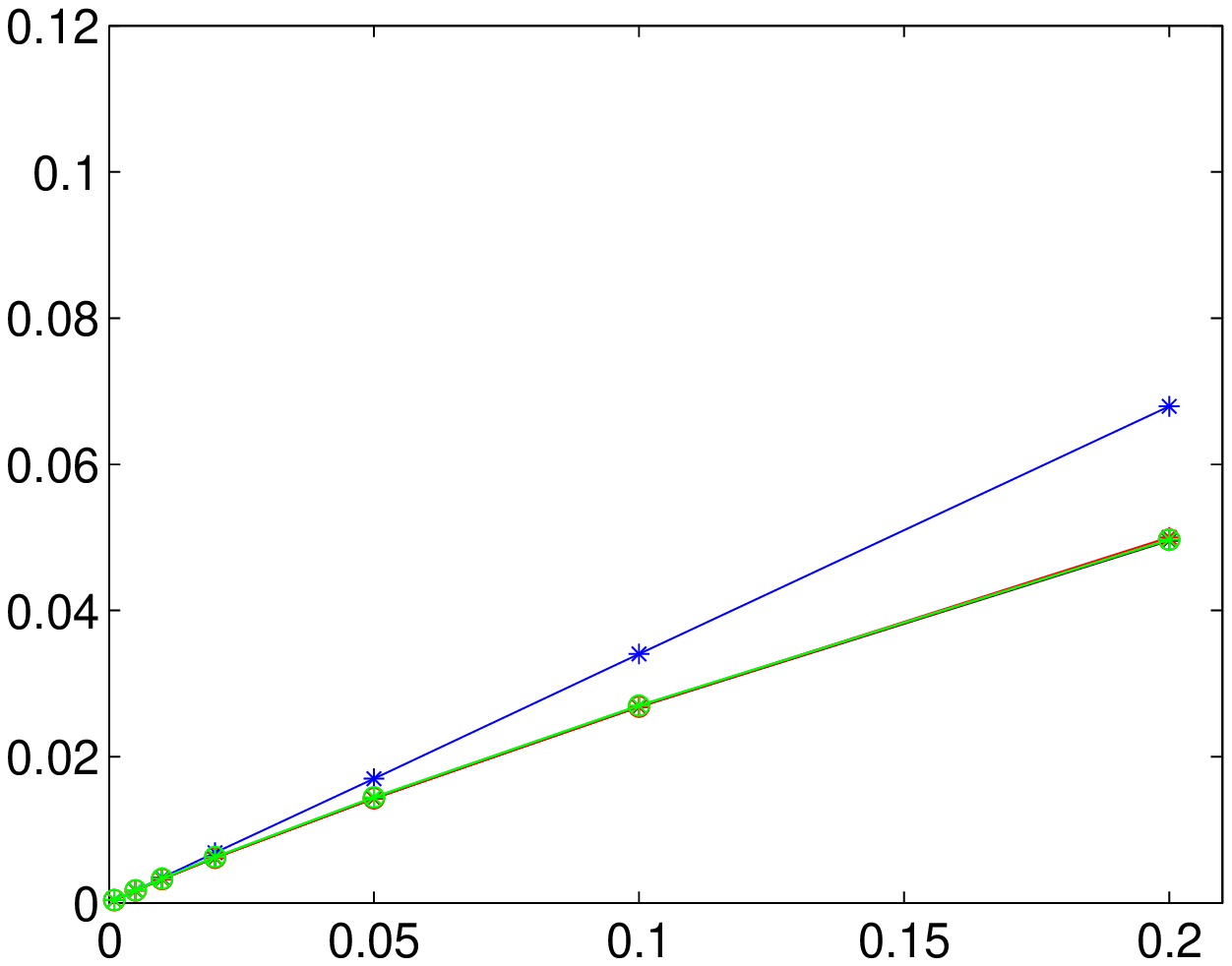,width=7.0cm} &
\\
\multicolumn{1}{l}{\mbox{(c) FDR }, n=m=256}
&  \multicolumn{1}{l}{\mbox{(d) FDR }, n=m=1024} & \\
\epsfig{file=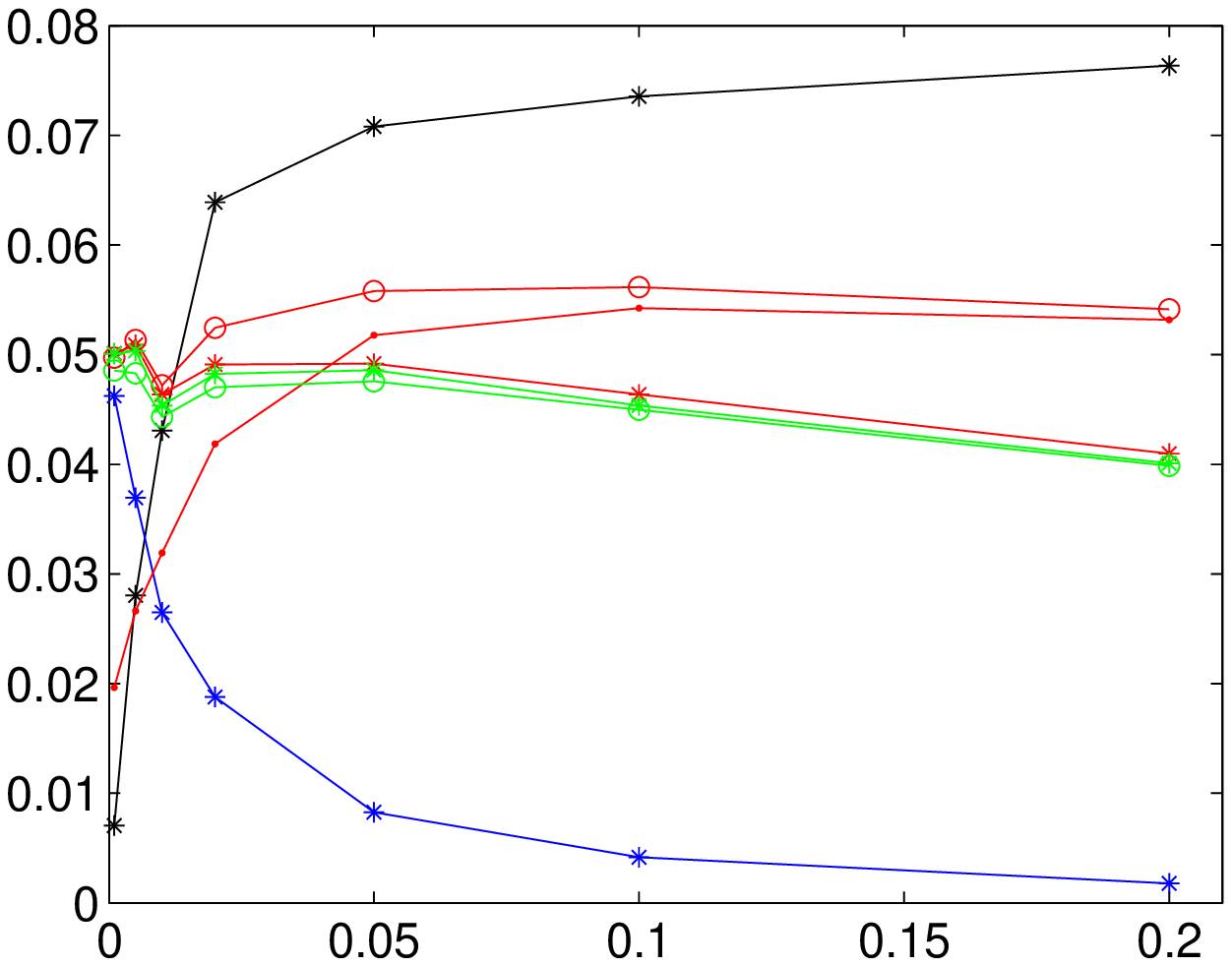,width=7.0cm} &
       \epsfig{file=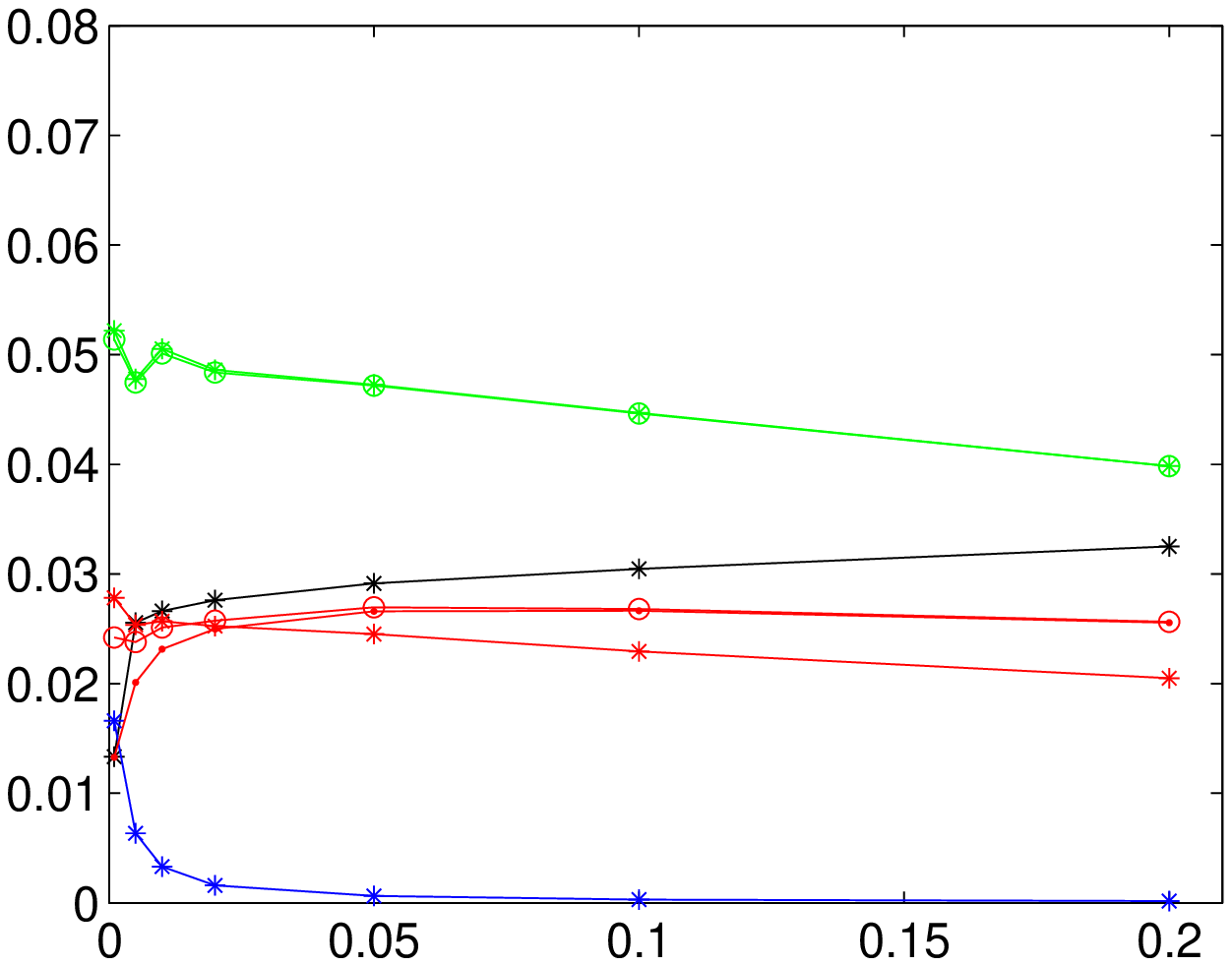,width=7.0cm} &
\\
\multicolumn{1}{l}{\mbox{(e) Power }, n=m=256}
&  \multicolumn{1}{l}{\mbox{(f) Power }, n=m=1024} & \\
 \epsfig{file=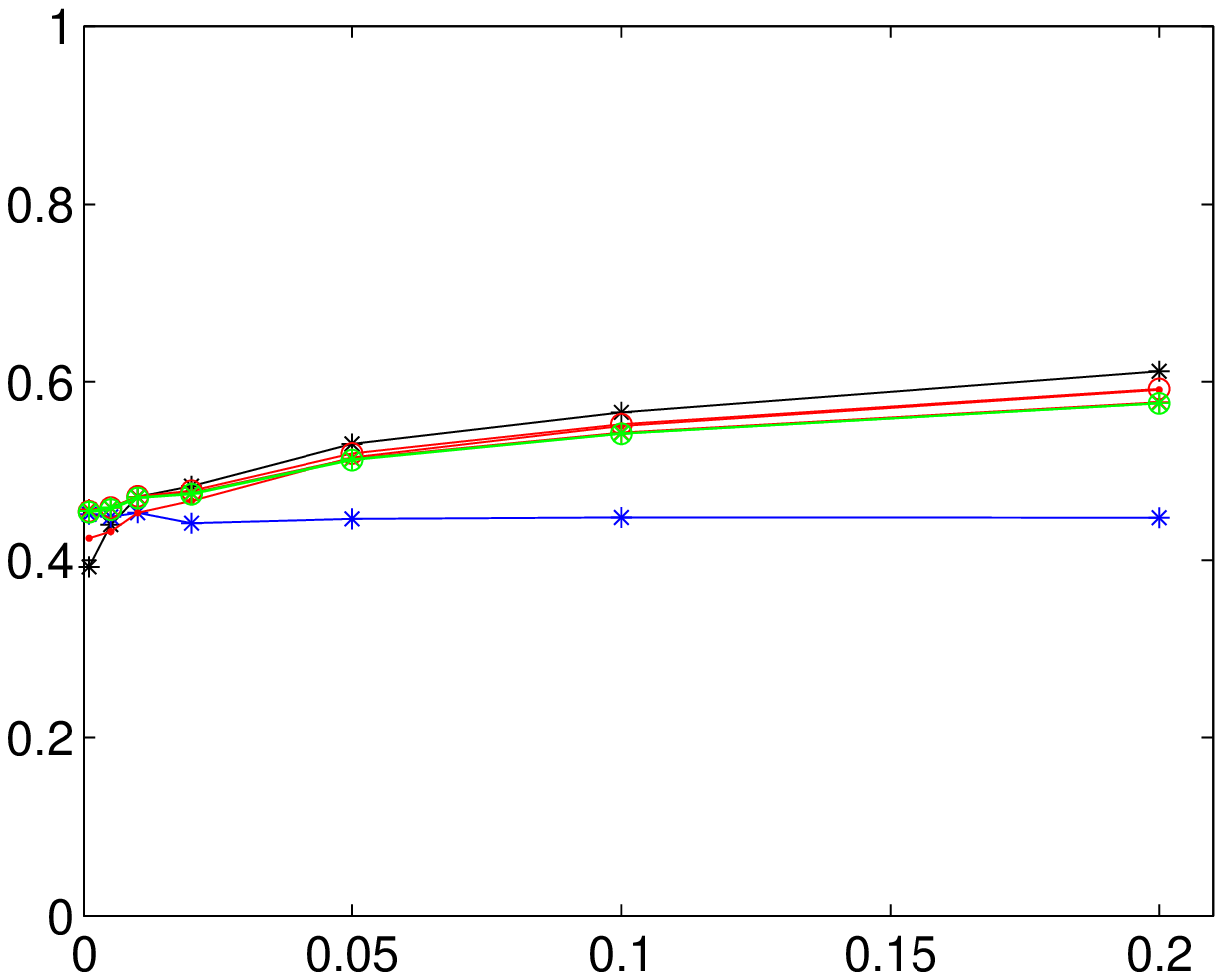,width=7.0cm} &
       \epsfig{file=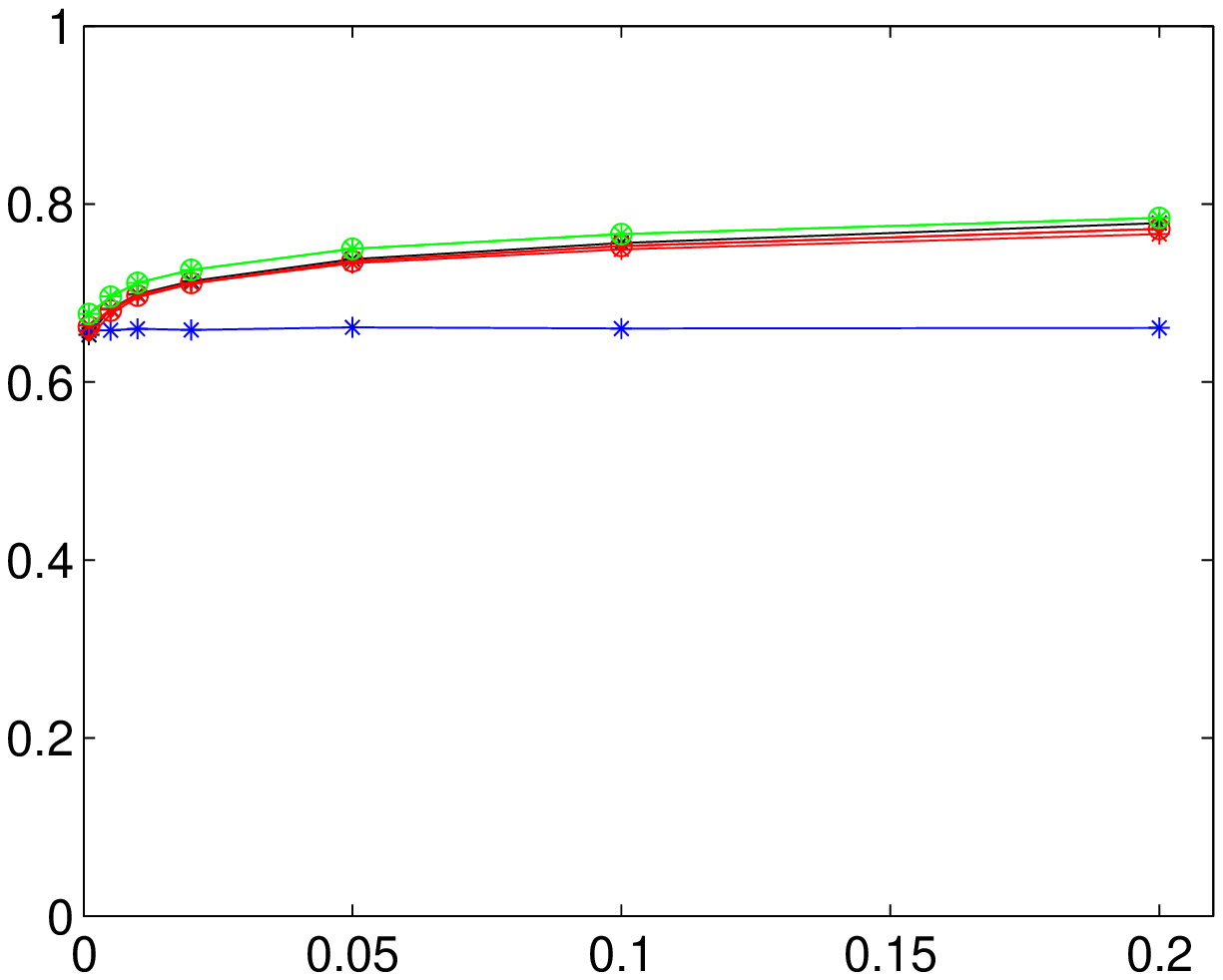,width=7.0cm} \hspace{-5mm} 
       &\epsfig{file=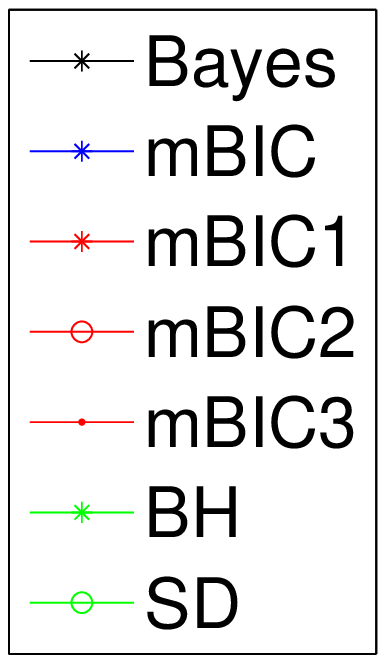,width=7.0cm}
\end{array}$
\end{figure}

\newpage

\begin{figure} [t!]   
\caption{Simulation runs for unknown $\sigma$. Misclassification probability (MP), False Discovery Rate (FDR) and Power for different selection rules and sparsity parameter $p$ at values of $p\in\{0.001, 0.005, 0.01, 0.02, 0.05, 0.1, 0.2\}$.} \label{Fig4}
\vspace{9mm}
$\begin{array}{c@{\hspace{-5mm}}c@{\hspace{0mm}}c} 
\multicolumn{1}{l}{\mbox{(a) MP }, n=m=256}
&  \multicolumn{1}{l}{\mbox{(b) MP }, n=m=1024} & \\
\epsfig{file=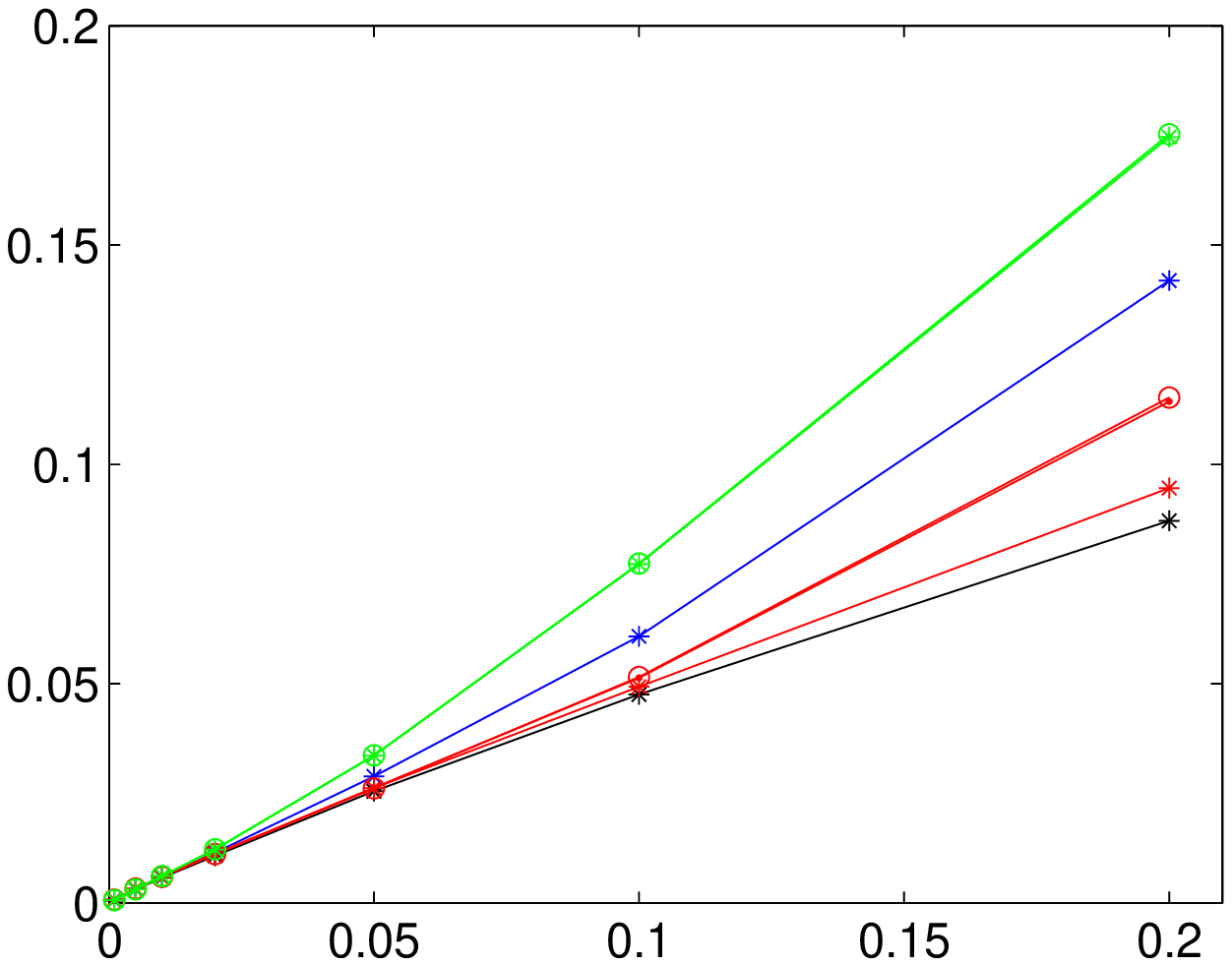,width=7.0cm} &
       \epsfig{file=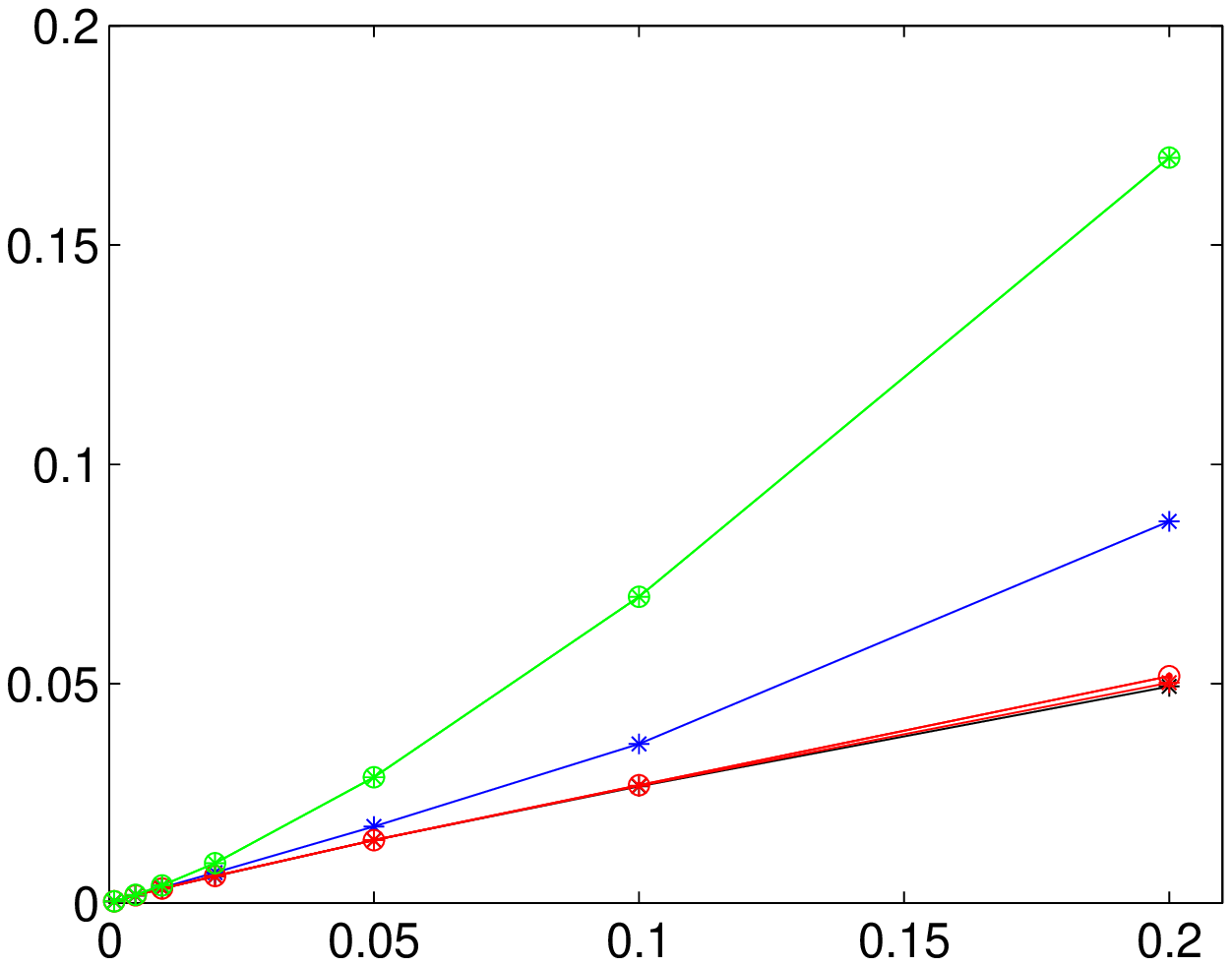,width=7.0cm} &
\\
\multicolumn{1}{l}{\mbox{(c) FDR }, n=m=256}
&  \multicolumn{1}{l}{\mbox{(d) FDR }, n=m=1024} & \\
\epsfig{file=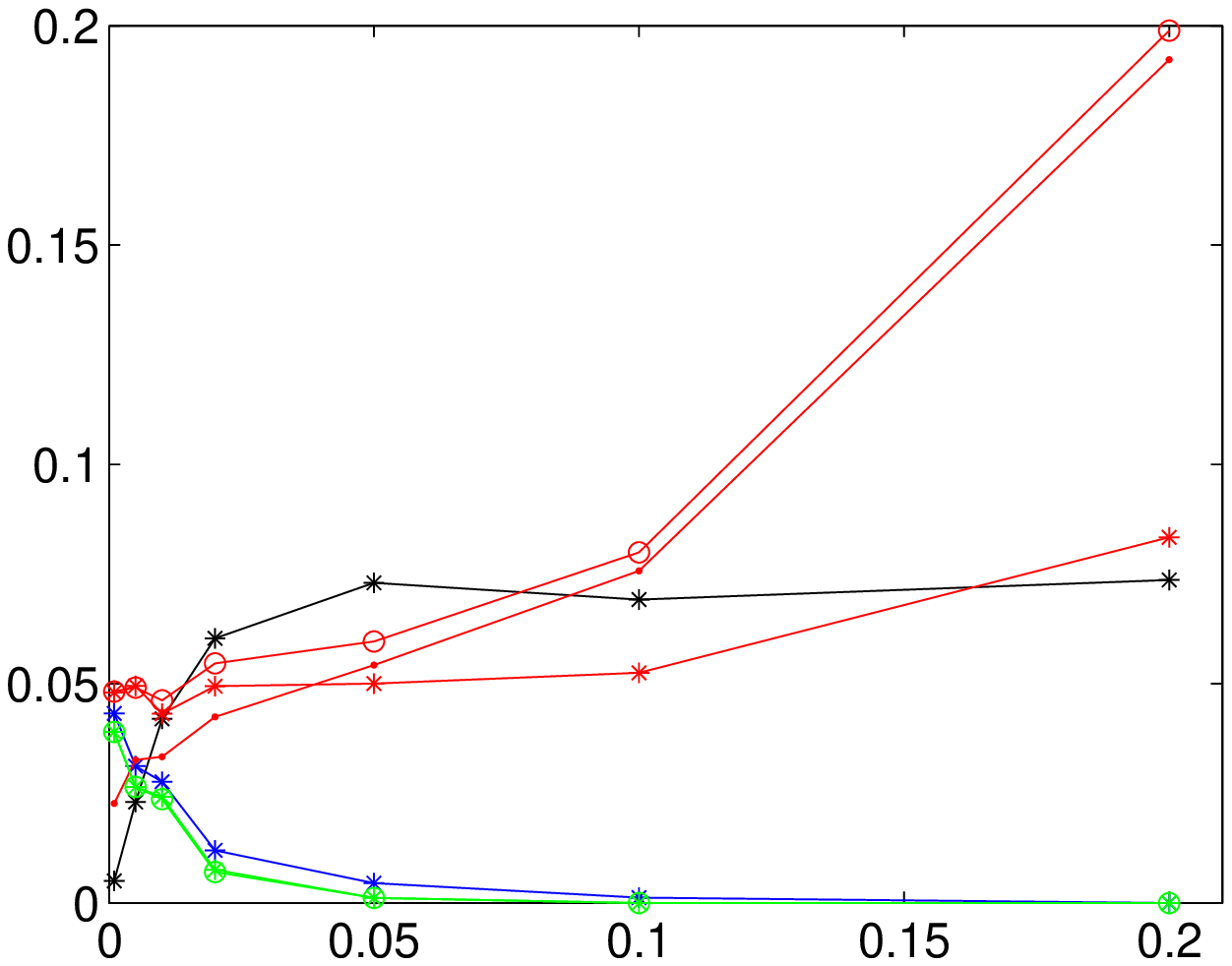,width=7.0cm} &
       \epsfig{file=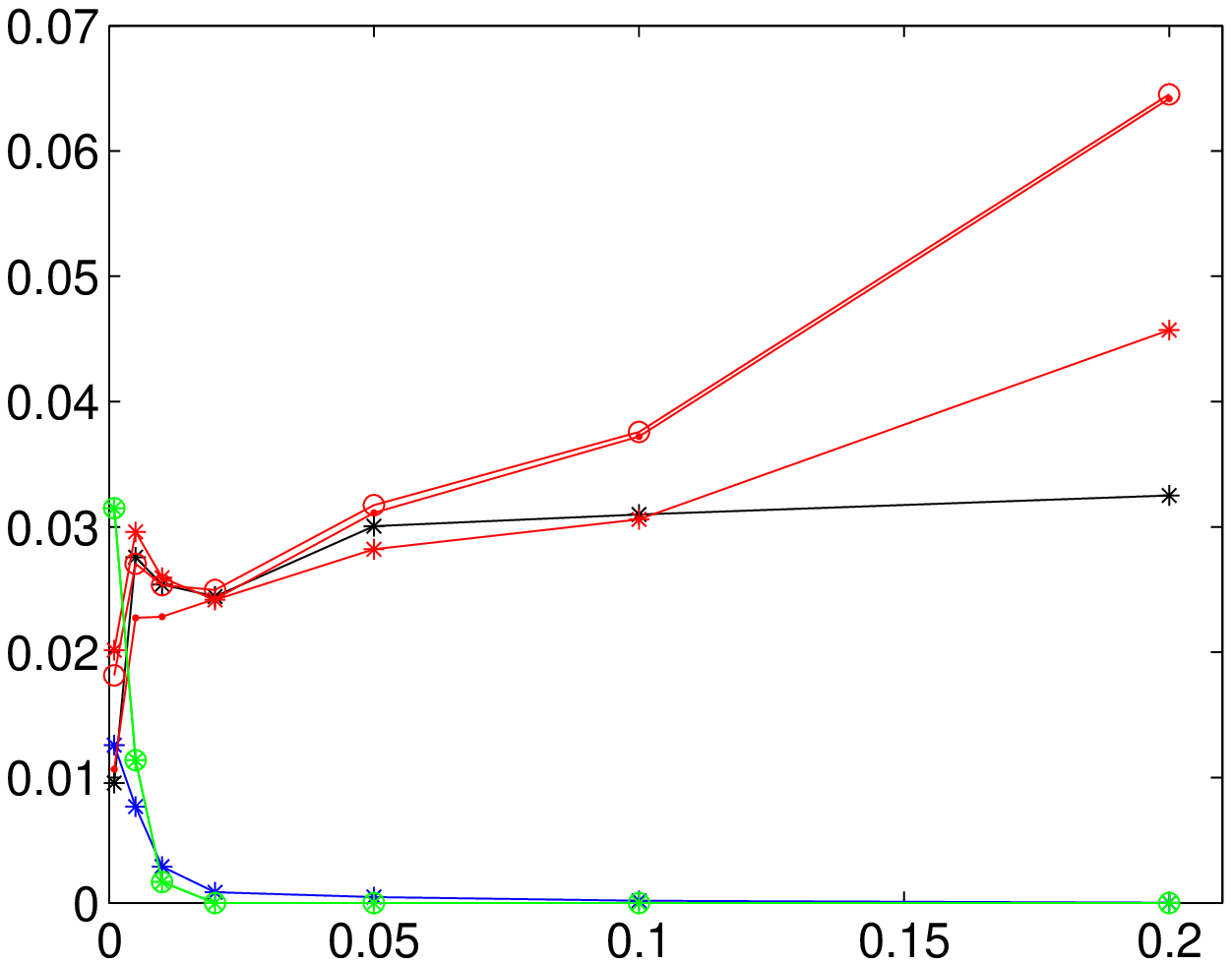,width=7.0cm} &
 \\
\multicolumn{1}{l}{\mbox{(e) Power }, n=m=256}
&  \multicolumn{1}{l}{\mbox{(f) Power }, n=m=1024} & \\
 \epsfig{file=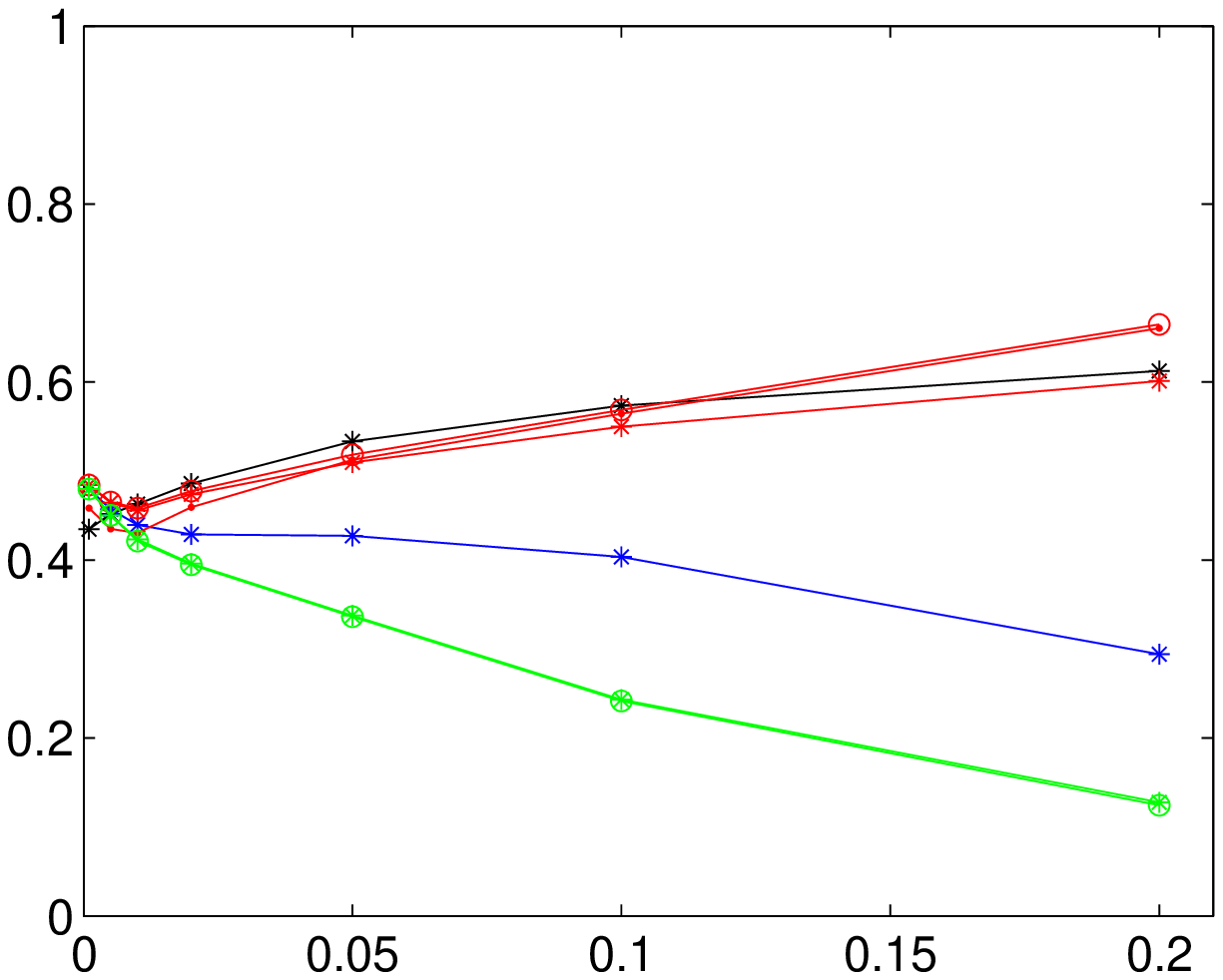,width=7.0cm} &
       \epsfig{file=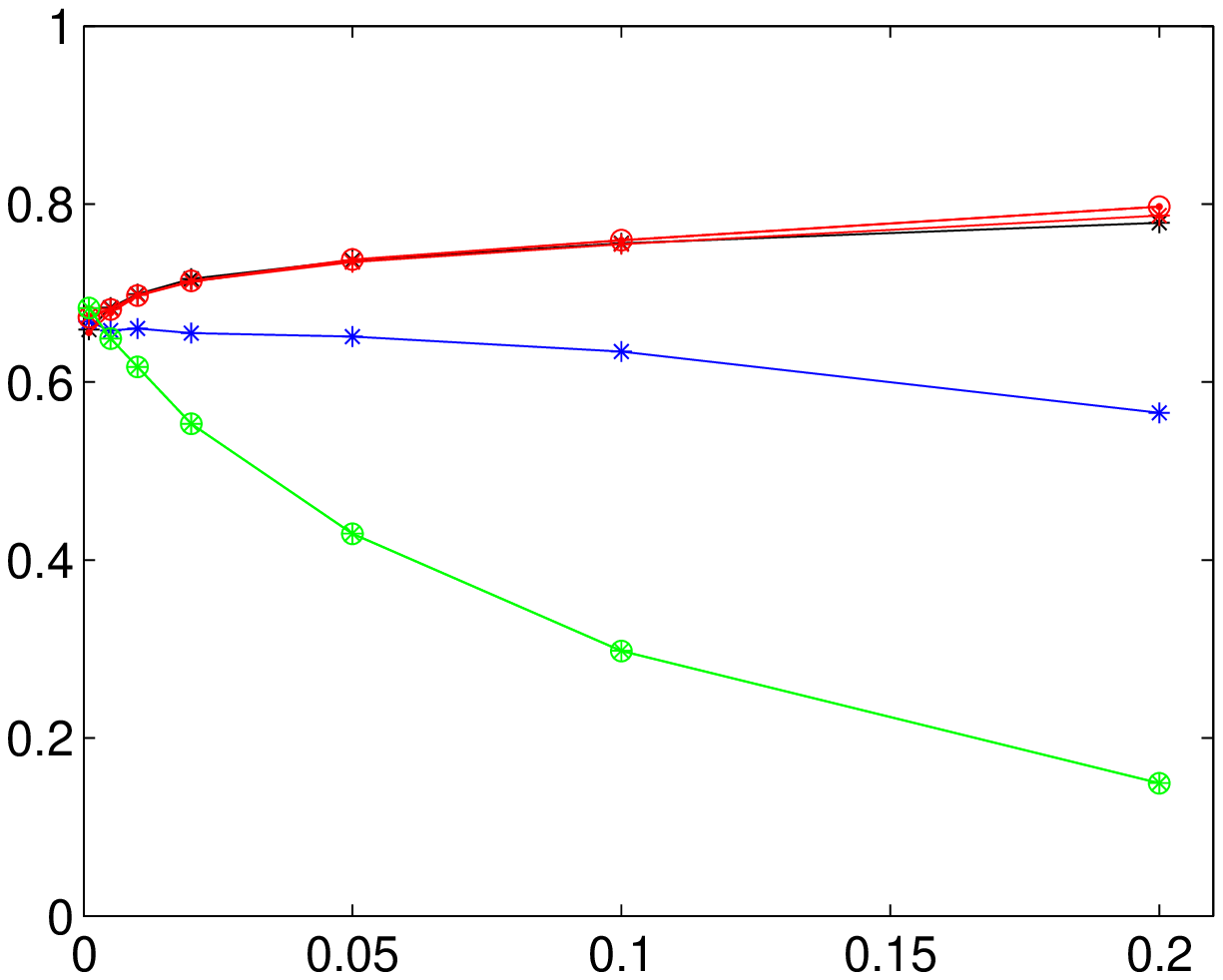,width=7.0cm} \hspace{-5mm} 
       &\epsfig{file=legend.eps,width=7.0cm}
\end{array}$
\end{figure}

\newpage

\bibliographystyle{plain}

\end{document}